\newtheorem{theorem}{Theorem}[section]
\newtheorem*{theorem*}{Theorem}
\newtheorem{lemma}[theorem]{Lemma}
\newtheorem{corollary}[theorem]{Corollary}
\newtheorem{proposition}[theorem]{Proposition}
\theoremstyle{definition}
\newtheorem{example}[theorem]{Example}
\newtheorem{definition}[theorem]{Definition}
\newtheorem{remark}[theorem]{Remark}
\crefname{theorem}{theorem}{theorems}
\crefname{fact}{fact}{facts}
\crefname{lemma}{lemma}{lemmas}
\crefname{corollary}{corollary}{corollarys}
\crefname{claim}{claim}{claims}
\crefname{proposition}{proposition}{propositions}
\crefname{example}{example}{examples}
\crefname{definition}{definition}{definitions}
\crefname{remark}{remark}{remarks}
\newcommand{\mc}[1]{\mathcal{#1}}
\newcommand{\mb}[1]{\mathbf{#1}}
\newcommand{\mbb}[1]{\mathbb{#1}}
\newcommand{\ms}[1]{\mathsf{#1}}
\newcommand{\mr}[1]{\mathrm{#1}}
\newcommand{\pa}{\ms{PA}}
\newcommand{\N}{\mbb N}
\newcommand{\T}{\mbb T}
\newcommand{\buq}[2]{\forall #1\!\!<\!\!#2\,}
\newcommand{\beq}[2]{\exists #1\!\!<\!\!#2\,}
\newcommand{\muq}[2]{\mu #1\!\!<\!\!#2\,}
\newcommand{\rec}{\ms{rec}}
\newcommand{\rem}{\ms{rem}}
\newcommand{\PriM}{\mb{PriM}}
\newcommand{\pri}{\ms{pair}}
\newcommand{\fst}{\ms{fst}}
\newcommand{\snd}{\ms{snd}}
\newcommand{\lh}{\ms{lh}}
\newcommand{\nt}{\Rightarrow}
\newcommand{\Set}{\mb{Set}}
\newcommand{\set}[1]{\left\{#1\right\}}
\newcommand{\cv}{\downarrow}
\newcommand{\sub}{\operatorname{Sub}}
\newcommand{\inj}{\rightarrowtail}
\newcommand{\surj}{\twoheadrightarrow}
\newcommand{\id}{\operatorname{id}}
\newcommand{\pair}[1]{\langle #1 \rangle}
\newcommand{\eff}{\Leftrightarrow}
\newcommand{\ov}[1]{\overline{#1}}
\newcommand{\qsi}[1]{\widetilde{#1}}
\newcommand{\pf}[1]{\widehat{#1}}
\newcommand{\inv}{^{-1}}
\newcommand{\ev}{\mathrm{ev}}
\newcommand{\hook}{\hookrightarrow}
\newcommand{\other}{\text{otherwise}}
\journal{Theoretical Computer Science}
\begin{document}

\begin{frontmatter}

\title{Categorical structure in coherent theory of arithmetic} 

\author{Lingyuan Ye} 

\affiliation{organization={University of Cambridge},
            city={Cambridge},
            country={UK}}

\begin{abstract}
In this paper we provide a semantic and syntactic analysis of parametrised natural numbers object in coherent categories, or pr-coherent categories. Semantically, we show the definable functions in the initial pr-coherent category are exactly given by primitive recursive functions. We also show that any pr-coherent category supports the construction of bounded universal quantifications, which are absent in an arbitrary coherent category. Under these semantic consideration, we construct a coherent theory of arithmetic and we show its syntactic category is equivalent to the initial pr-coherent category. From a logical perspective, we also show that this theory can be identified as the $\Sigma_1$-fragment of $I\Sigma_1$. Thus as an application, we provide a structural proof of the classical result in proof theory that the strongly $\Sigma_1$-representable functions in $I\Sigma_1$ are exactly primitive recursive functions.
\end{abstract}



\begin{keyword}
arithmetic \sep provably total recursive function \sep coherent category \sep natural numbers object \sep categorical logic \sep proof theory

\MSC 03F30 \sep 03G30 \sep 03F10

\end{keyword}

\end{frontmatter}


\section{Introduction}\label{sec:intro}

There have been plenty of attempts to provide a categorical foundation of computability theory in the literature. For instance, it was suggested by Lambek in various lectures and talks that the ``natural'' recursion theories and their corresponding classes of computable functions should be linked to the computable numerical functions represented in various \emph{free} categorical structures with natural numbers object (NNO); e.g. see~\citet{hofstra2021aspects} for an overview. One important consequence of this perspective is that it naturally links computability to logic, because these free categories often can be presented by various logical or type theories. 

We already have many examples of this form. For instance, the classical monograph~\citep{lambek1988introduction} contains the classification of definable functions in the free Cartesian closed category with an NNO, the free topos with an NNO, etc. However, considerably fewer efforts have been made in weaker categorical contexts. One important exception is the result in~\citet{roman1989cartesian}, which classifies definable functions in the free \emph{Cartesian} categories with an NNO, which are exactly primitive recursive functions. 

This paper is concerned with a similar question in the context of \emph{coherent categories}. In particular, we consider coherent categories with a parametrised natural numbers object (PNO), or in short \emph{pr-coherent categories}. Since coherent categories are not Cartesian closed, hence without higher types, the stronger notion of a \emph{parametrised} natural numbers object makes sure that the primitive recursion scheme internal to such categories can have arbitrary parameters, which is not guaranteed simply by an NNO; cf.~\citet[A2.5]{johnstone2002sketches}. The main contributions of this paper are twofold. 

\subsection{Definable functions in the initial pr-coherent category}
Firstly, from a semantic perspective, we have classified the definable functions in the initial pr-coherent category, which we show are again primitive recursive (\Cref{thm:defninprcoh}). Comparing this with the result of~\citet{roman1989cartesian}, we come to the conclusion that the additional logical apparatus present in coherent categories, i.e. the existence of disjunction and existential quantification of predicates, do not enlarge the class of definable functions. This is particularly surprising when taking into consideration that the recursion principle associated to the PNO in a coherent category is also stronger, because induction holds for \emph{all coherent formulas} in this case, i.e. formulas with connectives $(\top,\wedge,\bot,\vee,\exists)$, rather than just equalities between terms. In contrast, if the underlying category has full first-order structures, i.e. it is a Heyting category or a Boolean category, then the class of definable functions between natural numbers object will be a \emph{proper extension} of primitive recursive functions; see e.g.~\citet{pudlak2017,moerdijk1997minimal}.

One of the main technical achievements in this aspect is the construction of a pr-coherent category $\PriM$ of \emph{recursively enumerable sets} (\Cref{def:prim}). Concretely, we have generalised the notion of primitive recursive functions to recursively enumerable sets, rather than just powers of the natural numbers. We then show that recursively enumerable sets with this notion of primitive recursive functions between them form a pr-coherent category, and the forgetful functor $U : \PriM \to \Set$ preserves the pr-coherent structures (\Cref{prop:primprcoherent}). This way, for the initial pr-coherent category $\mc C$ with the canonical interpretation $\mc C \to \Set$, initiality implies that this interpretation functor must factor through $\PriM$:
\[
  \begin{tikzcd}
    \mc C \ar[dr] \ar[rr] & & \Set \\ 
    & \PriM \ar[ur] &
  \end{tikzcd}
\]
As a consequence, definable functions for the PNO in the initial pr-coherent category are contained in $\PriM$, which means they are all primitive recursive. 

\subsection{Presentation of the initial pr-coherent category by an arithmetic theory}
The second contribution of this paper can be considered syntactic, which explains in great detail the relation between the above semantic result and arithmetic theories in logic. As mentioned, oftentimes a free category can be represented as the \emph{syntactic category} of a logical theory. We show the initial pr-coherent category is presented by the $\Sigma_1$-fragment of $I\Sigma_1$.

$I\Sigma_1$ is a classical arithmetic theory, which is weaker than Peano arithmetic $\pa$, obtained by restricting the induction axioms in $\pa$ to $\Sigma_1$-formulas only. An arithmetic formula is $\Sigma_1$ if it is constructed out of atomic formulas using the connectives in $(\top,\wedge,\bot,\vee,\exists,\buq x{\ms t})$. In particular, $\buq x{\ms t}$ is usually referred to as the \emph{bounded universal quantifier}, where $x$ is a variable and $\ms t$ is a term not containing the variable $x$. The existence of bounded universal quantification is an important difference between $\Sigma_1$ and coherent formulas.

Thus, one main technical difficulty in establishing the link between the $\Sigma_1$-fragment of $I\Sigma_1$ and the initial pr-coherent category is that, \emph{a priori}, coherent categories do not support the interpretation of universal quantifiers. Our contribution here is to show in any pr-coherent category, the relevant bounded universal quantifiers \emph{do exist} (\Cref{cor:sigmabuq}).

In fact, to present the initial pr-coherent category, our strategy is to directly construct a coherent theory of arithmetic $\T$ with bounded universal quantifiers (\Cref{def:coharith}). We will show the syntactic category $\mc C[\T]$ of $\T$ is the initial pr-coherent category (\Cref{thm:ctinitial}). The initiality result means that our axiomatisation of $\T$ captures precisely the semantic content of pr-coherent categories. In particular, any pr-coherent category supports the interpretation of $\T$, and these interpretations will be preserved under coherent functors preserving the PNO (\Cref{lem:stability}).

Later we also establish that the syntactic category $\mc C[I\Sigma_1]_{\Sigma_1}$ of the $\Sigma_1$-fragment of $I\Sigma_1$ is equivalent to $\mc C[\T]$ (\Cref{cor:tsigonefrag}). However, if one is not motivated in traditional proof theory, then maybe the more important point to focus on is the existence of \emph{some} coherent theory of arithmetic that represents the initial pr-coherent category. Thus, we will mainly work with $\T$ in this paper, hopefully to increase readability.

\subsection{Application of initiality}\label{subsec:application}

Equipped with the semantic and syntactic result in this paper, the following theorem follows as a consequence:

\begin{theorem*}[Strongly representable functions in $I\Sigma_1$]\label{thm:isigma}
  In $I\Sigma_1$, the \emph{strongly $\Sigma_1$-representable functions} are exactly the primitive recursive functions.
\end{theorem*}

We say a function $f : \N^k \to \N$ is strongly $\Sigma_1$-representable in $I\Sigma_1$, if there is a $\Sigma_1$-formula $\varphi_f(\ov x,y)$ that satisfies the following two conditions:\footnote{In this paper we will generally use the over bar to denote a list of objects.}
\begin{itemize}
\item $\varphi_f(\ov x,y)$ defines the graph of $f$: For any $n_1,\cdots,n_k,m\in\N$,
  \[ \N \models \varphi_f(\ov{\ms n},\ms m) \eff f(\ov n) = m, \]
  where $\ms n_1,\cdots,\ms n_k,\ms m$ are \emph{numerals} of $n_1,\cdots,n_k,m$ in the theory.
\item $\varphi_f$ is \emph{provably functional} in $I\Sigma_1$,
  \[ I\Sigma_1 \vdash \forall\ov x\exists_{!}y\varphi_f(\ov x,y). \]
\end{itemize}
As we will see, the strongly $\Sigma_1$-representable functions in $I\Sigma_1$ exactly correspond to morphisms in $\mc C[I\Sigma_1]_{\Sigma_1}$. Thus, these functions are definable in the initial pr-coherent category, which are primitive recursive.

In the traditional proof-theoretic context, strongly $\Sigma_1$-representable functions are also referred to as \emph{provably total recursive functions}. The idea is that the graph of every recursive function is recursively enumerable, and every recursively enumerable sets can be defined by a $\Sigma_1$-formula. Thus, the set of strongly $\Sigma_1$-representable functions reflects the \emph{strength} of the arithmetic theory under consideration, by characterising the class of total recursive functions whose totality can be verified by this theory. The above theorem is one of the early cornerstones of the proof-theoretic analysis of arithmetic.

In fact, one of the initiatives of this paper is to provide a \emph{structural} proof of this result. The usual proof-theoretic strategy relies on very detailed analysis of different sequent calculi representing $I\Sigma_1$. Our categorical approach instead provides a proof \emph{invariant} under the proof system in use. In particular, the analysis in this paper shows that the validity of this statement comes from the following structural results:
\begin{itemize}
\item The $\Sigma_1$-fragment of $I\Sigma_1$ presents the initial pr-coherent category;
\item Primitive recursive functions between recursively enumerable sets form a pr-coherent category.
\end{itemize}

In the end, we will also discuss some further applications of initiality of $\T$ in \Cref{sec:furtherp} via its Freyd cover, a.k.a. Artin glueing; cf.~\citet{freyd1972aspects,Moerdijk1983}. Most importantly, it turns out that $\T$-provability coincides with \emph{truth} in the standard model (\Cref{thm:trpreq}). This implies e.g. that $\T$ has the \emph{disjunction property} (\Cref{cor:disjct}) and the \emph{existence property} (\Cref{cor:cant}). We end the paper by providing a summary of what we have done and indicate some future directions in \Cref{sec:future}.

\section{Coherent arithmetic}\label{sec:coharith}

In this section, we construct a coherent theory of arithmetic $\T$ that will ultimately be shown to present the initial pr-coherent category. Before defining $\T$, it might be better to first fully specify what \emph{is} a coherent theory. We do this quite carefully because part of the aim of this section is to establish some elementary arithmetic in $\T$, which involves working with its proof system. For general references on coherent logic, we refer the reader to \citet[D1.3]{johnstone2002sketches} and \citet[Sec. 1.2]{caramello2018theories}.

Given a vocabulary $\mc L$ which specifies the sorts, function symbols, and relation symbols, coherent $\mc L$-formulas are those first-order formulas that can be constructed by these symbols together with $=,\bot,\top,\wedge,\vee,\exists$. The deduction system of coherent theory involves \emph{sequents} of coherent formulas
\[ \varphi \vdash_{\ov x} \psi. \]
Here the list of variables $\ov x$ will be called the \emph{context} of the sequent, and the free variables of $\varphi$ and $\psi$ must be contained in $\ov x$.

The deduction system of coherent logic first contains the following \emph{structural rules}:
\begin{itemize}
\item \emph{Identity}:
  \[
    \begin{prooftree}
      \hypo{}
      \infer 1[(Id)]{\varphi \vdash_{\ov x} \varphi}
    \end{prooftree}
  \]
\item \emph{Substitution}: For any list of terms $\ov t$ in context $\ov y$,
  \[
    \begin{prooftree}
      \hypo{\varphi \vdash_{\ov x} \psi}
      \infer 1[(Sub)]{\varphi[\ov t/\ov x] \vdash_{\ov y} \psi[\ov t/\ov x]}
    \end{prooftree}
  \]
  Notice that the above rule only requires $\ov y$ to \emph{contain} free variables in $\ov t$, hence both \emph{weakening} and \emph{contraction} of contexts are special cases. \vspace{.5ex}
\item \emph{Cut}:
  \[
    \begin{prooftree}
      \hypo{\varphi \vdash_{\ov x} \psi}
      \hypo{\psi \vdash_{\ov x} \chi}
      \infer 2[(Cut)]{\varphi \vdash_{\ov x} \chi}
    \end{prooftree}
  \]
\end{itemize}\vspace{2ex}
It also contains the following axioms and rules for logical connectives:
\begin{itemize}
\item \emph{Conjunction}: Three axioms
  \[
    \begin{prooftree}
      \hypo{}
      \infer 1{\varphi \vdash_{\ov x} \top}
    \end{prooftree} \quad
    \begin{prooftree}
      \hypo{}
      \infer 1{\varphi \wedge \psi \vdash_{\ov x} \varphi}
    \end{prooftree} \quad
    \begin{prooftree}
      \hypo{}
      \infer 1{\varphi \wedge \psi \vdash_{\ov x} \psi}
    \end{prooftree}
  \]
  plus a rule
  \[
    \begin{prooftree}
      \hypo{\chi \vdash_{\ov x} \varphi}
      \hypo{\chi \vdash_{\ov x} \psi}
      \infer 2{\chi \vdash_{\ov x} \varphi \wedge \psi}
    \end{prooftree}
  \]
\item \emph{Disjunction}: Three axioms
  \[
    \begin{prooftree}
      \hypo{}
      \infer 1{\bot \vdash_{\ov x} \varphi}
    \end{prooftree} \quad
    \begin{prooftree}
      \hypo{}
      \infer 1{\varphi \vdash_{\ov x} \varphi \vee \psi}
    \end{prooftree} \quad
    \begin{prooftree}
      \hypo{}
      \infer 1{\psi \vdash_{\ov x} \varphi \vee \psi}
    \end{prooftree}
  \]
  plus a rule
  \[
    \begin{prooftree}
      \hypo{\varphi \vdash_{\ov x} \chi}
      \hypo{\psi \vdash_{\ov x} \chi}
      \infer 2{\varphi \vee \psi \vdash_{\ov x} \chi}
    \end{prooftree}
  \]
\item \emph{Existential Quantifier}: A double rule
  \[
    \begin{prooftree}
      \hypo{\varphi \vdash_{\ov x,y} \psi}
      \infer[double]1{\exists y\varphi \vdash_{\ov x} \psi}
    \end{prooftree}
  \]
  By our specification of contexts, $y$ must not occur freely in $\psi$. \vspace{.5ex}
\item \emph{Distributivity} and \emph{Frobenius}: Two more axioms,
  \[
    \begin{prooftree}
      \hypo{}
      \infer 1{\varphi \wedge (\psi \vee \chi) \vdash_{\ov x} (\varphi \wedge \chi) \vee (\varphi \vee \chi)}
    \end{prooftree} \quad
    \begin{prooftree}
      \hypo{}
      \infer 1{\varphi \wedge \exists y\psi \vdash_{\ov x} \exists y(\varphi \wedge \psi)}
    \end{prooftree}
  \]
  Given the context, on the right $y$ must not occur freely in $\varphi$. \vspace{.5ex}
\item \emph{Identity}: An axiom
  \[
    \begin{prooftree}
      \hypo{}
      \infer 1{\top \vdash_{x} x = x}
    \end{prooftree}
  \]
  and a double rule
  \[
    \begin{prooftree}
      \hypo{\varphi[y/z] \vdash_{\ov x,y} \psi[y/z]}
      \infer[double]1{\varphi \wedge y = z \vdash_{\ov x,y,z} \psi}
    \end{prooftree}
  \]
\end{itemize}

Of course, the above deduction system is not designed optimally for proof-theoretic analysis, but instead for actual reasoning in coherent logic. The crucial point is that these axioms and rules are naturally sound and complete with respect to the internal logic of coherent categories; cf.~\citet[D1.5]{johnstone2002sketches}.

Now we may proceed to construct the coherent theory $\T$ of arithmetic. Since firstly we need it to be able to construct all primitive recursive functions, we might as well add all of them as \emph{terms}. This is strictly speaking not necessary, since the induction rule with a few basic functions would already guarantee the existence of all primitive recursive functions. However, adding them explicitly will simplify the process of defining coding in $\T$, which will be useful later in \Cref{sec:init}.

We first use mutual recursion to define a set \emph{PrimRec} of function descriptions of primitive recursive functions, together with an arity function $\mr{ar} : \textit{PrimRec} \to \N$, and a denotational semantics $\mathrm{ev}$, sending each function description to the function it denotes:
\begin{definition}[Function description]\label{def:funcdesc}
  The set \emph{PrimRec}, an \emph{arity function} $\mr{ar} : \textit{PrimRec} \to \N$, and its denotational semantics $\mathrm{ev}$ are given mutually recursively by:
  \begin{itemize}
  \item $\ms z,\ms s$ and $\pi_n^k$ for all $1 \le k \le n$ belong to \emph{PrimRec}, with their arity being $1,1$ and $n$, respectively, and
    \[ \mathrm{ev}(\ms z)(x) = 0, \quad \mathrm{ev}(\ms s)(x) = x+1, \quad \mathrm{ev}(\pi_n^k)(x_1,\cdots,x_n) = x_k. \]
  \item If $\ms h$ and $\ms g_1,\cdots,\ms g_n$ are in \emph{PrimRec}, with
    \[ \mr{ar}(\ms h) = n,\ \mathrm{ev}(\ms h) = h, \quad \forall i \le n.\ \mr{ar}(\ms g_i) = m,\ \mathrm{ev}(\ms g_i) = g_i, \]
    then $\mathrm{Cn}[\ms h,\ms g_1,\cdots,\ms g_n] \in \textit{PrimRec}$, with $\mr{ar}(\mathrm{Cn}[\ms h,\ms g_1,\cdots,\ms g_n]) = m$, and $f = \ev(\mathrm{Cn}[\ms h,\ms g_1,\cdots,\ms g_n])$ is defined by composition
    \begin{align*}
      f(x_1,\cdots,x_m) = h(g_1(x_1,\cdots,x_m),\cdots,g_n(x_1,\cdots,x_m)). 
    \end{align*}
  \item If $\ms g$ and $\ms h$ are in \emph{PrimRec}, with
    \[ \mr{ar}(\ms g) = n,\ \mathrm{ev}(\ms g) = g, \quad \mr{ar}(\ms h) = n+2,\ \mathrm{ev}(\ms h) = h, \]
    then $\mathrm{Pr}[\ms g,\ms h]\in$ \emph{PrimRec}, with $\mr{ar}(\mathrm{Pr}[\ms g,\ms h]) = n+1$, and $f = \ev(\mathrm{Pr}[\ms g,\ms h])$ is defined by primitive recursion
    \begin{align*}
      f(x_1,\cdots,x_n,y) =
      \begin{cases}
        g(x_1,\cdots,x_n) & y = 0 \\
        h(x_1,\cdots,x_n,z,f(x_1,\cdots,x_n,z)) & y = z + 1
      \end{cases}
    \end{align*}
  \end{itemize}
\end{definition}

Our idea is then to add all elements in \emph{PrimRec} as function symbols to our theory $\T$, together with their defining axioms. 

\begin{definition}[Coherent Arithmetic]\label{def:coharith}
  The theory of coherent arithmetic $\T$ has a constant $\ms 0$, a set \emph{PrimRec} of function symbols with their arity specified by $\mr{ar}$, and a binary predicate $<$ for the ``less than'' relation. $\T$ contains the following axioms:
  \begin{itemize}
  \item For $\ms z$ and $\pi_n^k$ for any $1 \le k \le n$, we have
    \[ \top \vdash_x \ms zx = \ms 0, \quad \top \vdash_{x_1,\cdots,x_n} \pi_n^k(x_1,\cdots,x_n) = x_k. \]
  \item For $\ms s$ we have
    \[ \ms sx = \ms 0 \vdash_x \bot, \quad \ms sx = \ms sy \vdash_{x,y} x = y. \]
  \item If $\ms f$ is of the form $\mathrm{Cn}[\ms h,\ms g_1,\cdots,\ms g_n]$, then we have
    \[ \top \vdash_{\ov x} \ms f(\ov x) = \ms h(\ms g_1(\ov x),\cdots,\ms g_n(\ov x)). \]
  \item If $\ms f$ is of the form $\mathrm{Pr}[\ms g,\ms h]$, then we have
    \[ \top \vdash_{\ov x} \ms f(\ov x,\ms 0) = \ms g(\ov x), \quad \top \vdash_{\ov x,y} \ms f(\ov x,\ms sy) = \ms h(\ov x,y,\ms f(\ov x,y)). \]
  \item For the binary predicate $<$, we have
    \[ x < y \dashv\vdash_{x,y} \exists z(x + \ms sz = y), \]
    where $+$ is the function symbol in \emph{PrimRec} corresponding to the usual primitive recursive definition of addition.
  \end{itemize}
  Besides the usual logical connectives and their axioms of coherent logic, we further equip $\T$ with the following additional structures:
  \begin{itemize}
  \item \emph{Bounded universal quantification}: For any $\T$-formula $\varphi(\ov x)$, $\buq z{t(\ov y)}\varphi$ is also a $\T$-formula for a $\T$-term $t$ that does not contain $z$, with the following double rule:
    \[
      \begin{prooftree}
        \hypo{\psi(\ov x,\ov y) \vdash_{\ov x,\ov y} \buq z{t}\varphi(\ov x,\ov y,z)}
        \infer[double]1{\psi(\ov x,\ov y) \wedge z < t \vdash_{\ov x,\ov y,z} \varphi(\ov x,\ov y,z)}
      \end{prooftree}
    \]
    Note the context specifies $z$ must not occur in $\psi$ as a free variable. \vspace{.5ex}
  \item \emph{Induction rule}: It has a right induction rule
    \[
      \begin{prooftree}
        \hypo{\psi(\ov x) \vdash_{\ov x} \varphi(\ov x,\ms 0)}
        \hypo{\psi(\ov x) \wedge \varphi(\ov x,y) \vdash_{\ov x,y} \varphi(\ov x,\ms sy)}
        \infer 2[(IndR)]{\psi(\ov x) \vdash_{\ov x,y} \varphi(\ov x,y)}
      \end{prooftree}
    \]
  \end{itemize}
\end{definition}

\begin{remark}[$\T$ is not a coherent theory \emph{a priori}]
  From the above construction, $\T$ fails to be coherent: its syntax allows for a new constructor of formulas by bounded universal quantification, and it extends coherent logic by a new induction \emph{rule}. Formulas in $\T$ more precisely should be referred to as $\Sigma_1$-formulas, rather than coherent formulas. However, we emphasise that if one only aims for the construction of a logical theory that represents the initial pr-coherent category, then the important things to check are simply (1) the syntactic category $\mc C[\T]$ of $\T$ will be a coherent category, which is indeed the case since $\T$ contains the connectives and rules for coherent logic (\Cref{prop:cohsyn}); and (2) the logical mechanisms in $\T$ are present in all pr-coherent categories, which will be shown in \Cref{sec:pno}. Whether $\T$ is strictly a coherent theory is not important from this perspective.
\end{remark}

\begin{remark}[$\T$ is a coherent theory \emph{after all}]\label{rem:coherentT}
  From another perspective, the semantic investigation of pr-coherent categories in \Cref{sec:pno} can be used to construct $\T$ as an \emph{honest} coherent theory; cf. \Cref{rem:tiscoherent}. The point is that every $\Sigma_1$-formula, modulo $\T$, will be equivalent to a coherent formula. Furthermore, we can replace adding the induction rule by the set of coherent sequents which are provable in $\T$. This way, we get an equivalent coherent axiomatisation of $\T$, and in the future we will still refer to $\T$ as a coherent theory. The reason that we favour our construction of $\T$ is for clarity and simplicity.
\end{remark}

To have a sense of what it is like to work within the theory $\T$, we show some examples of derivations in this system. We take this chance to establish some elementary arithmetic in $\T$ that will be used later when proving the initiality of $\mc C[\T]$. Through these examples, we intend to convince the reader that $\T$, though lacks the full apparatus of classical first-order logic, is sufficient to develop all basic arithmetic.

Since a potential audience might not be familiar with coherent logic in the first place, we start by establishing some logical principles which are admissible in coherent logic:

\begin{example}[Explicit existence implies existence]
  For formula $\varphi(\ov x,y)$ and term $t(\ov x)$, the principle $\varphi(\ov x,t(\ov x)) \vdash_{\ov x} \exists y\varphi(\ov x,y)$ is indeed provable. A formal derivation is as follows,
  \[
    \begin{prooftree}
      \hypo{}
      \infer 1[(Id)]{\exists y\varphi(\ov x,y) \vdash_{\ov x} \exists y\varphi(\ov x,y)}
      \infer 1{\varphi(\ov x,y) \vdash_{\ov x,y} \exists y\varphi(\ov x,y)}
      \infer 1[(Sub)]{\varphi(\ov x,t(\ov x)) \vdash_{\ov x} \exists y\varphi(\ov x,y)}
    \end{prooftree}
  \]
  The second step uses the double rule for existential quantifier, and the last step uses the structure rule of substitution. Notice in particular the context contraction from the second sequent to the last one. This is allowed by the definition of the substitution rule.
\end{example}

\begin{example}[Equality is a congruence]
  Given the rule for equality, we can deduce symmetry and transitivity of equality as follows:
  \[
    \begin{prooftree}
      \hypo{}
      \infer 1{\top \vdash_x x = x}
      \infer 1{x = y \vdash_{x,y} y = x}
    \end{prooftree}
  \]
  This seems mysterious at first. Let $\equiv$ denote the \emph{syntactic} equality of formulas, viz. equality of strings. What we are really doing is defining two formulas $\varphi :\equiv \top$ and $\psi :\equiv y = x$, hence $\varphi[x/y] \equiv \top$, and $\psi[x/y] \equiv x = x$. The above derivation then follows from the equality rule specified above. Similarly, for transitivity we also have
  \[
    \begin{prooftree}
      \hypo{}
      \infer 1{x = z \vdash_{x,z} x = z}
      \infer 1{x = y \wedge z = y \vdash_{x,y,z} x = z}
    \end{prooftree}
  \]
  Again, we have chosen $\varphi :\equiv x = y$ and $\psi :\equiv x = z$, then $\varphi[z/y] \equiv x = z$ and $\psi[z/y] \equiv x = z$. Modulo symmetry, this indeed gives us transitivity of equality. These properties plus the substitution rule are sufficient to show that equality is a congruence for all the functions and predicates in a theory.
\end{example}

Other valid logical principles that are familiar to us are also derivable in any coherent theory, which we leave for the reader to check. Next, we develop some basic arithmetic within $\T$, and we will freely use the afore-proven results. We will also revert to the usual informal argument when discussing provability within a formal theory, instead of explicitly writing out the full derivation tree every time.

\begin{example}[Successor]
  One basic principle is that every number is either zero or a successor, and we can prove $\top \vdash_x x = \ms 0 \vee \exists y(x = \ms sy)$ in $\T$. The strategy is to apply the (IndR) rule:
  \begin{itemize}
  \item For the base case, $\top \vdash \ms 0 = \ms 0$, hence also $\top \vdash \ms 0 = \ms 0 \vee \exists y(\ms 0 = \ms sy)$.
  \item For induction, $x = \ms 0 \vdash_x \ms sx = \ms s\ms 0$, thus $x = \ms 0 \vdash_x \exists y(\ms sx = \ms sy)$. We also have $x = \ms sy \vdash_x \ms sx = \ms s\ms sy$, thus it implies $x = \ms sy \vdash_{x,y} \exists y(\ms sx = \ms sy)$, and then it follows $\exists y(x = \ms sy) \vdash_x \exists y(\ms sx = \ms sy)$.
  \end{itemize}
\end{example}

\begin{example}[Addition]
  There is a binary function symbol in \emph{PrimRec} corresponding to the usual primitive recursive definition of addition. Thus in $\T$, there exists a function $+$, which we write in the usual infix notation, with the following axiomatisation,
  \[ \top \vdash_x x + \ms 0 = x, \quad \top \vdash_{x,y} x + \ms sy = \ms s(x + y). \]
  Equipped with the induction rule, all familiar elementary properties of addition are provable. For instance, we can show $\top \vdash_x \ms 0 + x = x$ in $\T$:
  \begin{itemize}
  \item Base case: Trivially, we have $\top \vdash \ms 0 + \ms 0 = \ms 0$ provable from our axiom. 
  \item Inductive case: The following can be reasoned within $\T$,
    \[ \ms 0 + x = x \vdash_x \ms 0 + \ms sx = \ms s(\ms 0 + x) = \ms sx. \]
  \end{itemize}
  As another example, we can show $\top \vdash_{x,y} \ms sx + y = \ms s(x + y)$ is provable in $\T$:
  \begin{itemize}
  \item Base case: Trivially, $\top \vdash_x \ms sx + \ms 0 = \ms sx$.
  \item Inductive case: Again we reason in $\T$ as follows,
    \[ \ms sx + y = \ms s(x + y) \vdash_{x,y} \ms sx + \ms sy = \ms s(\ms sx + y) = \ms s\ms s(x + y) = \ms s(x + \ms sy). \]
  \end{itemize}
  Hence by the induction principle, again $\top \vdash_{x,y} \ms sx + y = \ms s(x + y)$ is derivable. Using the above two facts, we can show as usual that addition is associative and commutative. The same for associativity and commutativity of multiplication, and the distributivity of multiplication over addition.
\end{example}

\begin{example}[Subtraction]
  Firstly, there is a unary function symbol $\ms p$ corresponding to the \emph{predecessor} function, with the axiomatisation in $\T$
  \[ \top \vdash \ms p\ms 0 = \ms 0, \quad \top \vdash_x \ms p(\ms sx) = x. \]
  Based on this, there is also a function symbol in \emph{PrimRec} corresponding to the primitive recursive definition of (truncated) subtraction. In $\T$, we denote this function symbol simply as $-$, with the following axiomatisation,
  \[ \top \vdash_x x - \ms 0 = x, \quad \top \vdash_{x,y} x - \ms sy = \ms p(x - y). \]
  We first show by induction on $y$ that $\top \vdash_{x,y} x - y = \ms sx - \ms sy$:
  \begin{itemize}
  \item For the base case, $\top \vdash_x x - \ms 0 = x$, and also
    \[ \top \vdash_x \ms sx - \ms s\ms 0 = \ms p(\ms sx - \ms s0) = \ms p\ms sx = x, \]
    thus $\top \vdash_x x - \ms 0 = \ms sx - \ms s\ms 0$.
  \item For the inductive case, we reason in $\T$
    \[ x - y = \ms sx - \ms sy \vdash_{x,y} x - \ms sy = \ms p(x - y) = \ms p(\ms sx - \ms sy) = \ms sx - \ms s\ms sy. \]
  \end{itemize}
  Using this, we can show that $\top \vdash_x x - x = \ms 0$:
  \begin{itemize}
  \item The base case is easy, since by axiom $\top \vdash \ms 0 - \ms 0 = \ms 0$;
  \item For the inductive case, we have $x - x = \ms 0 \vdash_x \ms sx - \ms sx = x - x = \ms 0$.
  \end{itemize}
  Similarly, by a double induction on $y,z$ we can also show in $\T$ that
  \[ \top \vdash_{x,y,z} (x + y) - z = (x - z) + y. \]
  We leave this for the reader to check.
\end{example}

\begin{example}[Order]\label{exm:order}
  From the properties of addition and subtraction, we can show in $\T$ the usual properties of the order relation $<$, viz. its transitivity, anti-symmetry, and linearity. Transitivity is evident from the commutativity and associativity of addition. For the other properties, we first observe that $x + y = x \vdash_{x,y} y = \ms 0$:
  \begin{align*}
    x + y = x
    &\vdash_{x,y} (x + y) - x = x - x \\
    &\vdash_{x,y} (x - x) + y = \ms 0 \\
    &\vdash_{x,y} \ms 0 + y = \ms 0 \\
    &\vdash_{x,y} y = \ms 0.
  \end{align*}
  Here we have used the previously established properties of subtraction. Now we can prove by induction that anti-symmetry $x < y \wedge y < x \vdash_{x,y} \bot$ holds:
  \begin{align*}
    x + \ms sz = y \wedge y + \ms sw = x
    &\vdash_{x,y,z,w} y + \ms sw + \ms sz = y \\
    &\vdash_{x,y,z,w} y + \ms s(w + \ms sz) = y \\
    &\vdash_{x,y,z,w} \ms s(w + \ms sz) = \ms 0 \\
    &\vdash_{x,y,z,w} \bot
  \end{align*}
  The above reasoning has used associativity of addition. By Frobenius and the rule for existential quantification, we get
  \[ \exists z(x + \ms sz = y) \wedge \exists w(y + \ms sw = x) \vdash_{x,y} \bot, \]
  or equivalently,
  \[ x < y \wedge y < x \vdash_{x,y} \bot. \]
  Similarly, we can also show $x < y \wedge x = y \vdash_{x,y} \bot$ in $\T$:
  \begin{align*}
    x + \ms sz = y \wedge_{x,y,z} x = y
    &\vdash_{x,y,z} x + \ms sz = x \\
    &\vdash_{x,y,z} \ms sz = \ms 0 \\
    &\vdash_{x,y,z} \bot.
  \end{align*}
  This again allows us to conclude
  \[ x < y \wedge x = y \vdash_{x,y} \bot. \]
  These facts give us half of trichotomy. For the other half, we need to show
  \[ \top \vdash_{x,y} x < y \vee x = y \vee y < x. \]
  We show by induction on $y$. The base case is trivial, since when $y = \ms 0$, we already have
    \[ \top \vdash_x x = \ms 0 \vee \exists z(x = \ms sz), \]
    and this implies $\top \vdash_x x = \ms 0 \vee \ms 0 < x$. For the induction step, we need to show $x < y \vee x = y \vee y < x \vdash_{x,y} x < \ms sy \vee x = \ms sy \vee \ms sy < x$. We distinguish three cases:
  \begin{enumerate}
  \item $x < y$: This trivially implies $x < \ms sy$.
  \item $x = y$: This also trivially implies $x < \ms sy$.
  \item $y < x$: We show $y + \ms sz = x \vdash_{x,y,z} x = \ms sy \vee \ms sy < x$. Further distinguish two cases. Either $z = \ms 0$, then $x = \ms sy$, or $\exists u(z = \ms su)$, then we have
    \begin{align*}
      y + \ms s\ms su = x
      &\vdash \ms s(y + \ms su) = x \\
      &\vdash \ms sy + \ms su = x
    \end{align*}
    This way, $\ms sy < x$ holds.
  \end{enumerate}
  This completes the proof. In the future, we also abbreviate $x < y \vee x = y$ as $x \le y$. The above results show that in $\T$ we can verify that $<,\le$ behave as expected.
\end{example}

At this point, we hope we have convinced the reader that all elementary arithmetic can be developed inside $\T$. At the end of this section, we use these to show that $\T$ also admits another induction rule. When doing elementary arithmetic, sometimes one would also want to do induction on a negated formula. However, since $\T$ is a coherent theory hence lack negation, this is not possible directly. But we do have a version of this expressed by the following \emph{left} induction rule:

\begin{proposition}\label{prop:leftindrule}
  The following \emph{left induction rule} is admissible in $\T$,
  \[
    \begin{prooftree}
      \hypo{\psi(\ov x,\ms 0) \vdash_{\ov x} \varphi(\ov x)}
      \hypo{\psi(\ov x,\ms sy) \vdash_{\ov x,y} \varphi(\ov x) \vee \psi(\ov x,y)}
      \infer 2[$(\mathrm{IndL})$]{\psi(\ov x,y) \vdash_{\ov x,y} \varphi(\ov x)}
    \end{prooftree}
  \]
\end{proposition}
\begin{proof}
  Suppose we have $\psi(\ov x,\ms 0) \vdash_{\ov x} \varphi(\ov x)$ and $\psi(\ov x,\ms sy) \vdash_{\ov x,y} \varphi(\ov x) \vee \psi(\ov x,y)$. We need to show the conclusion holds. Firstly, it is easy to see that
  \[ \ms 0 < x \vdash_x \ms s\ms p(x) = x, \]
  because $\ms 0 < x$ implies $x = \ms sy$ for some $y$. Similarly, we can also show
  \[ y < z \vdash_{y,z} \ms s(z - sy) = z - y. \]
  Now consider a new predicate
  \[ \psi'(\ov x,y,z) := \psi(\ov x,z-y). \]
  This way, by assumption we have
  \[ y < z \wedge \psi'(\ov x,y,z) \vdash_{\ov x,y,z} \varphi(\ov x) \vee \psi'(\ov x,\ms s y,z). \]
  We prove by induction on $y$ that
  \[ \ms 0 < z \wedge \psi'(\ov x,\ms 0,z) \vdash_{\ov x,y,z} z \le y \vee \varphi(\ov x) \vee \psi'(x,\ms sy,z). \]
  \begin{itemize}
  \item The base case $y = \ms 0$ is easy, since by assumption $y < z$, and when this holds we do have $\psi'(\ov x,y,z) \vdash_{\ov x,y,z} \varphi(\ov x) \vee \psi'(\ov x,\ms sy,z)$.
  \item For the inductive case, it suffices to show the following holds,
    \[ z \le y \vee \psi'(\ov x,y,z) \vdash_{\ov x,y,z} z \le \ms sy \vee \varphi(\ov x) \vee \psi'(\ov x,\ms sy,z). \]
    If $z \le y$, then evidently $z \le \ms sy$. If $y < z$, then from above we know that $y < z \wedge \psi'(\ov x,y,z)$ implies $\varphi(\ov x) \vee \psi'(\ov x,\ms sy,z)$.
  \end{itemize}
  In particular, this would imply that
  \[ \ms 0 < z \wedge \psi'(\ov x,\ms 0,z) \vdash_{\ov x,z} \varphi(\ov x) \vee \psi'(x,z,z), \]
  because if $0 < z$ then there exists some $u<z$ that $z = \ms su$. Thus, according to the definition of $\psi'$, we obtain
  \[ \ms 0 < z \wedge \psi(\ov x,z) \vdash_{\ov x,z} \varphi(\ov x) \vee \psi(\ov x,\ms 0). \]
  We then conclude $\psi(\ov x,y) \vdash_{\ov x,y} \varphi(\ov x)$: If $y = \ms 0$, we already have $\psi(\ov x,\ms 0) \vdash_{\ov x} \varphi(\ov x)$ by assumption; if $\ms 0 < y$, we also have $\psi(\ov x,y) \vdash_{\ov x,y} \varphi(\ov x) \vee \psi(\ov x,\ms 0) \vdash_{\ov x,y} \varphi(x)$. This completes the proof.
\end{proof}

\section{Parametrised natural numbers object in coherent categories}\label{sec:pno}

In this section, we provide a semantic analysis of pr-coherent categories, viz. coherent categories equipped with a parametrised natural numbers object. In particular, we will show that the logical mechanisms present in our coherent theory of arithmetic $\T$ introduced in the previous section is also available in any pr-coherent category, and furthermore preserved by any pr-coherent functor, viz. coherent functors preserving the PNO.

A coherent category is a category $\mc C$ that has (a) finite limits, (b) universal image factorisation of morphisms, and (c) universal finite joins in the subobject lattice; universality means that the corresponding structure is preserved by pullbacks. We refer the reader to~\citet[A1.4]{johnstone2002sketches} for definition and properties of coherent categories. For us, the most important fact is that any coherent category has an \emph{internal logic} that validates all the axioms and rules in coherent logic. In this section, we freely use the internal logic as internal constructions in $\mc C$. 

As mentioned in the introduction, the reason we consider \emph{parametrised} natural numbers object, rather than simply the natural numbers object, is that $\mc C$ in general will \emph{not} be Cartesian closed, thus lacks higher types. Then a natural numbers object will not be able to support recursion with parameters; cf.~\citet[A2.5]{johnstone2002sketches}. The notion of PNO is defined as follows:

\begin{definition}[Parametrised natural numbers object]
  An object $N$ in $\mc C$ is a \emph{parametrised natural numbers object}, or a \emph{PNO}, if it is equipped with,
  \[ \ms 0 : 1 \to N, \quad \ms s : N \to N, \]
  such that for any $f : A \to X$ and $g : X \to X$ in $\mc C$, there is a \emph{unique} map $\rec_{f,g} : A \times N \to X$ that makes the following diagram commute,
  \[
  \begin{tikzcd}
    A \ar[dr, "f"'] \ar[r, "\pair{\id,\ms 0}"] & A \times N \ar[d, "{\rec_{f,g}}"] & A \times N \ar[l, "{\id \times\ms s}"'] \ar[d, "{\rec_{f,g}}"] \\
    & X & X \ar[l, "g"]
  \end{tikzcd}
  \]
\end{definition}

\subsection{Primitive recursion and order}

It is well-known that if $\mc C$ has a PNO $N$, then we can construct all the primitive recursive functions as morphisms between powers of $N$ internally in $\mc C$; cf.~\citet[A2.5]{johnstone2002sketches}. For convenience of the reader, we reproduce the argument here:

\begin{lemma}\label{lem:pnoprimc}
  Let $N$ be a PNO in $\mc C$. For any $g : A \to B$, $h : A \times N \times B \to B$, there exists a unique map $f : A \times N \to B$ such that,
  \[
  \begin{tikzcd}
    A \ar[r, "{\pair{\id,\ms 0}}"] \ar[dr, "g"'] & A \times N \ar[d, "f"] & A \times N \ar[l, "{\id \times\ms s}"'] \ar[d, "{\pair{\id,f}}"] \\
    & B & A \times N \times B \ar[l, "h"]
  \end{tikzcd}
  \]
  Intuitively, $f$ is obtained from primitive recursion as follows\footnote{Since here $A$ and $B$ are arbitrary objects, and in particular can be products of $N$, we don't lose any generality of primitive recursion.}
  \[
    f(a,n) =
    \begin{cases}
      g(a) & n = 0 \\
      h(a,m,f(a,m)) & n = m + 1
    \end{cases}
  \]
\end{lemma}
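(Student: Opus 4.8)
The plan is to bootstrap the full primitive recursor of the statement from the bare iteration that the PNO already provides. The obstruction to doing this directly is that $\rec_{f,g}$ only iterates a \emph{plain} endomorphism $g : X \to X$, whose action depends neither on the parameter $a$ nor on the current stage $n$, whereas the step map $h$ in the statement is allowed to consult $a$, the stage $n$, and the previously computed value. I will remove this obstruction by \emph{internalising the counter}: instead of recursing on $B$, I recurse on $A \times N \times B$ and build a map $F : A \times N \to A \times N \times B$ whose intended behaviour is $F(a,n) = (a, n, f(a,n))$. Carrying $a$ and $n$ along in the first two coordinates turns the required step into a genuine endomorphism of $A \times N \times B$.

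Concretely, I would apply the universal property of $N$ with $X := A \times N \times B$, base map $A \to A \times N \times B$, $a \mapsto (a, \ms 0, g(a))$, and step map $A \times N \times B \to A \times N \times B$, $(a,n,b) \mapsto (a, \ms s\, n, h(a,n,b))$; both are legitimate morphisms of $\mc C$ assembled from projections, the given arrows, and $\ms 0,\ms s$. This produces a unique $F$, and I then set $f$ to be the composite of $F$ with the projection onto $B$. The base clause $f \circ \pair{\id,\ms 0} = g$ then follows immediately by postcomposing the base clause of $F$ with that projection.

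The crux of the proof, and the step I expect to be the main obstacle, is to show that $F$ genuinely records the parameter and the stage, i.e. that $F = \pair{\id,f}$, equivalently that the first two coordinates of $F$ form the identity on $A \times N$. This is \emph{not} built into the definition of $F$, so I would establish it by a \emph{second} use of the uniqueness clause of the PNO: the composite of $F$ with the projection onto $A \times N$ and the identity $\id_{A\times N}$ both satisfy the recursor equations for the iteration with base $a \mapsto (a,\ms 0)$ and step $(a,n) \mapsto (a,\ms s\,n)$, hence coincide. Granting $F = \pair{\id,f}$, the step clause of $F$ together with the definition of the step map yields $f \circ (\id \times \ms s) = h \circ F = h \circ \pair{\id,f}$, which is exactly the recursion equation demanded by the statement.

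For uniqueness, suppose $f'$ also satisfies the two equations of the statement. I would form $\pair{\id,f'} : A \times N \to A \times N \times B$ and verify that it satisfies the two recursor equations that define $F$ --- the base clause using $f' \circ \pair{\id,\ms 0} = g$, and the step clause using $f' \circ (\id \times \ms s) = h \circ \pair{\id,f'}$. The uniqueness of $\rec$ then forces $\pair{\id,f'} = F$, whence $f' = f$ after projecting to $B$. Apart from the single bookkeeping lemma $F = \pair{\id,f}$, every verification here is a routine diagram chase driven by the projection identities.
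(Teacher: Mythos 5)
Your proposal is correct and follows essentially the same route as the paper: recurse on $A \times N \times B$ with base $\pair{\id,\ms 0,g}$ and step $\pair{\pi_A,\ms s\circ\pi_N,h}$, then use the uniqueness clause of the PNO a second time to show the $A\times N$-component of the resulting map is the identity, so that it has the form $\pair{\id,f}$. Your treatment of the uniqueness of $f$ (via $\pair{\id,f'}$ satisfying the same recursor equations) is a detail the paper leaves implicit, but the argument is the same.
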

\begin{proof}
  Since $N$ is a PNO, we consider the following induced diagram,
  \[
  \begin{tikzcd}
    A \ar[r, "{\pair{\id,\ms 0}}"] \ar[ddr, bend right, "{\pair{id,\ms 0}}"'] \ar[dr, "{\pair{\id,\ms 0,g}}"'] & A \times N \ar[d, "{\rec}"] & A \times N \ar[l, "{\id \times\ms s}"'] \ar[d, "{\rec}"] \\
    & A \times N \times B \ar[d, "{\pi_{A \times N}}"] & A \times N \times B \ar[l, "{\pair{\pi_A,\ms s \circ \pi_N,h}}"] \ar[d, "{\pi_{A \times N}}"] \\
    & A \times N & A \times N \ar[l, "{\id \times \ms s}"]
  \end{tikzcd}
  \]
  where the various maps labelled by $\pi$ are projections. By uniqueness of the universal property of PNO, we must have
  \[ \pi_{A \times N} \circ \rec = \id. \]
  Thus, $\rec$ must be of the form $\pair{\id,f}$, for some $f : A \times N \to B$. Now commutativity of the upper level gives us the desired property.
\end{proof}

This way, the PNO $N$ in $\mc C$ admits an interpretation of all the function symbols in $\T$, because all of them are generated from $\ms z,\ms s,\pi_n^k$ via composition and primitive recursion, which exist for $N$ in $\mc C$. Similarly, the order $<$ with the corresponding definition as
\[ x < y := \exists z(x + \ms sz = y) \]
is also a definable subobject of $N \times N$ in $\mc C$, because the above formula is coherent. It follows that the PNO in $\mc C$ can interpret all the non-logical symbols of the coherent arithmetic $\T$.

\subsection{Induction principle}
The next step is to show that the \emph{induction rule} is also valid in the internal logic of $\mc C$ with the PNO $N$. To formulate the induction principle, consider an object $X$ in $\mc C$. Given two subobjects $\varphi,\psi$, we write
\[ X \models \varphi(x) \vdash \psi(x) \]
if $\varphi \le \psi$ in the subobject lattice $\sub(X)$. We also write
\[ X \models \varphi(x) \dashv\vdash \psi(x), \]
if $\varphi$ and $\psi$ agree in $\sub(X)$.

The reason we attach the variable $x$ to these subobjects above is because it provides a syntactic way of using \emph{substitutions} to denote pullbacks. For instance, given a subobject $\varphi$ of $X$ and a morphism $f : Y \to X$, we will use $\varphi(f(y))$ to denote the following pullback,
\[
  \begin{tikzcd}
    \varphi(f(y)) \ar[r] \ar[d, hook] & \varphi(x) \ar[d, hook] \\
    Y \ar[r, "f"'] & X
    \arrow["\lrcorner"{anchor=center, pos=0.125}, draw=none, from=1-1, to=2-2]
  \end{tikzcd}
\]
The reason to write them in \emph{sequent} form is because these semantic relations are \emph{closed} under derivation rules of coherent logic (cf. \Cref{sec:coharith}), which are expressed in sequent form. In particular, the logical rules for $\top,\bot,\wedge,\vee,\exists$ are \emph{sound} for the internal logic of $\mc C$; cf.~\citet[D1.3.2]{johnstone2002sketches}.

Similarly, the validity of the induction principle for $N$ in $\mc C$ should also mean that these semantic relations will be closed under application of (IndR). If we can show this, then all the syntactic proofs we have given in \Cref{sec:coharith} also become true in the internal logic of $\mc C$.

\begin{proposition}[PNO satisfies the right induction rule]\label{lem:indpno}
  Let $N$ be a PNO in $\mc C$. For any subobject $\varphi$ of $X \times N$ and $\psi$ of $X$ in $\mc C$, if we have
  \[ X \models \psi(x) \vdash \varphi(x,\ms 0), \quad X \times N \models \psi(x) \wedge \varphi(x,n) \vdash \varphi(x,\ms s n), \]
  then we also have
  \[ X \times N \models \psi(x) \vdash \varphi(x,n). \]
\end{proposition}
\begin{proof}
  Notice that we have two commutative squares as follows,
  \[
    \begin{tikzcd}
      \psi \ar[d, hook] \ar[r] & \varphi \ar[d, hook] \\
      X \ar[r, "{\pair{\id,\ms 0}}"'] & X \times N
    \end{tikzcd} \quad
    \begin{tikzcd}
      \psi \ar[d, hook] \ar[r, "{\pair{\id,\ms 0}}"] & \psi \times N \ar[d, hook] \\
      X \ar[r, "{\pair{\id,\ms 0}}"'] & X \times N
    \end{tikzcd}
  \]
  The left square commutes simply because $X \models \psi(x) \vdash \varphi(x,\ms 0)$, and the right square commutes simply by the product structure. This shows that the two squares define the same map from $\psi$ to $X \times N$, hence by the universal property of pullback, we get a map of the following type,
  \[ \psi \to (\psi \times N) \wedge \varphi, \]
  where the conjunction $\wedge$ is taken in the subobject lattice $\sub(X \times N)$. On the other hand, $X \times N \models \psi(x) \wedge \varphi(x,n) \vdash \varphi(x,\ms sn)$ implies we have
  \[
    \begin{tikzcd}
      (\psi \times N) \wedge \varphi \ar[dr, hook] \ar[r, hook] & \varphi(x,\ms sn) \ar[d, hook] \ar[r, "\ms s"] & \varphi(x,n) \ar[d, hook] \\
      & X \times N \ar[r, "\id \times \ms s"'] & X \times N 
    \end{tikzcd}
  \]
  This shows that $\ms s$ induces an \emph{endomorphism} on the subobject $(\psi \times N) \wedge \varphi$, essentially by taking $(x,n)$ in this subobject to $(x,\ms sn)$. We again denote this map as $\id \times \ms s$. Now we can use the universal property of PNO, to create a following diagram,
  \[
    \begin{tikzcd}
      \psi \ar[r, "{\pair{\id,0}}"] \ar[dr] \ar[ddr, bend right] & \psi \times N \ar[d, "\rec"] & \psi \times N \ar[l, "\ms s"'] \ar[d, "\rec"] \\
      & (\psi \times N) \wedge \varphi \ar[d, hook] & (\psi \times N) \wedge \varphi \ar[l, "\id \times \ms s"'] \ar[d, hook] \\
      & \psi \times N & \psi \times N \ar[l, "\id \times \ms s"']
    \end{tikzcd}
  \]
  By the universal property, the composition of $\rec$ and the inclusion above must be identity. This shows that
  \[ X \times N \models \psi(x) \vdash \varphi(x,n). \qedhere \]
\end{proof}

\begin{remark}
  Notice that \Cref{lem:indpno} is in fact stronger than merely saying that the induction rule in $\T$ is valid for $N$ in $\mc C$. \emph{A priori}, the induction rule in $\T$ only applies to $\Sigma_1$-formulas, while \Cref{lem:indpno} applies to arbitrary predicates in $\mc C$, which may not be $\Sigma_1$. For instance, if $\mc C$ is a Boolean category, then \Cref{lem:indpno} would imply that the induction rule holds for all classical predicates on $N$ in $\mc C$ as well.
\end{remark}

Now if we recall the proof of admissibility of the left induction rule (IndL) in $\T$ given in Proposition~\Ref{prop:leftindrule}, we realise that it only uses the existence of certain primitive recursive functions plus the right induction rule. Thus, on the semantic side we also have the following result:

\begin{corollary}[PNO also satisfies the left induction rule]\label{cor:leftindpno}
  Let $N$ be a PNO in $\mc C$. For any subobject $\varphi$ of $X$ and $\psi$ of $X \times N$ in $\mc C$, if we have
  \[ X \models \psi(x,\ms 0) \vdash \varphi(x), \quad X \times N \models \psi(x,\ms sn) \vdash \varphi(x) \vee \psi(x,n), \]
  then we also have
  \[ X \times N \models \psi(x,n) \vdash \varphi(x). \]
\end{corollary}
The left induction rule will be used quite a lot when we construct bounded universal quantifiers in $\mc C$ in the next two subsections.

\subsection{Bounded $\mu$-operator}
Though the internal logic of a coherent category $\mc C$ does not support the interpretation of universal quantification in general, we can show it supports the construction of bounded universal quantification. One intermediate step is to construct \emph{bounded $\mu$-operators} for certain subobjects in $\mc C$.

We say a subobject $\varphi$ of $X$ is \emph{complemented} if there exists another subobject $\qsi\varphi$ of $X$, such that
\[ X \models \varphi \wedge \qsi\varphi \vdash \bot, \quad X \models \top \vdash \varphi \vee \qsi\varphi. \]

\begin{example}
  Over a PNO $N$ in $\mc C$, $\ms 0$ and $\ms s$ defines a pair of complemented subobjects of $N$, because we have
  \[ N \models x = \ms 0 \vee \exists y(x = \ms s y), \quad N \models \ms sx = \ms 0 \vdash \bot. \]
  The equality $=$ and the order relations $<,\le$ on $N$ in $\mc C$ are also complemented subobjects of $N \times N$. By soundness, this follows from the trichotomy of the order we have shown to hold in $\T$ in \Cref{exm:order}. Also see \citet[A2.5]{johnstone2002sketches} for a semantic proof.
\end{example}

\begin{lemma}\label{lem:compchar}
  If $\varphi$ is a complemented subobject of $X$, then there is a map $\ms c_{\varphi} : X \to N$, such that the following are pullback squares,
  \[
  \begin{tikzcd}
    \varphi \ar[d, hook] \ar[r] & 1 \ar[d, "{\ms 0}"] \\
    X \ar[r, "{\ms c_{\varphi}}"'] & N
    \arrow["\lrcorner"{anchor=center, pos=0.125}, draw=none, from=1-1, to=2-2]
  \end{tikzcd} \quad\quad
  \begin{tikzcd}
    \qsi\varphi \ar[d, hook] \ar[r] & 1 \ar[d, "{\ms 1}"] \\
    X \ar[r, "{\ms c_{\varphi}}"'] & N
    \arrow["\lrcorner"{anchor=center, pos=0.125}, draw=none, from=1-1, to=2-2]
  \end{tikzcd}
  \]
\end{lemma}
\begin{proof}
  Since $\varphi$ is complemented, $X$ can be decomposed into a coproduct,
  \[ \varphi \sqcup \qsi\varphi \cong X, \]
  where $\qsi\varphi$ is its complement. Now let $\ms c_{\varphi}$ be the universally induced map from this coproduct,
  \[ \ms c_{\varphi} = [\ms 0,\ms 1] : \varphi \sqcup \qsi\varphi \cong X \to N. \]
  This gives the required map.
\end{proof}

For such complemented subobjects $\varphi$, we will call $\ms c_{\varphi}$ its \emph{character}. The character can be used to define bounded $\mu$-operator on complemented subobjects:

\begin{definition}[Bounded $\mu$-operator]\label{def:mu}
  For any complemented subobject $\varphi(x,y)$ of $X \times N$, we define a function $\mu_{\varphi}(x,y) : X \times N \to N$ by internal primitive recursion as follows,
  \[ \mu_{\varphi}(x,y) =
    \begin{cases}
      0 & y = 0 \\
      \mu_{\varphi}(x,z) + \ms c_{\varphi}(x,\mu_{\varphi}(x,z)) & y = z+1
    \end{cases}
  \]
  where $\ms c_{\varphi}$ is the character of $\varphi$.
\end{definition}
\noindent
The existence of $\ms c_{\varphi}$ for complemented subobjects by \Cref{lem:compchar}, and the fact that primitive recursion can be defined internally in $\mc C$ by \Cref{lem:pnoprimc}, shows that $\mu_{\varphi}$ is well-defined. We prove that this function indeed defines the bounded $\mu$-operator $\muq z y\varphi(x,y)$ internally in $\mc C$:

\begin{lemma}\label{lem:compmu}
  If $\varphi(x,y)$ is a complemented subobject on $X \times N$, then we have:
  \begin{itemize}
  \item $X \times N \models \mu_{\varphi}(x,y) \le y$;
  \item $X \times N^2 \models z < \mu_{\varphi}(x,y) \vdash \qsi\varphi(x,z)$;
  \item $X \times N \models \qsi\varphi(x,\mu_{\varphi}(x,y)) \vdash y = \mu_{\varphi}(x,y)$. 
  \item $X \times N \models \varphi(x,y) \vdash \varphi(x,\mu_{\varphi}(x,\ms sy))$.
  \end{itemize}
\end{lemma}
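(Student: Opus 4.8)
The plan is to prove all four items by internal induction on the recursion variable $y$ using the rule (IndR) of Lemma~\ref{lem:indpno}, after recording one preliminary fact about the character. From the two pullback squares of Lemma~\ref{lem:compchar}, the defining property of $\ms c_\varphi$ reads $X \times N \models \varphi(x,w) \dashv\vdash \ms c_\varphi(x,w) = \ms 0$ and $X \times N \models \qsi\varphi(x,w) \dashv\vdash \ms c_\varphi(x,w) = \ov 1$; in particular $\ms c_\varphi$ takes only the values $\ms 0,\ov 1$, so each recursion step changes $\mu_\varphi$ by either $0$ or $1$. Everything below then takes place inside the equational and order arithmetic of $N$ derivable from Lemma~\ref{lem:pnoprimc} and (IndR), together with the decidability of $=,<,\le$ on $N$. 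The first item is the most direct: running (IndR) on the predicate $\mu_\varphi(x,y) \le y$ with side formula $\top$, the base case is $\mu_\varphi(x,\ms 0) = \ms 0$, and the step computes $\mu_\varphi(x,\ms sy) = \mu_\varphi(x,y) + \ms c_\varphi(x,\mu_\varphi(x,y)) \le \mu_\varphi(x,y) + \ov 1 \le \ms sy$.

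The second and third items are genuinely \emph{conditional} statements, and here the main subtlety of the whole lemma appears: coherent logic has no implication, so I cannot directly induct on ``$z < \mu_\varphi(x,y)$ implies $\qsi\varphi(x,z)$''. The remedy --- and the precise reason complementedness is hypothesised --- is to rewrite each conditional as an equivalent \emph{disjunction} of coherent formulas and induct on that. Since $<$ is decidable, the second item is equivalent to $X \times N^2 \models \top \vdash \mu_\varphi(x,y) \le z \vee \qsi\varphi(x,z)$, which I would prove by (IndR) on $y$ with $x,z$ as parameters; the inductive step splits on $\varphi(x,\mu_\varphi(x,y))$ versus $\qsi\varphi(x,\mu_\varphi(x,y))$ via $\top \vdash \varphi \vee \qsi\varphi$, and then uses the decomposition of $\le$ into $<$ and $=$ to dispatch the subcase $\mu_\varphi(x,y) = z$. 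The third item is likewise equivalent to $X \times N \models \top \vdash \varphi(x,\mu_\varphi(x,y)) \vee y = \mu_\varphi(x,y)$; its inductive step again splits on $\varphi(x,\mu_\varphi(x,y))$, one branch leaving $\mu_\varphi$ unchanged and the other forcing $\mu_\varphi(x,y) = y$ through the induction hypothesis, where the unwanted disjunct $\varphi(x,\mu_\varphi(x,y))$ is eliminated against $\qsi\varphi(x,\mu_\varphi(x,y))$ by $\varphi \wedge \qsi\varphi \vdash \bot$.

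The fourth item then follows formally from the second and third without further induction. Assuming $\varphi(x,y)$, I split on $\varphi(x,\mu_\varphi(x,\ms sy))$ versus its complement; the first branch is exactly the goal, while in the second branch the third item applied at $\ms sy$ yields $\mu_\varphi(x,\ms sy) = \ms sy$, so that $y < \mu_\varphi(x,\ms sy)$ and the second item give $\qsi\varphi(x,y)$, which together with $\varphi(x,y)$ produces $\bot$ and discharges the branch by ex falso. The one recurring obstacle throughout is to remain inside coherent logic: every implication and every appeal to contradiction must be routed through the complementedness of $\varphi$ and of the order relations, so that all formulas actually fed to (IndR) and to the case splits are built only from $\top,\bot,\wedge,\vee,\exists$.
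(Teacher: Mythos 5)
Your proof is correct, and its overall skeleton matches the paper's: the first item by a direct induction using that $\ms c_\varphi$ only takes the values $\ms 0$ and $\ov 1$, the second and third items by induction with a case split on $\varphi(x,\mu_\varphi(x,y))$ versus $\qsi\varphi(x,\mu_\varphi(x,y))$, and the fourth item derived purely logically from the second and third exactly as you describe. The one genuine divergence is in how you handle the conditional form of the second and third items: the paper applies the \emph{left} induction rule (IndL) directly to the sequents $z < \mu_\varphi(x,y) \vdash \qsi\varphi(x,z)$ and $\qsi\varphi(x,\mu_\varphi(x,y)) \wedge \mu_\varphi(x,y) < y \vdash \bot$, letting the recursion variable sit in the antecedent, whereas you first rewrite each conditional as an unconditional disjunction ($\top \vdash \mu_\varphi(x,y) \le z \vee \qsi\varphi(x,z)$, respectively $\top \vdash \varphi(x,\mu_\varphi(x,y)) \vee y = \mu_\varphi(x,y)$) using decidability of $<,\le,=$, and then run (IndR). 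Both maneuvers are sound; since the paper shows (IndL) is itself derivable from (IndR) by a similar rephrasing trick (Lemma~\ref{lem:leftind}), the two routes are close cousins. Your version buys independence from (IndL) at the cost of explicitly invoking trichotomy and the complementedness of the order relations at each rewriting step; the paper's version is slightly more streamlined because (IndL) was introduced precisely to absorb that bookkeeping.
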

\begin{proof}
  The first can be easily proven inductively, since $\ms c_{\varphi}$ has value either 0 or 1. We prove the second property by the \emph{left} induction rule on $y$:
  \begin{itemize}
  \item Base case: Since $\mu_{\varphi}(x,y) = 0$, $z < \mu_{\varphi}(x,y)$ will be equivalent to $\bot$, hence $z < \mu_{\varphi}(x,y) \vdash \qsi\varphi(x,y)$ trivially holds.
  \item Inductive case: We need to show $z < \mu_{\varphi}(x,\ms sy) \vdash \qsi\varphi(x,z) \vee z < \mu_{\varphi}(x,y)$. We distinguish two cases. If $\varphi(x,\mu_{\varphi}(x,y))$ holds, then by definition $\mu_{\varphi}(x,\ms sy) = \mu_{\varphi}(x,y)$, hence $z < \mu_{\varphi}(x,\ms sy) \vdash z < \mu_{\varphi}(x,y)$ holds. If $\qsi\varphi(x,\mu_{\varphi}(x,y))$ holds, then $\mu_{\varphi}(x,\ms sy) = \ms s \mu_{\varphi}(x,y)$. $z < \mu_{\varphi}(x,\ms sy)$ can further be distinguished by two cases: $z < \mu_{\varphi}(x,y)$, and we are done again; $z = \mu_{\varphi}(x,y)$, then by assumption $\qsi\varphi(x,z)$ holds.
  \end{itemize}
  For the third property, we prove the equivalent sequent by left induction,
  \[ \qsi\varphi(x,\mu_{\varphi}(x,y)) \wedge \mu_{\varphi}(x,y) < y \vdash \bot. \]
  This comes from the fact that $\mu_{\varphi}(x,y) \le y$ holds, hence $\mu_{\varphi}(x,y) < y$ is the complement of $y = \mu_{\varphi}(x,y)$.
  \begin{itemize}
  \item Base case: This is trivial, since $\mu_{\varphi}(x,y) < 0$ is equivalent to $\bot$.
  \item Inductive case: We need to prove the following sequent holds,
    \[ \qsi\varphi(x,\mu_{\varphi}(x,\ms sy)) \wedge \mu_{\varphi}(x,\ms sy) < \ms sy \vdash \qsi\varphi(x,\mu_{\varphi}(x,y)) \wedge \mu_{\varphi}(x,y) < y. \]
    We distinguish two cases. If $\ms c_{\varphi}(x,\mu_{\varphi}(x,y)) = 0$, i.e. $\varphi(x,\mu_{\varphi}(x,y))$ holds, then $\mu_{\varphi}(x,\ms sy) = \mu_{\varphi}(x,y)$. This way, from $\qsi\varphi(x,\mu_{\varphi}(x,\ms sy))$ we also know $\qsi\varphi(x,\mu_{\varphi}(x,y))$, leading to contradiction. On the other hand, suppose $\qsi\varphi(x,\mu_{\varphi}(x,y))$ holds, then $\mu_{\varphi}(x,\ms sy) = \ms s\mu_{\varphi}(x,y)$, thus from $\mu_{\varphi}(x,\ms sy) < \ms sy$ we would get $\mu_{\varphi}(x,y) < y$. Hence, the consequence also holds. 
  \end{itemize}
  The final property is implied by the previous properties. Since $\varphi$ is complemented, either $\varphi(x,\mu_{\varphi}(x,\ms sy))$, or $\qsi\varphi(x,\mu_{\varphi}(x,\ms sy))$. In the latter case, by the third property we have $\mu_{\varphi}(x,\ms sy) = \ms sy$. Now since $y < \ms sy$, the second property implies $\qsi\varphi(x,y)$, contradicting the assumption $\varphi(x,y)$. Hence, we must have $\varphi(x,\mu_{\varphi}(x,\ms sy))$.
\end{proof}

\subsection{Bounded universal quantifier}
Using bounded $\mu$-operator, we can proceed to define \emph{bounded universal quantification} for complemented subobjects of $X \times N$ for any object $X$:

\begin{definition}[Bounded universal quantification for complemented subobjects]\label{def:bduniqt}
  If $\varphi(x,y)$ is complemented over $X \times N$, then we define the bounded universal quantification on $\varphi$ as follows,
  \[ \buq zy\varphi(x,z) := y = \mu_{\qsi\varphi}(x,y). \]
\end{definition}

From the above definition, if $\varphi$ is complemented, so is $\buq zy\varphi(x,z)$, and hence similarly for $\beq zy\varphi(x,z)$. Using the properties we have established for the bounded $\mu$-operator in \Cref{lem:compmu}, we can also show that the above definition of bounded quantification is internally correct:

\begin{lemma}\label{lem:compbduniqt}
  If $\varphi(x,y)$ is complemented over $X \times N$, then $\buq zy\varphi(x,z)$ satisfies the following universal property: For any subobject $\psi(x,y)$ on $X \times N$, 
  \begin{align*}
    X \times N &\models \psi(x,y) \vdash \buq zy\varphi(x,z) \\
    \eff X \times N^2 &\models \psi(x,y) \wedge z < y \vdash \varphi(x,z)
  \end{align*}
\end{lemma}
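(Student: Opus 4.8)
The plan is to prove the biconditional by first transporting the four clauses of Lemma~\ref{lem:compmu} from $\varphi$ to its complement $\qsi\varphi$, and then reading off the two implications directly. Since $\sub(X \times N)$ is a distributive lattice, complementation is involutive, so $\qsi\qsi\varphi = \varphi$; applying Lemma~\ref{lem:compmu} to the complemented subobject $\qsi\varphi$ therefore yields, among others, the three facts I will actually use: $X \times N \models \mu_{\qsi\varphi}(x,y) \le y$; $X \times N^2 \models z < \mu_{\qsi\varphi}(x,y) \vdash \varphi(x,z)$; and $X \times N \models \varphi(x,\mu_{\qsi\varphi}(x,y)) \vdash y = \mu_{\qsi\varphi}(x,y)$. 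Unfolding the definition $\buq zy\varphi(x,z) := (y = \mu_{\qsi\varphi}(x,y))$, these say precisely that $\mu_{\qsi\varphi}(x,y)$ is the least point $\le y$ at which $\varphi$ fails, and that $\buq zy\varphi(x,z)$ holds exactly when there is no such failure below $y$.

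For the forward implication, I assume $X \times N \models \psi(x,y) \vdash \buq zy\varphi(x,z)$. Pulling this sequent back to $X \times N^2$ and conjoining $z < y$, it suffices to establish the correctness of the bounded quantifier, namely $X \times N^2 \models \buq zy\varphi(x,z) \wedge z < y \vdash \varphi(x,z)$. But the left-hand side unfolds to $y = \mu_{\qsi\varphi}(x,y) \wedge z < y$, which entails $z < \mu_{\qsi\varphi}(x,y)$, and the second transported fact then delivers $\varphi(x,z)$. Composing with the hypothesis gives the right-hand sequent.

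For the backward implication, I assume $X \times N^2 \models \psi(x,y) \wedge z < y \vdash \varphi(x,z)$, and must show $X \times N \models \psi(x,y) \vdash y = \mu_{\qsi\varphi}(x,y)$. The first transported fact gives $\mu_{\qsi\varphi}(x,y) \le y$, and since $<$ is complemented inside $\le$ this splits as $\top \vdash \mu_{\qsi\varphi}(x,y) < y \vee \mu_{\qsi\varphi}(x,y) = y$. Conjoining with $\psi(x,y)$ and distributing, the equality disjunct already yields the goal, so it remains to handle the strict disjunct. Here I substitute the term $z := \mu_{\qsi\varphi}(x,y)$ into the hypothesis, obtaining $\psi(x,y) \wedge \mu_{\qsi\varphi}(x,y) < y \vdash \varphi(x,\mu_{\qsi\varphi}(x,y))$; the third transported fact then forces $y = \mu_{\qsi\varphi}(x,y)$, which together with $\mu_{\qsi\varphi}(x,y) < y$ yields $y < y$, i.e.\ $\bot$. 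Hence the strict disjunct collapses and $\psi(x,y) \vdash y = \mu_{\qsi\varphi}(x,y)$, as required.

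I expect the backward implication to be the delicate step. The only genuine subtlety is that the argument above looks like a proof by contradiction, which is unavailable in coherent logic; what makes it legitimate is that $<$ and $=$ are complemented subobjects of $\le$, so I may perform an honest decidable case-split on $\le$ and merely derive $\bot$ in the strict case (whence everything follows) rather than negate anything. Beyond this, the remaining care is purely bookkeeping: keeping the double-complement identity $\qsi\qsi\varphi = \varphi$ straight when transporting Lemma~\ref{lem:compmu}, and justifying the substitution of the term $\mu_{\qsi\varphi}(x,y)$ for the free variable $z$, which is just pullback along $\pair{\id,\mu_{\qsi\varphi}}$.
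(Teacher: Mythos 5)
Your proof is correct and follows essentially the same route as the paper's: both directions rest on transporting the clauses of Lemma~\ref{lem:compmu} to $\qsi\varphi$ and, for the backward implication, performing the decidable case split on $\mu_{\qsi\varphi}(x,y) \le y$ and substituting $z := \mu_{\qsi\varphi}(x,y)$ into the hypothesis. The only (immaterial) difference is that in the strict case the paper concludes $y = \mu_{\qsi\varphi}(x,y)$ directly from the third transported fact, whereas you take one further step to $\bot$; both are valid in coherent logic.
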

\begin{proof}
  Suppose $X \times N \models \psi(x,y) \vdash y = \mu_{\qsi\varphi}(x,y)$. By \Cref{lem:compmu} we know that $z < \mu_{\qsi\varphi}(x,y) \vdash \varphi(x,z)$ holds, hence $X \times N^2 \models \psi(x,y) \wedge z < y \vdash \varphi(x,z)$. On the other hand, suppose $\psi(x,y) \wedge z < y \vdash \varphi(x,z)$ holds. We distinguish two cases. If $y = \mu_{\qsi\varphi}(x,y)$, this is exactly what we want. If $\mu_{\qsi\varphi}(x,y) < y$, then by assumption we have
  \[ X \times N \models \psi(x,y) \wedge \mu_{\qsi\varphi}(x,y) < y \vdash \varphi(x,\mu_{\qsi\varphi}(x,y)). \]
  By \Cref{lem:compmu} again, $\varphi(x,\mu_{\qsi\varphi}(x,y)) \vdash y = \mu_{\qsi\varphi}(x,y)$ also holds. Either way, we have the desired result.
\end{proof}

However, to fully interpret the bounded universal quantification in $\T$, we also need to construct those for $\Sigma_1$-objects as well. We first define what a $\Sigma_1$-subobject is:

\begin{definition}[$\Sigma_1$-subobject]
  A subobject $\psi$ of $X$ is $\Sigma_1$, if there exists a complemented subobject $\varphi$ of $X \times N^{n}$ for some $n$, such that
  \[ X \models \psi(x) \dashv\vdash \exists\ov y\varphi(x,\ov y). \]
\end{definition}

With bounded $\mu$-operator and bounded universal quantifier defined for complemented subobjects, we can show there is a certain \emph{choice principle} for $\Sigma_1$-subobjects of $X \times N$ in $\mc C$. 

\begin{lemma}[$\Sigma_1$-minimisation]\label{lem:sigmin}
  For any $\Sigma_1$-subobject $\exists y\varphi(x,y)$ of $X$ with $\varphi(x,y)$ complemented over $X \times N$, the following holds,
  \[ X \models \exists y\varphi(x,y) \vdash \exists y(\varphi(x,y) \wedge \buq zy\qsi\varphi(x,z)). \]
\end{lemma}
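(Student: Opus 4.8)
The plan is to read the statement as the \emph{least witness} (minimisation) principle for a decidable predicate, and to derive it directly from the bounded $\mu$-operator and its established properties, rather than re-running an induction. Since $\varphi$ is decidable over $X \times N$, so is $\qsi\varphi$ (its complement being $\varphi$), so the bounded quantifier $\buq zy\qsi\varphi(x,z)$ is well-formed, and by Definition~\ref{def:bduniqt} it unfolds to the predicate $y = \mu_{\varphi}(x,y)$ (using $\qsi\qsi\varphi = \varphi$). First I would apply the left rule for the existential quantifier to reduce the sequent to one over $X \times N$: it suffices to show, in the context $(x,w)$ under the hypothesis $\varphi(x,w)$, that $\exists y(\varphi(x,y) \wedge \buq zy\qsi\varphi(x,z))$ holds. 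The witness I propose is the term $m := \mu_{\varphi}(x,\ms sw)$.

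Next I would verify the two conjuncts at $y := m$. For $\varphi(x,m)$, the fourth clause of Lemma~\ref{lem:compmu} gives exactly $\varphi(x,w) \vdash \varphi(x,\mu_{\varphi}(x,\ms sw))$, so $\varphi(x,m)$ holds; this is why I take $\ms sw$ rather than $w$ as the bound, as it sidesteps any case split on decidability. For the second conjunct I must show $\buq zm\qsi\varphi(x,z)$, i.e. $m = \mu_{\varphi}(x,m)$. Here I would invoke the universal property of the bounded quantifier from Lemma~\ref{lem:compbduniqt}, applied to the complemented predicate $\qsi\varphi$, with the auxiliary subobject $\psi(x,y) := \exists v(y = \mu_{\varphi}(x,v))$ recording that ``$y$ is a value of the $\mu$-operator''. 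The two checks are: $\psi(x,m)$ holds, witnessed by $v := \ms sw$; and $X \times N^2 \models \psi(x,y) \wedge z < y \vdash \qsi\varphi(x,z)$, which follows because $y = \mu_{\varphi}(x,v)$ together with $z < y$ triggers the second clause of Lemma~\ref{lem:compmu}, namely $z < \mu_{\varphi}(x,v) \vdash \qsi\varphi(x,z)$. Lemma~\ref{lem:compbduniqt} then yields $\psi(x,y) \vdash \buq zy\qsi\varphi(x,z)$, and instantiating $y := m$ gives $\buq zm\qsi\varphi(x,z)$. Combining the two conjuncts and re-quantifying over $y$ completes the argument.

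The step I expect to be the main obstacle is this second conjunct, where the bound of the universal quantifier is a \emph{term} $m = \mu_{\varphi}(x,\ms sw)$ rather than a free variable. The universal property in Lemma~\ref{lem:compbduniqt} is phrased for a free bound $y$, so the care is to feed it the auxiliary predicate $\psi$ that holds precisely at $\mu$-values and to verify its defining condition uniformly in $y$, only afterwards substituting $y := m$; attempting instead to check $z < m \vdash \qsi\varphi(x,z)$ with $m$ already substituted would not let one apply the universal property cleanly. The remaining bookkeeping, namely that all predicates in sight stay decidable so that the bounded quantifier and the $\mu$-operator are defined, is routine given the closure of decidable subobjects under complementation together with the results of this subsection.
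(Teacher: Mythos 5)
Your proof is correct and follows essentially the same route as the paper: reduce by the rule for the existential quantifier to the hypothesis $\varphi(x,u)$, take the witness $\mu_{\varphi}(x,\ms su)$, and let the fourth and second clauses of Lemma~\ref{lem:compmu} supply the two conjuncts. The paper asserts the bounded-universal conjunct directly from Lemma~\ref{lem:compmu}, whereas your detour through the auxiliary predicate $\psi(x,y) := \exists v(y = \mu_{\varphi}(x,v))$ and Lemma~\ref{lem:compbduniqt} is just a more explicit justification of that same substitution step, not a different argument.
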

\begin{proof}
  It is equivalent to show that
  \[ X \times N \models \varphi(x,u) \vdash \exists y(\varphi(x,y) \wedge \buq zy\qsi\varphi(x,z)). \]
  By definition $\varphi$ is complemented, hence we have a function $\mu_{\varphi}$ computing its bounded minimisation. From \Cref{lem:compmu}, we know
  \[ X \times N \models \varphi(x,u) \vdash \varphi(x,\mu_{\varphi}(x,\ms su)) \wedge \buq z{\mu_{\varphi}(x,\ms su)}\qsi\varphi(x,z). \]
  This completes the proof.
\end{proof}

\begin{corollary}[$\Sigma_1$-choice]\label{cor:sigmachoice}
  If a $\Sigma_1$-subobject $\exists y\varphi(x,y)$ of $X$ is valid in $\mc C$, viz. $X \models \exists y\varphi(x,y)$, then there exists a function $f : X \to N$ such that
  \[ X \models \varphi(x,f(x)). \]
\end{corollary}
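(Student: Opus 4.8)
The plan is to derive Corollary~\ref{cor:sigmachoice} directly from the $\Sigma_1$-Minimisation lemma (Lemma~\ref{lem:sigmin}), using the fact that the minimiser produced there is \emph{canonical}, and hence functional. First I would observe that the hypothesis $X \models \exists y\varphi(x,y)$, combined with Lemma~\ref{lem:sigmin}, gives
\[ X \models \top \vdash \exists y\bigl(\varphi(x,y) \wedge \buq zy\qsi\varphi(x,z)\bigr). \]
The point of the bounded-quantifier clause is that it forces $y$ to be the \emph{least} witness: I would argue that the predicate $\theta(x,y) := \varphi(x,y) \wedge \buq zy\qsi\varphi(x,z)$ is \emph{single-valued}, i.e. $X \times N^2 \models \theta(x,y) \wedge \theta(x,y') \vdash y = y'$. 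This uses decidability of $<$ on $N$: if $y < y'$, then from $\theta(x,y')$ and Lemma~\ref{lem:compbduniqt} we get $\qsi\varphi(x,y)$, contradicting $\varphi(x,y)$ from $\theta(x,y)$; symmetrically if $y' < y$; so by trichotomy $y = y'$.

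Having established that $\theta$ is single-valued and total (total by the displayed sequent above), the natural next step is to invoke the categorical interpretation of such a predicate as a genuine morphism. In a coherent category, a subobject $\theta \hookrightarrow X \times N$ that is total ($X \models \exists y\,\theta(x,y)$) and single-valued is precisely the graph of a unique arrow $f : X \to N$ with $X \models \varphi(x,f(x))$. Concretely, I would show that the composite $\theta \hookrightarrow X \times N \xrightarrow{\pi_X} X$ is both monic (from single-valuedness) and a cover (from totality), hence an isomorphism by the coherent structure; then $f := \pi_N \circ (\pi_X)^{-1}$ is the desired function, and $X \models \theta(x,f(x))$, in particular $X \models \varphi(x,f(x))$.

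The main obstacle I anticipate is the passage from an internally total, single-valued subobject to an actual morphism $f$: this is where the categorical framework must be used carefully, since it is not a purely logical manipulation but relies on the exactness properties of a coherent category (covers are stable, and a monic cover is an iso in the regular/coherent setting). I would need to be explicit that $\pi_X$ restricted to $\theta$ is an \emph{effective} epimorphism (image factorisation shows it is a cover precisely because totality says its image is all of $X$), and that monicity follows from single-valuedness via the pullback of $\theta$ against itself over $X$. Everything else — the triviality of totality and the trichotomy argument for single-valuedness — is routine given Lemmas~\ref{lem:sigmin} and~\ref{lem:compbduniqt}.
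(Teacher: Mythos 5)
Your proposal is correct and follows essentially the same route as the paper: apply Lemma~\ref{lem:sigmin} to get totality of $\varphi(x,y) \wedge \buq zy\qsi\varphi(x,z)$, observe that this predicate is single-valued, and invoke the standard correspondence between total single-valued relations and morphisms in a regular (hence coherent) category. The paper simply asserts functionality and cites the literature for the relation-to-morphism step, whereas you spell out the trichotomy argument for single-valuedness and the monic-cover-is-iso extraction of $f$; both of these added details are sound.
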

\begin{proof}
  \Cref{lem:sigmin} suggests that we have
  \[ X \models \exists y(\varphi(x,y) \wedge \buq zy\qsi\varphi(x,z)). \]
  This implies that $\varphi(x,y) \wedge \buq zy\qsi\varphi(x,z)$ is a \emph{functional} relation on $X \times N$. i.e. the value $y$ exists for any $x$, and this existence is \emph{unique}. In any coherent category, there is a one-to-one correspondence between functional relations and morphisms between objects. This is called \emph{functional completeness}; see~\citet[D1.3.12]{johnstone2002sketches}. Hence there exists a $f : X \to N$ that computes the above uniquely determined $y$.
\end{proof}

Recall that for $\varphi(x,n)$ a subobject of $X \times N$, $X \models \exists y\varphi(x,y)$ iff the composite map from $\varphi$ to $X$ is a regular epimorphism,
\[
\begin{tikzcd}
  \varphi \ar[d, hook] \ar[dr, two heads] \\
  X \times N \ar[r, "{\pi_X}"'] & X \ar[ul, bend right, dashed, "f"']
\end{tikzcd}
\]
Now if $\varphi$ is complemented, \Cref{cor:sigmachoice} implies that there exists a \emph{section} of this epimorphism $\varphi \surj X$. This is the reason why it is called a choice principle.

Although \Cref{def:bduniqt} has only specified bounded universal quantification for complemented objects, we can in fact show that $N$ in $\mc C$ will validate a form of \emph{$\Sigma_1$-collection}, hence bounded universal quantification can be applied to any $\Sigma_1$-subobject as well:

\begin{lemma}[$\Sigma_1$-collection]\label{lem:sigcollec}
  If $\varphi$ is a complemented subobject of $X \times N^2$, then the formula $\exists z \buq{i}{y} \beq{j}{z} \varphi(x,i,j)$ will satisfy the universal property of $\buq iy \exists z\varphi(x,i,z)$, i.e. for any subobject $\psi(x,y)$ of $X \times N$, we have
  \begin{align*}
    X \times N & \models \psi(x,y) \vdash \exists z\buq iy\beq jz\varphi(x,i,j) \\
    \eff X \times N^2 & \models \psi(x,y) \wedge i < y \vdash \exists z\varphi(x,i,z).
  \end{align*}
\end{lemma}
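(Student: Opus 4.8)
The plan is to prove the two directions of the biconditional separately. The forward direction is immediate: assuming $X \times N \models \psi(x,y) \vdash \exists z\buq iy\beq jz\varphi(x,i,j)$, the antecedent $\psi(x,y) \wedge i < y$ first yields some $z$ with $\buq iy\beq jz\varphi(x,i,j)$. Since $\varphi$ is complemented, $\beq jz\varphi(x,i,j)$ is decidable, so Lemma~\ref{lem:compbduniqt} applies; applying its elimination direction to the identity sequent on $\buq iy\beq jz\varphi(x,i,j)$ gives $\buq iy\beq jz\varphi(x,i,j) \wedge i < y \vdash \beq jz\varphi(x,i,j)$, and hence $\exists z\varphi(x,i,z)$, as required.

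The reverse direction is the genuine content, and I would obtain it from the induction rule (IndR) of Lemma~\ref{lem:indpno}. The difficulty is that the natural induction hypothesis is the \emph{implication} ``$u \le y$ entails $\exists z\buq iu\beq jz\varphi(x,i,j)$'', which is not expressible in coherent logic. I would circumvent this by exploiting the decidability of the order: writing $\Phi(x,y,u) := \exists z\buq iu\beq jz\varphi(x,i,j)$, I instead prove the \emph{disjunctive} sequent
\[ X \times N^2 \models \psi(x,y) \vdash \Phi(x,y,u) \vee y < u \]
by (IndR) on the variable $u$, taking $(x,y)$ as the parameters and $\psi(x,y)$ as the side hypothesis, which is legitimate since it does not mention $u$. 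Instantiating the resulting conclusion at $u := y$ and discharging the impossible disjunct $y < y$ then delivers $\psi(x,y) \vdash \Phi(x,y,y)$, which is exactly the sequent we want.

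It then remains to check the two premises of (IndR). The base case $u = \ms 0$ holds because $\buq i{\ms 0}(-)$ is vacuously $\top$, so $\Phi(x,y,\ms 0)$ is witnessed by $z = \ms 0$. For the inductive step I would distribute over the disjunction in the hypothesis $\Phi(x,y,u) \vee y < u$: the disjunct $y < u$ immediately gives $y < \ms su$, while under $\Phi(x,y,u)$ I split on the decidable alternative $y < \ms su$ versus $\ms su \le y$. In the former case the right disjunct holds; in the latter case $u < y$, so the lemma's hypothesis $\psi(x,y) \wedge u < y \vdash \exists z\varphi(x,u,z)$ supplies a witness $z'$ for the new index $i = u$. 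Combining the bound $z$ coming from $\Phi(x,y,u)$ with $z'$ into a common bound (for instance $z + z' + \ov 1$), using monotonicity of $\beq{j}{(-)}$ in its bound together with the arithmetic fact $i < \ms su \dashv\vdash i < u \vee i = u$, and then reapplying the introduction direction of Lemma~\ref{lem:compbduniqt}, produces a single witness for $\Phi(x,y,\ms su)$.

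The main obstacle is conceptual rather than computational: since coherent logic lacks implication and unbounded universal quantification, the induction principle cannot be applied to the expected conditional statement directly. The key move is the replacement of the unavailable implication ``$u \le y \to \Phi$'' by the coherent disjunction $\Phi \vee y < u$, which is available precisely because $<$ is a decidable relation on $N$; the remaining effort is the routine bookkeeping of merging the two witness bounds in the inductive step.
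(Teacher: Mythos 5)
Your proof is correct, but it takes a genuinely different route from the paper. The paper handles the right-to-left direction by first invoking $\Sigma_1$-Choice (Corollary~\ref{cor:sigmachoice}) to extract a section $f$ with $\varphi(x,i,f(x,i,y))$ on $\psi(x,y) \wedge i<y$, and then defining by internal primitive recursion an accumulator $g$ that takes the running maximum of the values $f(x,i,y)$ for $i<y$; the term $g(x,y,y)$ then serves as an explicit bound witnessing $\exists z\buq iy\beq jz\varphi(x,i,j)$. You instead stay entirely inside the internal sequent calculus: you run (IndR) on an auxiliary variable $u$ against the formula $\Phi(x,y,u) \vee y<u$, which encodes the non-coherent conditional ``$u\le y$ implies $\Phi(x,y,u)$'' as a disjunction using decidability of $<$, merge the old bound with the new witness in the successor step, and instantiate at $u:=y$. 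Both arguments are sound. The paper's version buys an explicit primitive recursive bounding term (a definable Skolem-type function for the collection instance), at the cost of routing through functional relations and the choice corollary; your version is more elementary and self-contained, needing only (IndR), trichotomy, and Lemma~\ref{lem:compbduniqt}, and its key device --- replacing an implication by a disjunction with a decidable guard --- is in fact the same trick the paper itself uses in the proof of Lemma~\ref{lem:leftind}. One small point worth making explicit in a write-up: in the successor step the ``witness $z'$'' is not chosen but introduced as a fresh variable under the existential-quantifier rule, and then both $z$ and $z'$ are discharged via Frobenius after the combined bound $z+z'+\ov 1$ has been formed; as you note, this is routine coherent-logic bookkeeping.
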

\begin{proof}
  The left to right is trivial, and we only need to show the other direction. From \Cref{cor:sigmachoice}, there exists a section $f$ from $\psi(x,y) \wedge i < y$ to $N$, such that
  \[ X \times N^2 \models \psi(x,y) \wedge i < y \vdash \varphi(x,i,f(x,i,y)). \]
  We may then recursively define a function $g$ from $\psi(x,y) \times N$ to $N$,
  \[
    \begin{cases}
      g(x,y,\ms 0) = \ms 0 \\
      g(x,y,i+1) =
                    \begin{cases}
                      \max\set{g(x,y,i),f(x,i,y)} & i < y \\
                      g(x,y,i) & y \le i
                    \end{cases}
    \end{cases}
  \]
  $g$ is well-defined, since $f(x,i,y)$ is only applied when $\psi(x,y) \wedge i < y$ holds. Intuitively, $g$ collects the maximal value of $f(x,i,y)$, and thus we have
  \[ \psi(x,y) \wedge i < y \models f(x,i,y) \le g(x,y,i). \]
  This way, it is easy to see that
  \[ X \times N \models \psi(x,y) \vdash \buq i y\beq j{\ms sg(x,y,i)}\varphi(x,i,j), \]
  and this completes the proof.
\end{proof}

\begin{corollary}\label{cor:sigmabuq}
  We can internally construct the bounded universal quantification for any $\Sigma_1$-subobjects in a pr-coherent category.
\end{corollary}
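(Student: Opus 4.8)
The plan is to reduce the statement to the $\Sigma_1$-collection principle of Lemma~\ref{lem:sigcollec}, which already carries the essential content; what remains is to package it correctly and to keep track of decidability. Spelling out the intended reading: given a $\Sigma_1$-subobject $\psi(x,y)$ of $X \times N$ (absorbing any further parameters into $X$), I want to produce a $\Sigma_1$-subobject $\buq iy\psi(x,i)$ of $X \times N$ satisfying the universal property of bounded universal quantification, namely that for every subobject $\chi(x,y)$ of $X \times N$,
\[ X \times N \models \chi(x,y) \vdash \buq iy\psi(x,i) \quad\eff\quad X \times N^2 \models \chi(x,y) \wedge i < y \vdash \psi(x,i). \]

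First I would bring $\psi$ into the normal form $\psi(x,y) \dashv\vdash \exists z\varphi(x,y,z)$ with $\varphi$ a \emph{single} decidable subobject of $X \times N \times N$. By definition a $\Sigma_1$-subobject has the shape $\exists\ov z\varphi(x,y,\ov z)$ with $\varphi$ decidable, so the first task is to contract the string $\ov z$ of existentials into one. For this I would use a primitive recursive pairing $\pair{-,-} : N \times N \to N$ with its projections, all definable internally by Lemma~\ref{lem:pnoprimc}; precomposing $\varphi$ with the induced map $z \mapsto (\dots,\pi_1 z,\pi_2 z,\dots)$ yields a subobject $\varphi'$, and since complemented subobjects are stable under pullback (the inverse image functor preserves $\vee$ and $\bot$, so $\qsi\varphi$ pulls back to the complement of the pullback of $\varphi$), $\varphi'$ is again decidable and $\exists\ov z\varphi \dashv\vdash \exists z\varphi'$. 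Iterating reduces to a single existential. With the normal form in hand I would simply set
\[ \buq iy\psi(x,i) := \exists z\,\buq iy\beq jz\varphi(x,i,j), \]
and invoke Lemma~\ref{lem:sigcollec}: that lemma states precisely that this formula satisfies the universal property of $\buq iy\exists z\varphi(x,i,z)$, which by the normal form is exactly the bounded universal quantification of $\psi$. Finally I would check that the construction stays inside the $\Sigma_1$ class: since $\varphi$ is complemented, the remark following Definition~\ref{def:bduniqt} gives that $\beq jz\varphi(x,i,j)$ is complemented, and then $\buq iy\beq jz\varphi(x,i,j)$ is complemented as well, so $\exists z(\cdots)$ is a genuine $\Sigma_1$-subobject.

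The main obstacle is bookkeeping rather than a new idea, as the substantive work is already contained in Lemma~\ref{lem:sigcollec}. The two points that genuinely require care are (i) verifying that decidability survives both the pullback along the pairing map and the two successive layers of bounded quantification, since without this the construction would leave the $\Sigma_1$ class; and (ii) ensuring the quantifier-contraction step preserves equivalence, for which one needs only the defining equations $\pi_i\pair{z_1,z_2} = z_i$ of the pairing and \emph{not} its surjectivity, because the witnesses for the backward direction are read off directly by the projections. Once these are in place the corollary follows immediately from $\Sigma_1$-collection.
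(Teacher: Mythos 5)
Your proposal is correct and follows the same route as the paper, which simply cites Definition~\ref{def:bduniqt} and Lemma~\ref{lem:sigcollec} as a direct consequence; you have merely spelled out the bookkeeping (contracting the existential string via pairing and tracking decidability) that the paper leaves implicit. No gaps.
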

\begin{proof}
  Direct consequence of \Cref{def:bduniqt} and \Cref{lem:sigcollec}.
\end{proof}

\begin{remark}
  A relevant comparison for the above result is the well-known fact that universal quantification over \emph{Kuratowski finite} object remains geometric; cf.~\citet{vickers1999topical,johnstone1978finiteness}.
\end{remark}

We can now fulfil the promise made in \Cref{rem:coherentT}, and briefly sketch that any formula in $\T$ will be provably equivalent, modulo $\T$, to a coherent formula. From a semantic perspective, this holds because the syntactic category of $\T$ (\Cref{def:syncatcoharith}) will be pr-coherent (\Cref{thm:ctinitial}), hence all the constructions in this section applies to $\T$. From a proof-theoretic perspective, this can be more directly observed by the fact that all the semantic development in this section only involves universal properties of PNO in a coherent category. By inspecting the proof, it is evident that the same argument can be carried out in $\T$ using the rules for coherent logic together with the induction principle. It suffice to show the following fact:

\begin{proposition}[A coherent axiomatisation of $\T$]\label{rem:tiscoherent}
  For any coherent formula $\varphi(\ov x,y)$, $\buq{y}{\ms t}\varphi(\ov x,y)$ will be equivalent over $\T$ to a coherent formula. 
\end{proposition}
\begin{proof}[Proof Sketch]
  Notice that in $\T$, every sequence of existential quantifiers $\exists \ov z\psi$ (including the empty one) can be equivalently replaced by a single existential quantifier $\exists z\psi'$, since we can encode lists of natural numbers; cf. \Cref{sec:init}. Recall that every coherent formula $\varphi(\ov x,y)$ is a finite disjunction of formulas $\exists\ov z\psi_i$, where each $\psi_i$ is a finite conjunctions of atomic formulas; cf.~\citet[D1.3.8]{johnstone2002sketches}. Hence, every coherent formula $\varphi(\ov x,y)$ in $\T$ is provably equivalent to one of the form $\exists z\psi(\ov x,y,z)$, with $\psi(\ov x,y,z)$ a finite disjunction of finite conjunctions of atomic formulas. 

  Now notice that atomic formulas in $\T$ are all of the form $\ms s = \ms t$ or $\ms s < \ms t$ for terms $\ms s,\ms t$, which are \emph{complemented}. Hence, $\psi(\ov x,y,z)$, as a finite disjunction of finite conjunctions of them, is also complemented. According to (the proof of) \Cref{lem:sigcollec}, $\buq y{\ms t}\exists z\psi(\ov x,y,z)$, is equivalently $\exists z\buq y{\ms t}\beq jz\psi(\ov x,y,j)$. By (the proof of) \Cref{lem:compbduniqt}, $\buq y{\ms t}\beq jz\psi(\ov x,y,j)$ is furthermore equivalent to an atomic formula in $\T$. This way, $\buq y{\ms t}\varphi(\ov x,y)$ can be replaced by an equivalent coherent formula over $\T$.
\end{proof}

\subsection{Stability under base change}
To summarise, we have shown in this section that for a PNO in a coherent category $\mc C$: (a) We can construct primitive recursive functions internally; (b) The induction principles (IndR) and (IndL) are valid for the PNO in $\mc C$; (c) We can compute the bounded $\mu$-operator for any complemented subobject; (d) We can construct bounded universal quantification for any $\Sigma_1$-subobjects.

To show $\T$ presents the initial pr-coherent arithmetic, we also need to show that all these constructions will be \emph{preserved} by appropriate functors. They are referred to as \emph{pr-coherent functors}:

\begin{definition}[pr-coherent functor]\label{def:prcoherent}
  Let $(\mc C,N)$ and $(\mc D,M)$ be two pr-coherent categories. A \emph{pr-coherent functor} $F : \mc C \to \mc D$ is a coherent functor that preserves the PNO.
\end{definition}

We stress that for a functor to preserve the PNO, just like asking the functor to preserve any other types of universal structures in a category, it is \emph{not} enough to simply require that $F(N) \cong M$, but it should also preserve the structures $\ms 0$ and $\ms s$,
\[
\begin{tikzcd}
  F(1) \ar[d, "{\cong}"'] \ar[r, "{F\ms 0}"] & FN \ar[d, "{\cong}"] \\
  1 \ar[r, "{\ms 0}"'] & M
\end{tikzcd} \quad\quad
\begin{tikzcd}
  F(N) \ar[d, "{\cong}"'] \ar[r, "{F\ms s}"] & FN \ar[d, "{\cong}"] \\
  M \ar[r, "{\ms s}"'] & M
\end{tikzcd}
\]
Notice that any coherent functor $F$ preserves all finite products, and in particular the terminal object. We then observe that $F$ preserving the zero term and the successor function already guarantees it preserves all the other primitive recursive constructions:

\begin{lemma}\label{lem:pr-pr}
  Let $F : (\mc C,N) \to (\mc D,M)$ be a pr-coherent functor. Then for any $f : A \to X$ and $g : X \to X$, the following diagram commutes,
  \[
  \begin{tikzcd}
    FA \times FN \cong F(A \times N) \ar[r, "{F\rec_{f,g}}"] \ar[d, "{\cong}"'] & FX \ar[d, equal] \\
    FA \times M \ar[r, "{\rec_{Ff,Fg}}"'] & FX
  \end{tikzcd}
  \]
\end{lemma}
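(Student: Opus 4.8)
The plan is to use that $\rec_{Ff,Fg}$ is characterised by a \emph{uniqueness} clause in the universal property of the PNO $M$ in $\mc D$, and to verify that the transported map $F\rec_{f,g}$ — read across the canonical isomorphism $F(A\times N)\cong FA\times M$ — satisfies exactly the defining diagram of $\rec_{Ff,Fg}$. Once this is done, the two maps must coincide, which is precisely the claim.

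First I would apply $F$ to the diagram defining $\rec_{f,g}$ in $\mc C$. Since $F$ is a functor, it preserves commutativity of diagrams, so we obtain at once a commuting diagram in $\mc D$ with $F\pair{\id,\ms 0}$, $F(\id\times\ms s)$, $Ff$, $Fg$ and $F\rec_{f,g}$ replacing the corresponding arrows. The next step is to rewrite this transported diagram entirely in terms of $FA\times M$, and this is where the hypotheses on $F$ are used. Because $F$ is coherent it preserves finite products, so the canonical isomorphism $F(A\times N)\cong FA\times FN$ is natural and identifies $F\pair{\id,\ms 0}$ with $\pair{\id_{FA},F\ms 0}$ and $F(\id\times\ms s)$ with $\id_{FA}\times F\ms s$. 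Because $F$ is moreover \emph{pr-coherent}, the two squares displayed just before the lemma give $F\ms 0=\ms 0$ and $F\ms s=\ms s$ under the isomorphism $FN\cong M$. Substituting these identities, the transported diagram becomes
\[
  \xymatrix{
    FA \ar[dr]_{Ff} \ar[r]^{\pair{\id,\ms 0}} & FA \times M \ar[d]^{F\rec_{f,g}} & FA \times M \ar[l]_{\id \times\ms s} \ar[d]^{F\rec_{f,g}} \\
    & FX & FX \ar[l]^{Fg}
  }
\]
which is exactly the diagram whose unique filler, by the universal property of $M$, is $\rec_{Ff,Fg}$.

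It then follows from the uniqueness clause of the PNO $M$ in $\mc D$ that $F\rec_{f,g}=\rec_{Ff,Fg}$ modulo the canonical isomorphism, as required. I expect the only real work to lie in the bookkeeping of the coherence isomorphisms in the second step, i.e. in checking that $F\pair{\id,\ms 0}$ genuinely corresponds to $\pair{\id_{FA},\ms 0}$ and $F(\id\times\ms s)$ to $\id_{FA}\times\ms s$ once all the product-preservation isomorphisms are inserted and their naturality is invoked. This verification is routine — it amounts to the standard fact that a product-preserving functor respects the formation of pairings and products of maps — so I would carry it out but not belabour it.
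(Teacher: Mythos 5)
Your proposal is correct and follows essentially the same route as the paper: apply $F$ to the defining diagram of $\rec_{f,g}$, transport along the product-preservation isomorphism and the identifications $F\ms 0 \cong \ms 0$, $F\ms s \cong \ms s$, and conclude by the uniqueness clause for $\rec_{Ff,Fg}$. The only difference is that you spell out the bookkeeping of the coherence isomorphisms slightly more explicitly than the paper does.
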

\begin{proof}
  Apply the functor $F$ to the diagram of construction of $\rec_{f,g}$ and composing with the isomorphism between $FN$ and $M$, we obtain
  \[
  \begin{tikzcd}
    & FA \times M \ar[d, "{\cong}"] & FA \times M \ar[l, "{\id \times \ms s}"'] \ar[d, "{\cong}"] \\
    FA \ar[ur, "{\pair{\id,\ms 0}}"] \ar[dr, "{Ff}"] \ar[r, "{\pair{\id,F\ms 0}}"] & FA \times FN \ar[d, "{F\rec_{f,g}}"] & FA \times FN \ar[l, "{\id \times F\ms s}"'] \ar[d, "{F\rec_{f,g}}"] \\
    & FX & FX \ar[l, "g"]
  \end{tikzcd}
  \]
  The upper diagram commutes because $F$ preserves $\ms 0$ and $\ms s$. Now the desired property follows from the uniqueness of the function $\rec_{Ff,Fg}$.
\end{proof}

Besides the primitive recursive structure, $F$ also preserves the logical structure of a subobject being complemented or $\Sigma_1$. The former is because $F$ by definition would preserve the distributive lattice structure of the subobject lattice, and complements are uniquely determined by \emph{equational} properties: $\varphi \wedge \qsi\varphi = \bot$ and $\varphi \vee \qsi\varphi = \top$. The preservation of $\Sigma_1$-subobjects is then implied by the fact that $F$ further preserves existential quantification by definition of being coherent. The upshot is that a pr-coherent functor will preserve everything that we have introduced in this section:

\begin{proposition}[Stability under pr-coherent functors]\label{lem:stability}
  Take a pr-coherent functor $F : (\mc C,N) \to (\mc D,M)$. It preserves primitive recursion, bounded $\mu$-operator, and bounded universal quantification for $\Sigma_1$-subobjects.
\end{proposition}
\begin{proof}
  Recall the construction of bounded $\mu$-operator for complemented subobjects in \Cref{def:mu} from primitive recursion. We first note that $F$ preserves the character $\ms c_{\varphi}$ of a complemented subobject $\varphi$ of $X$, in the sense that $F\ms c_{\varphi} \cong \ms c_{F\varphi}$. This is simply because $F$ being coherent preserves the disjoint coproduct $X \cong \varphi \sqcup \qsi\varphi$, as well as $\ms 0$ and $\ms s$. Then it also preserves $\mu_{\varphi}$ by \Cref{lem:pr-pr}. Since bounded universal quantification of a complemented subobject is defined by bounded $\mu$-operator from \Cref{def:bduniqt}, it is preserved by $F$ as well. Finally, in \Cref{lem:sigcollec} we have reduced bounded universal quantification of $\Sigma_1$-subobjects to the bounded universal quantification for complemented objects, hence it is also preserved by $F$.
\end{proof}

\section{Initiality of coherent arithmetic}\label{sec:init}

The aim of this section is to show that the syntactic category $\mc C[\T]$ of our coherent arithmetic $\T$ is \emph{initial} among pr-coherent categories. The main point we need to verify is the existence of a PNO in $\mc C[\T]$. Once we have shown that, the initiality of $\mc C[\T]$ follows quite easily from our semantic analysis in \Cref{sec:pno} and how the theory $\T$ of coherent arithmetic is constructed in \Cref{def:coharith}.

We start with the definition of the syntactic category $\mc C[\T]$:

\begin{definition}[Syntactic category of coherent arithmetic]\label{def:syncatcoharith}
  The syntactic category $\mc C[\T]$ of coherent arithmetic $\T$ is a category with
  \begin{itemize}
  \item Objects: formulas with \emph{contexts} $\varphi(\ov x)$, identified up to $\alpha$-equivalence;
  \item Morphisms: $\theta : \varphi(\ov x) \to \psi(\ov y)$ is a provably functional formula $\theta(\ov x,\ov y)$ with domain $\varphi(\ov x)$ and codomain $\psi(\ov y)$, up to $\T$-provable equivalence.
  \end{itemize}
\end{definition}

Let us explain the above definition in greater detail. A formula with a context $\varphi(\ov x)$ is a formula $\varphi$, plus a list of variables $\ov x$ that contains all the free variables in $\varphi$. For instance, $\top()$, $\top$ with the empty context, would be an object in $\mc C[\T]$. $\top$ equipped with different contexts are considered as \emph{different} objects, say $\top (x)$ would be different from $\top()$. However, we only identify them up to $\alpha$-equivalence, viz. up to renaming of bound variables and substitution of free variables. This means $x = x(x)$ would be considered as the \emph{same} object as $y = y(y)$. This way, we can always assume two different object $\varphi(\ov x)$ and $\psi(\ov y)$ have \emph{disjoint} contexts, and we will assume this whenever we have chosen different variable names $x,y$ for them.

The object $\top(x)$ is special in $\mc C[\T]$, and henceforth we will denote it as $N$. Similarly, $\top(\ov x)$ for a list of variables of length $n$ will be denoted as $N^n$. They are indeed the $n$-fold product of $N$ in $\mc C[\T]$. When $n = 0$, we have the object $\top()$, which is terminal in $\mc C[\T]$. With a bit of abuse of notation, we often omit mentioning the empty context and simply write it as $\top$. As the name suggests, our main goal is to show that $N$ is a PNO in $\mc C[\T]$.

A morphism from $\varphi(\ov x)$ to $\psi(\ov y)$ is itself a formula $\theta(\ov x,\ov y)$, such that $\T$ proves its functionality with domain $\varphi(\ov x)$ and codomain $\psi(\ov y)$:
\begin{align*}
  \theta(\ov x,\ov y) &\vdash_{\ov x,\ov y} \varphi(\ov x) \wedge \psi(\ov y) \\
  \varphi(\ov x) &\vdash_{\ov x} \exists\ov y\theta(\ov x,\ov y) \\
  \theta(\ov x,\ov y) \wedge \theta(\ov x,\ov z) &\vdash_{\ov x,\ov y,\ov z} \ov y = \ov z
\end{align*}

More precisely, a morphism from $\varphi(\ov x)$ to $\psi(\ov y)$ are determined only up to $\T$-provable equivalence. This means if we have another provably functional formula $\sigma(\ov x,\ov y)$ from $\varphi(\ov x)$ to $\psi(\ov y)$, and $\T$ proves
\[ \theta(\ov x,\ov y) \dashv\vdash_{\ov x,\ov y} \sigma(\ov x,\ov y), \]
then $\theta(\ov x,\ov y)$ and $\sigma(\ov x,\ov y)$ will be considered as the \emph{same} morphism in $\mc C[\T]$.

Composition is constructed as follows: Given $\theta : \varphi(\ov x) \to \psi(\ov y)$ and $\sigma : \psi(\ov y) \to \chi(\ov z)$, the composition of $\theta$ and $\sigma$ is defined to be the following morphism,
\[ \sigma \circ \theta := \exists \ov y(\theta(\ov x,\ov y) \wedge \sigma(\ov y,\ov z)). \]
One can show that this is well-defined, and $\mc C[\T]$ forms a category under this construction. As an easy example, any term $t(\ov x)$ will induce a morphism from $N^n$ to $N$, because the formula $t(\ov x) = y$ is evidently provably functional. Composition of these morphisms are equivalent to substitution of terms. 

The above construction of syntactic category actually works for arbitrary coherent theories, not just the coherent arithmetic $\T$. And through this construction, categorical logic identifies a close relationship between coherent categories and coherent logic. The following result is well-known:

\begin{proposition}\label{prop:cohsyn}
  The syntactic category of any coherent theory is a coherent category. In particular, $\mc C[\T]$ is a coherent category.
\end{proposition}
\begin{proof}
  The result for coherent theories is well-known; cf.~\citet[D1.4]{johnstone2002sketches}. Though as mentioned in \Cref{rem:coherentT} our construction of $\T$ strictly speaking does not make it a coherent category, the proof in \emph{loc. cit.} relies only on the fact that a theory has the logical connectives in coherent logic and they satisfy the usual provability conditions. For instance, it suffices for the theory $\T$ to have conjunction and equality for $\mc C[\T]$ to be finitely complete; it suffices for $\T$ to further have existential quantifier for $\mc C[\T]$ to be regular; and finally, it suffices for $\T$ to have disjunction for $\mc C[\T]$ to be coherent. This is in particular true for $\T$.
\end{proof}

Thus, to show $\mc C[\T]$ is a pr-coherent category, we only need to prove that $N$ is a PNO in $\mc C[\T]$. As mentioned previously, the terms $\ms 0$ and $\ms s(x)$ already gives us morphisms $\ms 0 : \top \to N$ and $\ms s : N \to N$. We then need to show that $N$ equipped with these morphisms satisfies the universal property of PNO, and by definition we need to show that we can perform primitive recursion for morphisms internally in $\T$. 

As usual, this needs encoding of finite lists of numbers in $\T$. The exact detail of the coding does not matter. Here, we use the usual G\"odel coding of numbers through the $\beta$-function. We choose a bijective coding of pairs, 
\[ \#\pair{n,m} = \frac12(n+m)(n+m+1) + n. \]
This pairing function and its two inverses are evidently primitive recursive, hence there will be terms $\pri,\fst,\snd$ in $\T$ corresponding to these functions, and $\T$ can prove the following result,
\[ \top \vdash_{x,y} \fst(\pri(x,y)) = x, \quad \top \vdash_{x,y} \snd(\pri(x,y)) = y. \]
These imply that the following is derivable,
\[ \pri(x,y) = \pri(z,w) \vdash_{x,y,z,w} x = z \wedge y = w. \]
The $\beta$-function can then be defined as follows,
\[ \beta(x,i) :\equiv \rem(\fst(x),\ms s(\snd(x)\cdot\ms si)), \]
where $\rem$ is the remainder function. As usual, we will record the length of the sequence at the first entry, viz. we define
\[ \lh(x) :\equiv \beta(x,\ms 0). \]
We will also abbreviate the $i$-th entry of a sequence $(x)_i$ as below,
\[ (x)_i :\equiv \beta(x,\ms si). \]
We use these functions to show the following theorem:

\begin{proposition}\label{prop:ctprcoh}
  $N$ is a PNO in $\mc C[\T]$.
\end{proposition}
\begin{proof}[Proof Sketch]
  Consider maps $\gamma : \varphi \to \psi$ and $\theta : \psi \to \psi$ in $\mc C[\T]$. For simplicity, we assume the contexts of $\varphi,\psi$ are single variables. To show $N$ is a PNO in $\mc C$, we need to construct a unique map $\rec_{\gamma,\theta} : \varphi \times N \to \psi$, and we define it to be the following formula,
  \begin{align*}
    \rec_{\gamma,\theta}(x,n,y) :\equiv \exists l(\lh(l) = \ms sn \wedge \gamma(x,(l)_{\ms 0}) \wedge \buq un\theta((l)_u,(l)_{\ms su}) \wedge (l)_n = y).
  \end{align*}
  We indicate what we need to show. Firstly, we need to prove $\rec_{\gamma,\theta}$ is a well-defined morphism. This means it should respect the domain and codomain,
  \[ \rec_{\gamma,\theta}(x,n,y) \vdash_{x,n,y} \varphi(x) \wedge \psi(y), \]
  and it should have a provably unique value,
  \[ \varphi(x) \vdash_{x,n} \exists y\rec_{\gamma,\theta}(x,n,y), \quad \rec_{\gamma,\theta}(x,n,y) \wedge \rec_{\gamma,\theta}(x,n,z) \vdash_{x,n,y,z} y = z. \]
  Respecting the domain and codomain and the uniqueness of value can simply be proven by a case distinction on $n$, and using the corresponding properties of $\gamma$ and $\theta$. The existence of value can be proven by induction on $n$. When verifying these facts, we will need certain basic operations on lists like concatenation, and finding the code of a list consisting of a single number. But these operations are primitive recursive, hence exist in $\T$.
  
  The definition of $\rec_{\gamma,\theta}$ should make it clear that the following diagram commutes,
  \[
  \begin{tikzcd}
    \varphi(x) \ar[dr, "{\gamma}"'] \ar[r, "{\pair{\id,\ms 0}}"] & \varphi(x) \times N \ar[d, "{\rec_{\gamma,\theta}}"] & \varphi(x) \times N \ar[l, "{\id \times \ms s}"'] \ar[d, "{\rec_{\gamma,\theta}}"] \\
    & \psi(y) & \psi(y) \ar[l, "{\theta}"]
  \end{tikzcd}
  \]
  and we need to verify the uniqueness of the morphism $\rec_{\gamma,\theta}$, in the sense that for any other morphism $\sigma$ from $\varphi \times N$ to $\psi$ that makes the above diagram commute, $\sigma$ and $\rec_{\gamma,\theta}$ will be provably equivalent in $\T$. There should be no problem for the reader familiar with basic proof theory in arithmetic to realise the validity of the previously claimed fact, as they are well-known consequences of $\Sigma_1$-induction. We have recorded the full proof in \ref{app}.
\end{proof}

As an example, we can verify that our function symbols in \emph{PrimRec} in $\T$ are indeed computing internal primitive recursions in the following sense:

\begin{example}
  Given an $n$-ary $\ms g$ and $n\!+\!2$-ary $\ms h$ in \emph{PrimRec}, let $\ms f$ be $\mathrm{Pr}[\ms g,\ms h]$. By assumption, $\T$ contains the following axioms,
  \[ \top \vdash_{\ov x} \ms f(\ov x,\ms 0) = \ms g(\ov x), \quad \top \vdash_{\ov x,y} f(\ov x,\ms sy) = \ms h(\ov x,y,\ms f(\ov x,y)). \]
  These axioms makes it clear that the following diagram commutes,
  \[
  \begin{tikzcd}
    N^n \ar[r, "{\pair{\id,\ms 0}}"] \ar[dr, "{\pair{\id,\ms 0,\ms g}}"'] & N^{n+1} \ar[d, "{\pair{\id,\ms f}}"] & N^{n+1} \ar[l, "{\id \times \ms s}"'] \ar[d, "{\pair{\id,\ms f}}"] \\
    & N^{n+2} & N^{n+2} \ar[l, "{\pair{\pi,\ms h}}"]
  \end{tikzcd}
  \]
  where $\pi : N^{n+2} \to N^{n+1}$ projects the first $n+1$ entries. By uniqueness of $\rec$, $\ms f$ will be the same function that constructed out of the PNO structure of $N$ as specified by \Cref{lem:pnoprimc}.
\end{example}

As mentioned before, Proposition~\Ref{prop:ctprcoh} almost immediately implies one of the main results of this paper:

\begin{theorem}\label{thm:ctinitial}
  $\mc C[\T]$ is the \emph{initial} pr-coherent category.
\end{theorem}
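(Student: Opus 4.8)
The plan is to show that $\mc C[\T]$ is initial by exhibiting, for an arbitrary pr-coherent category $(\mc D, M)$, an essentially unique pr-coherent functor $F : \mc C[\T] \to \mc D$. First I would recall the universal property of a syntactic category: pr-coherent functors out of $\mc C[\T]$ correspond bijectively (up to natural isomorphism) to \emph{models} of $\T$ in $\mc D$, i.e. interpretations of the vocabulary of $\T$ that validate all its axioms and rules. Since $\mc D$ has a PNO $M$, and by Lemma~\ref{lem:ctprcoh} and the whole of Section~\ref{sec:pno} every piece of structure appearing in the vocabulary of $\T$ is universally available in $(\mc D, M)$, there is a \emph{canonical} candidate model: interpret the object $N$ as $M$, the constant $\ms 0$ and successor $\ms s$ as the PNO structure of $M$, each function symbol in \emph{PrimRec} as the corresponding internally-constructed primitive recursive map (using Lemma~\ref{lem:pnoprimc}), and the relation $<$ by its standard decidable subobject. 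This forces $F$ on objects and generating morphisms, giving uniqueness essentially for free.

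The substantive task is \emph{existence}: I must verify that this canonical interpretation is genuinely a model of $\T$, i.e. that $\mc D$ validates every axiom and rule in Definition~\ref{def:coharith}. The defining equations of \emph{PrimRec} hold by construction of the internal primitive recursive functions; the successor axioms $\ms s x = \ms 0 \vdash \bot$ and $\ms s x = \ms s y \vdash x = y$ hold because $M$ is a PNO (so $\ms 0$ and $\ms s$ are disjoint complemented, and $\ms s$ is monic); the axiom for $<$ holds by its definition via addition. The bounded universal quantifier and its double rule are validated by Corollary~\ref{cor:sigmabuq} together with the universal property established in Lemma~\ref{lem:compbduniqt}. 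The induction rules (IndR) and (IndL) are validated by Lemma~\ref{lem:indpno} and Lemma~\ref{lem:leftind} respectively. Once all axioms and rules are checked sound in $\mc D$, the general theory of syntactic categories produces the coherent functor $F$, and Lemma~\ref{lem:stability} guarantees that, because $F$ sends $N$ to the PNO $M$ and preserves $\ms 0, \ms s$, it automatically preserves all derived primitive recursive structure; hence $F$ is pr-coherent, not merely coherent.

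Finally I would address \emph{essential uniqueness}. Any pr-coherent functor $G : \mc C[\T] \to \mc D$ must send $N$ to a PNO, and since PNOs are unique up to canonical isomorphism, $G(N) \cong M$ compatibly with $\ms 0$ and $\ms s$. By Lemma~\ref{lem:pr-pr}, $G$ then agrees with $F$ on every primitive recursive function symbol up to this isomorphism, and since these generate all the morphisms of $\mc C[\T]$, the two functors are canonically naturally isomorphic. This yields initiality in the appropriate $2$-categorical sense of uniqueness up to unique isomorphism.

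The main obstacle I expect is not any single calculation but rather the bookkeeping required to confirm that the correspondence between pr-coherent functors and models of $\T$ interacts correctly with the \emph{extra} structure we bolted onto $\T$ beyond ordinary coherent logic—namely the bounded quantifier and the two induction rules. For a plain coherent theory the functor-model correspondence is standard, but here one must check that these additional rules are \emph{closed} under the interpretation, i.e. that a model validating the hypotheses of each rule also validates its conclusion; this is exactly what the stability results of Section~\ref{sec:pno} (especially Lemma~\ref{lem:stability}) were designed to supply, so the difficulty is organizational rather than conceptual.
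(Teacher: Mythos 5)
Your proposal is correct and follows essentially the same route as the paper: construct the canonical model of $\T$ on the PNO $M$ of an arbitrary pr-coherent $\mc D$ using the semantic results of Section~\ref{sec:pno} (Lemmas~\ref{lem:pnoprimc}, \ref{lem:indpno}, \ref{lem:leftind}, Corollary~\ref{cor:sigmabuq}), obtain the induced coherent functor from the universal property of the syntactic category, upgrade it to a pr-coherent functor via Lemma~\ref{lem:stability}, and derive essential uniqueness from the fact that a pr-coherent functor's values are forced on the generators. The only cosmetic difference is that the paper states up front that Lemma~\ref{lem:ctprcoh} is what makes $\mc C[\T]$ a pr-coherent category in the first place, a point worth flagging explicitly since initiality is claimed within that class.
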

\begin{proof}
  Proposition~\Ref{prop:ctprcoh} implies that $\mc C[\T]$ is first of all a pr-coherent category. Combining \Cref{lem:pnoprimc}, \Cref{lem:indpno}, and \Cref{cor:sigmabuq}, it follows that any PNO $M$ in a coherent category $\mc D$ would consist of a \emph{model} of $\mc C[\T]$, hence providing a functor $F_M : \mc C[\T] \to \mc D$ mapping $N$ to $M$; cf.~\citet[D1.2]{johnstone2002sketches}. Now \Cref{lem:stability} ensures this functor is indeed coherent, hence $F_M$ is pr-coherent. For any other pr-coherent functor $F$, it must preserve the PNO structure. Since all the objects $\varphi(\ov x)$ in $\T$ are generated by primitive recursion and coherent logic, the value of $F$ is completely determined up to isomorphism, and we must have $F(\varphi(\ov x)) \cong F_M(\varphi(\ov x))$ for any $\varphi(\ov x)$ in $\mc C[\T]$, thus $F \cong F_M$. This proves the initiality of $\mc C[\T]$.
\end{proof}

Our next goal is to characterise the definable functions in the initial pr-coherent category, i.e. morphisms between powers of $N$ in $\mc C[\T]$. From \Cref{def:syncatcoharith}, they corresponds exactly to provably total functions in $\T$. In fact, they are all strongly $\Sigma_1$-representable, because all formulas in $\T$ are $\Sigma_1$ by construction.

From \Cref{thm:ctinitial}, we already know that this class of functions contain all primitive recursive functions, because they already appear as terms in $\T$. As mentioned in \Cref{sec:intro}, this class of functions turns out to be exactly primitive recursive functions, and the next section is devoted to the proof of this fact.

\section{Definable functions in the initial pr-coherent category}\label{sec:defin}

As mentioned in the introduction, our strategy is to define a category $\PriM$ of primitive recursive functions between recursively enumerable sets, and show it is a pr-coherent category. Recall that a subset $S$ of $\N^k$ is recursively enumerable if there exists a computable function whose range is $S$.\footnote{We view the nowhere defined function to be computable. This gives us the empty set $\emptyset$ as a recursively enumerable set.} The reason we are particularly interested in these class of sets is due to the following fundamental result; cf.~\citet[Prelimaries]{pudlak2017}:

\begin{proposition}
  A subset $S$ of $\N^k$ is recursively enumerable iff it can be defined by a $\Sigma_1$-formula, i.e. a formula in $\T$.
\end{proposition}

Thus in particular, the functor $\mc C[\T] \to \Set$ induced by the standard natural numbers $\N$ in $\Set$ will map any object in $\mc C[\T]$ to a recursively enumerable set.

To define a notion of primitive recursive functions between recursively enumerable sets, the following fact is crucial: Using Kleene's $T$-predicate, we can show that every recursively enumerable set, if non-empty, has a primitive recursive enumeration. More precisely, if $S \subseteq \N^k$ is recursively enumerable, then there exists a family $\set{s_i}_{1 \le i\le k}$ of primitive recursive functions, such that $S$ is the range of the function
\[ \pair{s_i}_{1 \le i\le k} : \N \to \N^k. \]
This function will be called a \emph{primitive recursive enumeration} of $S$, and this fact appears e.g. in~\citet{rosser1936extensions}. Starting from this, we may define the category $\PriM$ as follows:

\begin{definition}[Category of primitive recursive functions between recursively enumerable sets]\label{def:prim}
  The category $\PriM$ is defined as follows:
  \begin{itemize}
  \item Objects are either $\emptyset$, or pairs $(S,s)$, where $S$ is a non-empty recursively enumerable set, and $s : \N \to S$ is a chosen primitive recursive enumeration of $S$. We will write $s(n)$ also as $s_n$, to indicate we treat $n$ as an index for the element $s_n \in S$.
  \item Morphisms out of $\emptyset$ are unique, and no object maps into $\emptyset$ except for itself. A morphism from $(S,s)$ to $(T,t)$ is a function $f : S \to T$, such that there exists some \emph{primitive recursive} function $\qsi f$ making the following diagram commute,
    \[
    \begin{tikzcd}
      \N \ar[r, "s"] \ar[d, "{\qsi f}"'] & S \ar[d, "f"] \\
      \N \ar[r, "t"'] & T
    \end{tikzcd}
    \]
    In other words, morphisms in $\PriM$ are functions that are \emph{tracked} by some primitive recursive function on the level of codes. Composition of morphism $f,g$ will be the usual composition of functions $f \circ g$. This is well-defined because $f \circ g$ is evidently tracked by $\qsi f \circ \qsi g$.
  \end{itemize}
\end{definition}

According to the above definition, for any $f : (S,s) \to (T,t)$ in $\PriM$, the composition $f \circ s$ will be primitive recursive, because it is equal to $t \circ \qsi f$, which is primitive recursive. In particular, in $\PriM$, once we have chosen some bijective primitive recursive coding $x : \N \to \N^n$ with a primitive recursive inverse, morphisms from $(\N^n,x)$ to $(\N,\id)$ will be exactly the primitive recursive functions.\footnote{As we will see in \Cref{lem:primlex}, $(\N^n,x)$ will indeed be the $n$-fold product of $(\N,\id)$ in $\PriM$.}

There is an evident forgetful functor
\[ U : \PriM \to \Set. \]
It sends $\emptyset$ to the empty set, and sends $(S,s)$ to $S$. For morphisms, it sends the morphisms out of $\emptyset$ as the unique map out of the empty set, and for $f : (S,s) \to (T,t)$, $U$ forgets about the information that $f$ can be tracked by some primitive recursive function $\qsi f$ and sends it to the set-theoretic function $f : S \to T$.

Our goal is to show that $\PriM$ is a pr-coherent category, and this forgetful functor is a pr-coherent functor. One immediate observation is that $U$ is \emph{faithful}, hence it reflects monomorphisms. This means that if $f$ is \emph{injective}, then it is also a monomorphism in $\PriM$. 

\begin{lemma}\label{lem:primlex}
  $\PriM$ has finite limits, and $U$ preserves them.
\end{lemma}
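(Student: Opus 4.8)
The plan is to verify the three generating finite limits directly—a terminal object, binary products, and equalizers—and then invoke the standard fact that a category possessing these has all finite limits, and that a functor preserving all three preserves all finite limits. Throughout, a crucial simplification is that $U$ is faithful: a morphism of $\PriM$ is completely determined by its underlying function, so once a candidate limit is shown to have the correct underlying set and underlying universal maps in $\Set$, the \emph{uniqueness} half of each universal property is automatic, and it only remains to exhibit a primitive recursive \emph{tracker} witnessing that the relevant function is a morphism of $\PriM$. This also makes the preservation claim immediate: each limit I build will have underlying set the corresponding limit in $\Set$, with underlying projections, inclusions, and mediating maps the set-theoretic ones.

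For the terminal object I would take a one-element set with the constant enumeration, whose unique map from any $(S,s)$ is tracked by a constant function; maps out of $\emptyset$ are unique by definition, so the universal property holds on the nose. For the binary product of $(S,s)$ and $(T,t)$ I would fix a primitive recursive bijection $\N \cong \N \times \N$ with primitive recursive inverses $\fst,\snd$, and equip $S \times T$ with the enumeration $u(n) := (s_{\fst(n)}, t_{\snd(n)})$. This is a primitive recursive surjection onto $S \times T$, the projections are tracked by $\fst$ and $\snd$, and given $f : (R,r) \to (S,s)$ and $g : (R,r) \to (T,t)$ the mediating map $\pair{f,g}$ is tracked by $m \mapsto \pri(\qsi f(m), \qsi g(m))$. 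Products in which one factor is $\emptyset$ are handled separately: since nothing maps into $\emptyset$ except $\emptyset$ itself, the object $\emptyset$ serves as the product, matching $\emptyset \times X = \emptyset$ in $\Set$.

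The essential case is the equalizer of a parallel pair $f, g : (S,s) \to (T,t)$, and this is where I expect the only real difficulty. The underlying set must be $E := \set{x \in S : f(x) = g(x)}$, and the problem is to produce a primitive recursive enumeration of $E$ that is \emph{simultaneously compatible with the inclusion} $E \hook S$, i.e.\ one for which the inclusion admits a primitive recursive tracker. A naive appeal to the footnoted fact that every recursively enumerable set has a primitive recursive enumeration yields \emph{some} enumeration of $E$ but gives no control over the inclusion, whose tracker would then seem to require an unbounded search for $s$-indices. The fix is to enumerate $E$ \emph{through} $s$. Consider the set of good indices
\[ G := \set{n \in \N : t_{\qsi f(n)} = t_{\qsi g(n)}}, \]
which is decidable—hence primitive recursive—since $t,\qsi f,\qsi g$ are primitive recursive and equality on $\N$ is decidable; using the tracking of $f$ and $g$ one checks $s_n \in E \iff n \in G$, so $s(G) = E$. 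If $E = \emptyset$ the equalizer is $\emptyset$. Otherwise I would hard-code a single seed $n_0 \in G$ (a constant, not computed by search) and define the primitive recursive retraction $h(n) := n$ if $n \in G$ and $h(n) := n_0$ otherwise, which surjects onto $G$; the equalizer object is then $(E, s \circ h)$. By construction the inclusion $\iota : E \hook S$ is tracked by $h$ itself, the equation $\iota \circ (s \circ h) = s \circ h$ being precisely the tracking condition with $\qsi\iota := h$. Finally, for a test map $k : (R,r) \to (S,s)$ with $f \circ k = g \circ k$, the corestriction $\bar k : R \to E$ is tracked by the \emph{same} function $\qsi k$: the hypothesis forces $s_{\qsi k(m)} \in E$, hence $\qsi k(m) \in G$, hence $h(\qsi k(m)) = \qsi k(m)$, so $s \circ h \circ \qsi k = s \circ \qsi k = \bar k \circ r$.

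Having produced a terminal object, binary products, and equalizers, and having observed in each case that the underlying sets and underlying universal maps are exactly those of the corresponding limit in $\Set$, it follows that $\PriM$ has all finite limits and that $U$ preserves them.
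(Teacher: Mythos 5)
Your proof is correct, and the one place where real care is needed --- producing an enumeration of the equalizer that is compatible with the inclusion into $(S,s)$ --- is handled properly: the decidable index set $G$, the hard-coded seed $n_0$, and the retraction $h$ are exactly the right devices, and your verification that the corestriction of a test map is tracked by the \emph{same} $\qsi k$ closes the universal property. The paper takes a slightly different decomposition: it constructs the terminal object and \emph{pullbacks} directly, enumerating $S \times_X T$ by $y(n) = (s_{\fst(n)}, t_{\snd(n)})$ when $f(s_{\fst(n)}) = g(t_{\snd(n)})$ and by a fixed default pair $(s_i,t_j)$ otherwise --- which is precisely your product enumeration and your equalizer trick fused into a single step. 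Your version buys a cleaner isolation of the difficulty (the product case is trivial; all the work sits in the equalizer, where the role of the tracker for the inclusion is made explicit), and your observation that faithfulness of $U$ renders the uniqueness halves of the universal properties automatic is a genuine economy that the paper only uses implicitly. The paper's version is marginally more compact since pullbacks plus a terminal object need only one nontrivial construction. Both yield the same preservation statement for the same reason: every limit built has the set-theoretic limit as its underlying set with the set-theoretic structure maps.
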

\begin{proof}
  Evidently $\set 0$ with its unique enumeration is a terminal object in $\PriM$, and $U$ preserves it. For pullbacks, it suffices to consider the case where all objects involved are not $\emptyset$. Suppose we have the following morphisms in $\PriM$,
  \[ f : (S,s) \to (X,x), \quad g : (T,t) \to (X,x). \]
  We first look at the set-theoretic pullback as follows,
  \[
  \begin{tikzcd}
    S \times_X T \ar[d, "{\pi_0}"'] \ar[r, "{\pi_1}"] & T \ar[d, "g"] \\
    S \ar[r, "f"'] & X
  \end{tikzcd}
  \]
  If $S \times_X T$ is empty, then it is evident that $\emptyset$ is also the pullback in $\PriM$, and $U$ preserves it. If $S \times_X T$ is non-empty, then we construct an enumeration of it. Fix some pair $(i,j)$ that $f(s_i) = g(t_j)$, and fix some bijective, monotone, primitive recursive pairing function $\pri$ with inverse $\fst,\snd$,\footnote{Here we have slightly abused the notation. Previously in \Cref{sec:init}, $\pri$, $\fst$ and $\snd$ are used as function names in $\T$, while here they denote some actual functions. However, we believe this will not cause any serious confusion.} define the enumeration $y$ of $S \times_X T$ as follows,
  \[ y(n) =
    \begin{cases}
      (s_{\fst(n)},t_{\snd(n)}) & f(s_{\fst(n)}) = g(t_{\snd(n)}) \\
      (s_i,t_j) & \other
    \end{cases}
  \]
  This enumeration is evidently primitive recursive, because by definition both $f \circ s$ and $g \circ t$ are primitive recursive. Intuitively, $y$ enumerates all pairs in $S \times T$, and compare their values under $f,g$: If they are equal, then $y$ maps $n$ to this pair; if they are not, then $y$ maps $n$ to some pair chosen in advance. Notice that the two projection maps $\pi_0$, $\pi_1$ are tracked by the following functions,
  \begin{align*}
    &\qsi{\pi_0}(n) =
    \begin{cases}
      \fst(n) & f(s_{\fst(n)}) = g(t_{\snd(n)}) \\
      i & \other
    \end{cases} \\
    &\qsi{\pi_1}(n) =
    \begin{cases}
      \snd(n) & f(s_{\fst(n)}) = g(t_{\snd(n)}) \\
      j & \other
    \end{cases}
  \end{align*}
  Both of them are primitive recursive. We verify the universal property. Given $u : (Z,z) \to (S,s)$ and $v : (Z,z) \to (T,t)$ that $f \circ u = g \circ v$, there is already a uniquely determined map $\pair{u,v} : Z \to S \times_X T$. We only need to show it is tracked by some primitive recursive function $\qsi{\pair{u,v}}$,
  \[ \qsi{\pair{u,v}}(n) = \pri(\qsi u(n),\qsi v(n)). \]
  By definition it is primitive recursive, and for any $n\in\N$ we must have
  \begin{align*}
    y(\qsi{\pair{u,v}}(n)) &= (s(\fst(\qsi{\pair{u,v}}(n))),t(\snd(\qsi{\pair{u,v}}(n)))) \\
    &= (s_{\qsi u(n)},t_{\qsi v(n)}) = (u(z_n),v(z_n)) = \pair{u,v}(z_n).
  \end{align*}
  The first equality is due to the fact that $\qsi{\pair{u,v}}(n)$ codes the pair $\qsi u(n),\qsi v(n)$, and we have
  \[ f(s_{\qsi u(n)}) = f(u(z_n)) = g(v(z_n)) = g(t_{\qsi v(n)}), \]
  hence the first clause of $y$ is effective. This shows that $\pair{u,v}$ is indeed tracked by some primitive recursive function, hence $(S\times_XT,y)$ is indeed the pullback in $\PriM$, and $U$ preserves pullbacks. 
\end{proof}

\begin{remark}\label{rem:mono}
  However, it is worth noticing that, unlike the case in $\Set$, \emph{not all} monomorphisms in $\PriM$ are isomorphic to one of the form $(S,s) \hook (T,t)$, where $S \subseteq T$ and the map is given by inclusion. This is due to the fact that there exists a bijective primitive recursive function, whose inverse is \emph{not} primitive recursive. Let $f : \N \to \N$ be such a function, then it is injective, thus consists of a monomorphism in $\PriM$,
  \[ f : (\N,\id) \inj (\N,\id). \]
  It is evidently tracked by $f$ itself. It is indeed a \emph{proper} monomorphism, i.e. not isomorphic to the identity on $(\N,\id)$, because $f \cong \id$ in the subobject lattice of $(\N,\id)$ iff $f\inv$ is also primitive recursive.
\end{remark}

\Cref{rem:mono} implies that we should be more careful when defining image factorisation in $\PriM$. Given a morphism $f : (S,s) \to (T,t)$, we can factorise it as follows,
\[
\begin{tikzcd}
  (S,s) \ar[dr, two heads] \ar[rrr, "f"] & & & (T,t) \\
  & (S/f,x) \ar[r, "f"'] & (f(S),y) \ar[ur, hook]
\end{tikzcd}
\]
where $S/f$ is the \emph{quotient} of $S$ under the equivalence relation generated by $f$, and $f(S)$ is the image of $S$ under $f$. Although when restricting $f$ to a map from $S/f$ to $f(S)$ it is bijective, it may \emph{not} always be an isomorphism in $\PriM$. Evidently, the object $(S/f,x)$ is the correct image of $f$ in $\PriM$:

\begin{lemma}\label{lem:primreg}
  $\PriM$ is regular, and $U$ preserves the image factorisation.
\end{lemma}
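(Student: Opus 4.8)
I need to show that $\PriM$ is a regular category and that the forgetful functor $U : \PriM \to \Set$ preserves image factorisation. Recall that a regular category is a category with finite limits in which every morphism factors as a regular epimorphism followed by a monomorphism, and in which regular epimorphisms are stable under pullback. By Lemma~\ref{lem:primlex} we already have finite limits and know $U$ preserves them, so the remaining work is to produce the factorisation, identify the regular epis, and check their pullback-stability.

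**The plan.** First I would make precise the factorisation already indicated in the text: given $f : (S,s) \to (T,t)$, factor it as the quotient map $(S,s) \surj (S/f, x)$ followed by the bijection-onto-image $(S/f,x) \to (f(S),y) \hook (T,t)$, where $S/f$ is the quotient of $S$ by the kernel equivalence relation of $f$ and $x$ is the induced enumeration $x_n := [s_n]$. The key point to verify is that $x$ is a legitimate enumeration making $(S/f,x)$ an object of $\PriM$ --- this is immediate since $x = q \circ s$ with $q$ the quotient function, which is tracked by the identity. I would then argue that the quotient map $(S,s) \to (S/f,x)$, tracked by $\id_\N$, is a \emph{regular} epimorphism: concretely it is the coequaliser of its kernel pair, which exists by the finite-limit structure of Lemma~\ref{lem:primlex}. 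The map $(S/f,x) \to (T,t)$ is injective, hence (as noted in the excerpt, since $U$ is faithful) a monomorphism in $\PriM$; crucially I do \emph{not} claim it is the \emph{inclusion}-type mono onto $(f(S),y)$, because Remark~\ref{rem:mono} warns the bijection onto the image need not be an isomorphism.

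**The two things to check.** The substantive content is (a) that the quotient map is a \emph{regular} epi, i.e. that it genuinely is the coequaliser of the kernel pair $(S,s)\times_{(T,t)}(S,s) \rightrightarrows (S,s)$ computed in Lemma~\ref{lem:primlex}, and (b) stability of regular epis under pullback. For (a) I would verify the universal property directly: any $g : (S,s)\to (W,w)$ coequalising the kernel pair factors set-theoretically through $S/f$ by a unique function, and that function is tracked by $\qsi g$ itself (the same tracking primitive recursive function works, since an index $n$ for a class $[s_n]$ is just an index for $s_n$). For (b), given a regular epi $e : (S,s)\surj (Q,q)$ and any $h : (R,r) \to (Q,q)$, I would form the pullback $(R,r) \times_{(Q,q)} (S,s)$ via the explicit enumeration from Lemma~\ref{lem:primlex} and show the pullback projection onto $(R,r)$ is again a quotient map of the above form, hence regular epi; the explicit tracking functions produced in that lemma make this a finite computation rather than an abstract argument. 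Finally, preservation by $U$ follows because $U$ preserves finite limits (so kernel pairs), sends the quotient map to the genuine surjection $S \to S/f$ in $\Set$, and $\Set$-epi/mono factorisation is exactly surjection-then-injection; thus $U$ carries my regular-epi/mono factorisation to the image factorisation in $\Set$.

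**The main obstacle.** The delicate point, flagged by Remark~\ref{rem:mono}, is keeping straight that the monomorphism part of the factorisation is $(S/f,x) \to (T,t)$ \emph{as a morphism of $\PriM$} and not the set-inclusion $f(S)\hook T$ equipped with an arbitrary enumeration. The bijection $(S/f,x)\to (f(S),y)$ may fail to be invertible in $\PriM$, so I must resist the temptation to replace the image by $(f(S),y)$; the regular image must be taken to be $(S/f,x)$ itself. Establishing that this object, together with its tracking data, really is a monomorphism into $(T,t)$ and really is the coequaliser --- rather than conflating it with its underlying set-theoretic image --- is where the care is needed, but once the enumeration $x$ and the tracking functions are written down explicitly, each verification reduces to a routine check that the relevant witnessing functions are primitive recursive, exactly as in Lemma~\ref{lem:primlex}.
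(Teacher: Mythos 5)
Your construction of the image object and the verification that the quotient map is the coequaliser of the kernel pair (with the induced comparison tracked by $\qsi g$ itself) are fine, and essentially agree with the paper's factorisation through $(S/f, s\circ g)$; you are also right to heed Remark~\ref{rem:mono} and not identify the image with $(f(S),y)$. The genuine gap is in your step (b), pullback-stability. You assert that the pullback projection of a regular epi $e : (S,s)\surj (Q,q)$ along $h:(R,r)\to(Q,q)$ is ``again a quotient map of the above form'' and that ``the explicit tracking functions produced in Lemma~\ref{lem:primlex} make this a finite computation.'' They do not. To show the projection $\pi_0 : (R\times_Q S, z)\to (R,r)$ is a regular epi (or, equivalently, to track the inverse of the comparison bijection out of its kernel-pair quotient), you must produce a \emph{primitive recursive} function that, given an index $n$ of $r_n$, returns an index of some point of the fibre $\pi_0^{-1}(r_n)$ --- i.e.\ an index $j$ with $e(s_j)=h(r_n)$. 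The tracking data of Lemma~\ref{lem:primlex} only tracks the projections; finding such a $j$ is a priori an unbounded search, and surjectivity of $e$ alone does not make it primitive recursive (indeed, Remark~\ref{rem:mono} already exhibits a surjective tracked bijection whose inverse is not tracked, so ``surjective morphism'' and ``regular epi'' do not coincide in $\PriM$).

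The missing idea is exactly the paper's key observation: the quotient map $q : (S,s)\to (S/f, s\circ g)$ \emph{splits} in $\PriM$, because the inclusion of the minimal representatives $(S/f,s\circ g)\hook (S,s)$ is a section of $q$ tracked by the primitive recursive function $g(n)=\mu i\le n.\,f(s_i)=f(s_n)$. Hence every morphism factors as a split epi followed by a mono, every regular epi is (isomorphic to such a quotient map, hence) split, and split epis are trivially stable under pullback --- the section $\sigma$ is precisely the device that lets you select an index in each fibre primitively recursively, via $j(n)=\qsi\sigma(\qsi h(n))$. Note that your own enumeration $x_n=[s_n]$ admits the same section (the choice of minimal representative $s_{g(n)}$ is constant on classes), so the fix is available in your setup; but without making the splitting explicit, the pullback-stability argument as you describe it does not close.
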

\begin{proof}
  Again it suffices to construct the image factorisation for non-empty enumerations because the image of any map out of $\emptyset$ is given by $\emptyset$ itself. Suppose we have a morphism $f : (S,s) \to (T,t)$, we first construct a function $g : \N \to \N$ as follows,
  \[ g(n) = \muq in.f(s_i) = f(s_n). \]
  $g$ is evidently primitive recursive, because again $f \circ s$ is. Intuitively, for any $n\in\N$, $g$ computes the minimal index $i$ such that $s_i$ and $s_n$ has the same value under $f$. Let $S/f$ be the recursively enumerable set defined by the enumeration $s \circ g$. There is an evident quotient map
  \[ q : (S,s) \surj (S/f,s \circ g), \]
  sending each $x \in S$ to some $s_i$, where $i$ is the minimal index such that $s_i$ has the same value as $x$ under $f$. This quotient map is in fact tracked by the identity function,
  \[
  \begin{tikzcd}
    \N \ar[r, "s"] \ar[d, equal] & S \ar[d, two heads, "q"] \\
    \N \ar[r, "{s\circ g}"'] & S/f
  \end{tikzcd}
  \]
  because by definition, for any $n\in\N$,
  \[ q(s_n) = s(\muq in.f(s_i) = f(s_n)) = s_{g(n)}. \]
  Hence, $q$ is a well-defined morphism in $\PriM$. $f$ also induces a map
  \[ f : (S/f,s\circ g) \to (T,t). \]
  It is also tracked by $\qsi f$ in this case, because for any $n\in\N$,
  \[ f(s_{g(n)}) = f(s_n) = t_{\qsi f(n)}. \]
  This way, we have indeed obtained a factorisation as follows,
  \[
  \begin{tikzcd}
    (S,s) \ar[rr, "f"] \ar[dr, two heads, "q"'] & & (T,t) \\
    & (S/f,s \circ g) \ar[ur, hook]_f &
  \end{tikzcd}
  \]
  Furthermore, $q$ \emph{splits} in $\PriM$, because the inclusion $(S/f,s\circ g) \hook (S,s)$ is also a morphism in $\PriM$ (it is tracked by $g$), and it is a section of $q$. This shows that every morphism in $f$ actually factors as a \emph{split epi} followed by a mono, and it implies $(S/f,s\circ g)$ is the image factorisation of $f$ in $\PriM$, and the factorisation is preserved under pullback. $U$ evidently preserves image factorisation.
\end{proof}

\begin{lemma}\label{lem:primcoh}
  $\PriM$ is coherent, and $U$ preserves the coherent structure.
\end{lemma}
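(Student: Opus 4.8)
The plan is to use the standard characterisation: a \emph{coherent} category is a regular category whose subobject lattices admit finite joins that are stable under pullback. Regularity of $\PriM$ and its preservation by $U$ are already supplied by Lemma~\ref{lem:primreg}, so it remains to produce a bottom element and binary unions in each $\sub((X,x))$, to prove these are stable under pullback, and to check that $U$ preserves them. For the bottom element, the strict initial $\emptyset$ gives the least subobject $\emptyset \hook (X,x)$ of every object; $U$ sends it to $\emptyset \subseteq X$, and stability is immediate, since the pullback of $\emptyset \to (X,x)$ along any morphism is again $\emptyset$ by the treatment of $\emptyset$ in Lemma~\ref{lem:primlex}.

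For binary joins, given monos $m : (A,a) \inj (X,x)$ and $n : (B,b) \inj (X,x)$ with tracking functions $\qsi m, \qsi n$, I would define an enumeration $e$ of the set union $m(A) \cup n(B) \subseteq X$ by \emph{interleaving}, namely $e(2k) = m(a_k)$ and $e(2k+1) = n(b_k)$; this is primitive recursive because $m \circ a = x \circ \qsi m$ and $n \circ b = x \circ \qsi n$ are. The inclusion $j : (m(A)\cup n(B), e) \inj (X,x)$ is then tracked by $\qsi j(2k) = \qsi m(k)$ and $\qsi j(2k+1) = \qsi n(k)$, and is monic since it is injective and $U$ is faithful. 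One then verifies the join property directly in $\PriM$: the factorisations $m \le j$ and $n \le j$ are tracked by $k \mapsto 2k$ and $k \mapsto 2k+1$, while for any subobject $p$ with $m \le p$ and $n \le p$ witnessed by trackings $\qsi u, \qsi v$, the induced comparison out of $(m(A)\cup n(B), e)$ is tracked by the map sending $2k \mapsto \qsi u(k)$ and $2k+1 \mapsto \qsi v(k)$. As the underlying set is exactly the set-theoretic union, $U$ preserves binary joins.

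The main obstacle is stability under pullback. Here care is needed, because by Remark~\ref{rem:mono} the functor $U$ does \emph{not} reflect the subobject order: two monos with the same image in $X$ may represent distinct subobjects of $(X,x)$. Thus, although $U$ preserves both pullbacks (Lemma~\ref{lem:primlex}) and unions, so that $f^*(m \vee n)$ and $f^* m \vee f^* n$ have the \emph{same} underlying subset $f^{-1}(m(A)\cup n(B))$ of $Y$ for any $f : (Y,y) \to (X,x)$, this does not by itself identify them as subobjects in $\PriM$. I therefore plan to invert the canonical comparison mono $f^* m \vee f^* n \inj f^*(m \vee n)$ by exhibiting a primitive recursive inter-translation of the two enumerations. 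The point that makes this work is the \emph{tagged} shape of the union enumeration $e$: a code for the pullback $f^*(m\vee n)$ produced by Lemma~\ref{lem:primlex} is a pair of a $(Y,y)$-index and an $e$-index, and the parity of the latter records whether the element came from $A$ or $B$ together with the underlying $A$- or $B$-index. From this tag one reads off primitive recursively the corresponding code in $f^* m$ or $f^* n$, hence in $f^* m \vee f^* n$; the reverse translation merely re-tags. These two primitive recursive maps track the identity on the common underlying set in both directions, so the comparison is an isomorphism of subobjects and stability follows. Assembling the pieces, $\PriM$ is coherent, and since $U$ preserves finite limits, image factorisation, the initial object, and binary unions, it is a coherent functor.
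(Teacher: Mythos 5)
Your proof is correct, but it takes a different route from the paper's. The paper does not construct joins in the subobject lattices directly: it constructs finite \emph{coproducts} in $\PriM$ (the coproduct $(S \sqcup T, x)$ with the same even/odd interleaved enumeration you use), observes that $U$ preserves them and that they are disjoint and universal, and then invokes the standard fact that a regular category with disjoint universal finite coproducts is coherent --- the union of two subobjects being the image of the induced map out of their coproduct, with stability inherited from the stability of coproducts and of image factorisations already established in Lemma~\ref{lem:primreg}. Your construction of $(m(A)\cup n(B), e)$ is essentially the image of $[m,n]$ on the coproduct, computed by hand, and your re-tagging argument for $f^*(m\vee n) \cong f^*m \vee f^*n$ is a concrete unwinding of what the general fact delivers abstractly. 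What the paper's route buys is precisely that it sidesteps the delicate point you correctly flag: since by Remark~\ref{rem:mono} two monos with the same underlying image need not be the same subobject of $(X,x)$, equality of subobjects must be witnessed by primitive recursive translations in both directions, and delegating this to stability of coproducts and images avoids doing the coding explicitly. What your route buys is explicitness: the paper's claim that the coproducts are ``disjoint and universal because $U$ preserves pullbacks and coproducts in $\Set$ are disjoint and universal'' is itself terse (as $U$ does not reflect isomorphisms, this needs the same kind of tracking argument you give), so your parity-based inter-translation of the pullback codes is in effect supplying a detail the paper glosses over. Two small points to tidy in your write-up: when verifying the universal property of the join, note that the comparison $m(A)\cup n(B)\to P$ is well-defined as a function because $p$ is injective (so $m(a_k)=n(b_{k'})$ forces $u(a_k)=v(b_{k'})$); and handle the degenerate cases where one or both of $A$, $B$ (or the preimage under $f$) is empty, where the join or pullback collapses to $\emptyset$.
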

\begin{proof}
  We need to verify that the subobject lattice of any object $(S,s)$ will be a distributive lattice, and pullbacks preserves meets and joins in them. It suffices to show we have disjoint and universal finite coproduct in $\PriM$, since we have already shown $\PriM$ is regular. To this end, given $(S,s)$ and $(T,t)$, we construct its coproduct as $(S\sqcup T,x)$, where $S \sqcup T$ is the disjoint union of $S$ and $T$, and the enumeration is given by
  \[ x_n =
    \begin{cases}
      s_i & x = 2i \\
      t_i & x = 2i+1
    \end{cases}
  \]
  $x$ is evidently primitive recursive, and verifying its universal property is routine. $U$ clearly preserves the coproduct, and it being disjoint and universal follows from the fact that $U$ also preserves pullbacks, and coproducts in $\Set$ is disjoint and universal.
\end{proof}

\begin{remark}
  Notice that $\PriM$ is \emph{not} Boolean, nor Heyting, essentially because recursively enumerable subsets are not closed under complements. This implies that we cannot interpret all formulas in $I\Sigma_1$ in $\PriM$, but we can interpret $\T$.
\end{remark}

\begin{lemma}\label{lem:primpno}
  $(\N,\id)$ is a PNO in $\PriM$.
\end{lemma}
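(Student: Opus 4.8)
The plan is to equip $(\N,\id)$ with the evident zero and successor and then obtain the recursor by iterating tracking functions on the level of codes. For $\ms 0 : 1 \to (\N,\id)$ I would take the global element picking out $0 \in \N$, tracked by the constant map $n \mapsto 0$; for $\ms s : (\N,\id) \to (\N,\id)$ I would take $n \mapsto n+1$, tracked by itself. Both are primitive recursive, so these are genuine morphisms of $\PriM$.

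Now fix $f : (A,a) \to (X,\xi)$ and $g : (X,\xi) \to (X,\xi)$ with primitive recursive trackings $\qsi f,\qsi g$. By Lemma~\ref{lem:primlex} the product $(A,a) \times (\N,\id)$ has underlying set $A \times \N$, and reading off the pullback construction over the terminal object (where the matching condition is vacuous) its enumeration is $y(m) = (a_{\fst(m)},\snd(m))$. Set-theoretically there is exactly one candidate for the recursor, namely
\[ h : A \times \N \to X, \quad h(c,n) = g^n(f(c)), \]
where $g^n$ is the $n$-fold iterate of $g$; this is the unique map in $\Set$ with $h(c,\ms 0) = f(c)$ and $h(c,n+1) = g(h(c,n))$. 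So the real content is to promote $h$ to a morphism of $\PriM$.

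The main step, and the only genuine obstacle, is to produce a primitive recursive tracking for $h$. I would define $R : \N \times \N \to \N$ by primitive recursion, $R(\ms 0,j) = j$ and $R(k+1,j) = \qsi g(R(k,j))$; since $\qsi g$ is primitive recursive, so is $R$, which packages ``iterate $\qsi g$ a primitive-recursive number of times''. I then claim $\qsi h(m) := R(\snd(m),\qsi f(\fst(m)))$ tracks $h$. This is checked by a short induction on $n = \snd(m)$ using the tracking identities $\xi(\qsi f(i)) = f(a_i)$ and $\xi(\qsi g(j)) = g(\xi(j))$, which yields $\xi(\qsi h(m)) = g^n(f(a_{\fst(m)})) = h(y(m))$. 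Hence $\rec_{f,g} := h$ is a well-defined morphism of $\PriM$.

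Finally, both commutativity of the recursion diagram and uniqueness reduce to $\Set$ via faithfulness of $U$. The two triangles commute after applying $U$ by the defining equations of $h$, so they commute in $\PriM$ because $U$ is faithful. For uniqueness, any $\sigma : (A,a) \times (\N,\id) \to (X,\xi)$ making the diagram commute has underlying function $U\sigma$ satisfying the same set-theoretic recursion, whence $U\sigma = Uh$ by uniqueness of recursion in $\Set$, and so $\sigma = \rec_{f,g}$ by faithfulness. Thus everything except the tracking step is forced by the faithful forgetful functor, and the tracking step rests precisely on the closure of the primitive recursive functions under bounded iteration captured by $R$.
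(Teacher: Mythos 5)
Your proof is correct and follows essentially the same route as the paper: the recursor is forced set-theoretically, and the only real content is exhibiting a primitive recursive tracking built from $\qsi f$ and $\qsi g$. The one (minor, and arguably cleaner) difference is that you track via the auxiliary iterator $R(k,j)$ defined by ordinary primitive recursion in $k$, whereas the paper defines the tracking directly by course-of-values recursion on the code $n$, which is why it needs the pairing function to be monotone; your version sidesteps that requirement.
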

\begin{proof}
  For any $f : (S,s) \to (T,t)$ and $g$ from $(T,t)$ to itself, we do get a diagram of the following type,
  \[
  \begin{tikzcd}
    (S,s) \ar[r, "{\id \times 0}"] \ar[dr, "{f}"'] & (S \times \N,x) \ar[d, "{\rec_{f,g}}"] & (S\times\N,x) \ar[l, "{\id \times\ms s}"'] \ar[d, "{\rec_{f,g}}"] \\
    & (T,t) & (T,t) \ar[l, "{g}"]
  \end{tikzcd}
  \]
  Here the function $\rec_{f,g}$ is given by the following definition,
  \[
    \begin{cases}
      \rec_{f,g}(a,0) = f(a) \\
      \rec_{f,g}(a,n+1) = g(\rec_{f,g}(a,n))
    \end{cases}
  \]
  This is the same way how primitive recursion is constructed in $\Set$, thus it is already unique. We then only need to prove that $\rec_{f,g}$ is tracked by some primitive recursive function $\qsi{\rec_{f,g}}$. Recall from \Cref{lem:primlex}, the enumeration $x$ is given as follows,
  \[ x(n) = (s_{\fst(n)},\snd(n)). \]
  Hence, we may again construct $\qsi{\rec_{f,g}}$ by primitive recursion as follows,
  \[
    \begin{cases}
      \qsi{\rec_{f,g}}(n) = \qsi f(\fst(n)) & \snd(n) = 0 \\
      \qsi{\rec_{f,g}}(n) = \qsi g(\qsi{\rec_{f,g}}(\pri(\fst(n),m))) & \snd(n) = m + 1
    \end{cases}
  \]
  This makes $\qsi{\rec_{f,g}}$ primitive recursive, because if $\snd(n) = m+1$, then since we have chosen a monotone pairing function, $\pri(\fst(n),m) < n$. It is easy to verify that $\rec_{f,g}$ is tracked by $\qsi{\rec_{f,g}}$, and it implies that $U$ preserves the PNO structure. 
\end{proof}

\begin{proposition}\label{prop:primprcoherent}
  $\PriM$ is a pr-coherent category, and $U : \PriM \to \Set$ is a pr-coherent functor.
\end{proposition}
\begin{proof}
  Combine \Cref{lem:primlex},~\Ref{lem:primreg},~\Ref{lem:primcoh}, and~\Ref{lem:primpno}.
\end{proof}

The following is the main semantic result of this paper:

\begin{theorem}[Definable functions in the initial pr-coherent category]\label{thm:defninprcoh}
  Let $(\mc C,N)$ be the initial pr-coherent category. The evaluation of morphisms between powers of $N$ in $\mc C$ along the unique functor $\mc C \to \Set$ are exactly primitive recursive functions.
\end{theorem}
\begin{proof}
  By initiality of $\mc C$, we have the following diagram,
  \[
  \begin{tikzcd}
    \mc C[\T] \ar[dr] \ar[rr] & & \Set \\
    & \PriM \ar[ur]
  \end{tikzcd}
  \]
  This means the canonical interpretation of $\mc C[\T]$ into $\Set$ factors through $\PriM$, hence any such $\theta : N^n \to N$ defines a primitive recursive function from $\N^n$ to $\N$ by the definition of $\PriM$.
\end{proof}

\begin{corollary}\label{cor:ptft}
  Provably total functions in $\T$ are exactly primitive recursive functions.
\end{corollary}
\begin{proof}
  By \Cref{thm:ctinitial}, $\mc C[\T]$ is the initial pr-coherent category, and morphisms between $N$ in $\mc C[\T]$ are exactly provably total functions in $\T$.
\end{proof}

As one can see, once we have properly established the initiality of $\mc C[\T]$ and realised that $\PriM$ is a pr-coherent category, the characterisation of provably total functions in $\T$ follows as an easy consequence.

We will see more applications of the initiality of $\mc C[\T]$ in \Cref{sec:furtherp}. But before that, let us first discuss the precise relationship between the theory $\T$ of coherent arithmetic, and the theory $I\Sigma_1$. We will show in the next section that $\T$ is precisely the $\Sigma_1$-fragment of $I\Sigma_1$.

\section{Coherent arithmetic and $I\Sigma_1$}\label{sec:cohsigma}

In this section, we want to compare our coherent arithmetic $\T$ with $I\Sigma_1$. We will show that $I\Sigma_1$ is a \emph{conservative} extension of $\T$, in the sense that for any sequent $\varphi \vdash_{\ov x} \psi$ in $\T$, if it is provable in $I\Sigma_1$, then it is already provable in $\T$. This implies that the provably total functions in $\T$ coincide with the strong $\Sigma_1$-representable functions in $I\Sigma_1$. Thus, together with \Cref{cor:ptft}, the theorem mentioned in \Cref{subsec:application} follows as a consequence.

To make the above claim precise, we need an embedding of the theory $\T$ into $I\Sigma_1$. This is not automatic, because $\T$ contains additional function symbols than $I\Sigma_1$. However, this is not an essential problem: We may simply add all function symbols in \emph{PrimRec} and their corresponding axiomatisation into $I\Sigma_1$ as well. This will not change the theory $I\Sigma_1$ in any essential way, because it is well-known by the work of G\"odel that any primitive recursive function is strongly representable in $I\Sigma_1$ by $\Sigma_1$-formulas. This way, every $\T$ sequent can be viewed as an $I\Sigma_1$ sequent as well.

The conservativity result follows almost immediately by proof-theoretic analysis of $I\Sigma_1$. For this purpose, it is convenient to consider a derivation system for $I\Sigma_1$ in natural deduction style. A typical natural deduction proof will be a finite tree with possibly open leaves,
\[
  \begin{prooftree}
    \hypo{\varphi}
    \ellipsis{}{\psi}
  \end{prooftree}
\]
Besides the usual introduction and elimination rules for connectives, the natural deduction system for $I\Sigma_1$ furthermore has the following rule for induction on $\Sigma_1$-formulas $\varphi(\ov x,y)$:
\[
  \begin{prooftree}
    \hypo{}
    \ellipsis{}{\varphi(\ov x,\ms 0)}
    \hypo{[\varphi(\ov x,y)]}
    \ellipsis{}{\varphi(\ov x,\ms sy)}
    \infer 2[IND]{\varphi(\ov x,y)}
  \end{prooftree}
\]
In particular, when apply the rule IND, the open assumption $\varphi(\ov x,y)$ on the derivation of $\varphi(\ov x,\ms sy)$ can be cancelled.

\begin{proposition}\label{thm:conservative}
  $I\Sigma_1$ is conservative over $\T$.
\end{proposition}
\begin{proof}
  Let $\varphi(\ov x) \vdash_{\ov x} \psi(\ov x)$ be a sequent in $\T$, and suppose it is provable in $I\Sigma_1$. This means that there is a natural deduction proof tree in $I\Sigma_1$ having the following form,
\[
  \begin{prooftree}
    \hypo{\varphi(\ov x)}
    \ellipsis{}{\psi(\ov x)}
  \end{prooftree}
\]
By normalisation result for natural deduction of arithmetic, see e.g.~\citet{annika2015normalisation}, there is a natural deduction proof which has the \emph{subformula property}, i.e. every formula appearing in the proof tree must be subformulas of $\varphi(\ov x)$ and $\psi(\ov x)$. In particular, every formula appearing in the proof tree will be \emph{coherent}, and the rule applications are restricted to the coherent fragment of first-order logic. Then it is not hard to see the whole derivation can be carried out in $\T$ to prove $\varphi(\ov x)\vdash_{\ov x}\psi(\ov x)$ as well.
\end{proof}

In categorical terms, let $\mc C[I\Sigma_1]$ be the syntactic category of $I\Sigma_1$.\footnote{As mentioned before, the syntactic category construction works for any first-order theory.} Notice that whether we add \emph{PrimRec} as functions symbols to $I\Sigma_1$ together with their axiomatisation or not, the syntactic category $\mc C[I\Sigma_1]$ will be \emph{equivalent}, because this is a definitional extension of $I\Sigma_1$.

There is a natural embedding of $\mc C[\T]$ into $\mc C[I\Sigma_1]$, basically by sending each formula to itself. As a first consequence, \Cref{thm:conservative} implies the following result:

\begin{lemma}\label{lem:transfaith}
  The embedding $\mc C[\T] \to \mc C[I\Sigma_1]$ is faithful.
\end{lemma}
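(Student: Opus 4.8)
The plan is to reduce faithfulness directly to the conservativity result of Theorem~\ref{thm:conservative}. First I would unpack what faithfulness means for this embedding. Fix two objects $\varphi(\ov x)$ and $\psi(\ov y)$ of $\mc C[\T]$ and two parallel morphisms $\theta,\sigma : \varphi(\ov x) \to \psi(\ov y)$. By construction of the syntactic category, $\theta$ and $\sigma$ are provably functional coherent formulas $\theta(\ov x,\ov y)$ and $\sigma(\ov x,\ov y)$, identified only up to $\T$-provable equivalence; and the embedding sends each of them to the very same formula, now regarded as a morphism in $\mc C[I\Sigma_1]$. Thus the hypothesis that the two images agree unwinds to $I\Sigma_1 \vdash \theta \dashv\vdash_{\ov x,\ov y} \sigma$, and the goal is to deduce $\T \vdash \theta \dashv\vdash_{\ov x,\ov y} \sigma$, which is exactly the statement that $\theta = \sigma$ already in $\mc C[\T]$.

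The key observation is that the equivalence $\theta \dashv\vdash \sigma$ is nothing but the pair of sequents $\theta \vdash_{\ov x,\ov y} \sigma$ and $\sigma \vdash_{\ov x,\ov y} \theta$, and both of these are genuine $\T$-sequents: their antecedents and succedents are the coherent formulas $\theta$ and $\sigma$, which are by definition formulas of $\T$. Hence each of the two sequents lies in the scope of Theorem~\ref{thm:conservative}. Applying conservativity to each one separately, provability in $I\Sigma_1$ yields provability in $\T$, so that $\T$ proves both $\theta \vdash_{\ov x,\ov y} \sigma$ and $\sigma \vdash_{\ov x,\ov y} \theta$, i.e. $\T \vdash \theta \dashv\vdash_{\ov x,\ov y} \sigma$. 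This identifies $\theta$ and $\sigma$ as the same morphism of $\mc C[\T]$, establishing faithfulness.

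The only point that requires care --- and where the whole argument hinges --- is confirming that the witnessing equivalence really consists of coherent sequents to which conservativity applies. This is guaranteed because morphisms in $\mc C[\T]$ are, by definition, equivalence classes of coherent $\T$-formulas, so no $I\Sigma_1$-specific connectives such as $\neg$, $\to$, or $\forall$ can sneak into $\theta$ or $\sigma$. I expect no further obstacle: the heavy lifting has already been done in Theorem~\ref{thm:conservative}, and faithfulness is simply its categorical reformulation at the level of hom-sets.
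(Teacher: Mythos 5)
Your proof is correct and follows essentially the same route as the paper: both reduce faithfulness to Theorem~\ref{thm:conservative} by observing that provable equivalence of two parallel morphisms is a pair of coherent sequents to which conservativity applies (the paper merely phrases the argument contrapositively).
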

\begin{proof}
  Consider two maps $\theta,\sigma : \varphi \to \psi$ in $\mc C[\T]$. If they are distinct, then $\theta$ and $\sigma$ are not provably equivalent in $\T$. By \Cref{thm:conservative}, they are also not provably equivalent in $I\Sigma_1$, thus they are distinct in $\mc C[I\Sigma_1]$ as well.
\end{proof}

We can in fact characterise the exact objects and morphisms within $\mc C[I\Sigma_1]$ that lies in the image of this embedding. Let $\mc C[I\Sigma_1]_{\Sigma_1}$ be the $\Sigma_1$-subcategory of $\mc C[I\Sigma_1]$, viz. the subcategory consisting of $\Sigma_1$-formulas as objects and $\Sigma_1$-morphisms as morphisms. Notice that this is a well-defined subcategory, because composition of two $\Sigma_1$-morphism by definition is still $\Sigma_1$. The two categories $\mc C[\T]$ and $\mc C[I\Sigma_1]_{\Sigma_1}$ are equivalent:



\begin{corollary}\label{cor:tsigonefrag}
  The inclusion $\mc C[\T] \to \mc C[I\Sigma_1]$ becomes an equivalence when restricting the codomain to $\mc C[I\Sigma_1]_{\Sigma_1}$. 
\end{corollary}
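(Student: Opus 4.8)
The plan is to show that the embedding, with codomain restricted to $\mc C[I\Sigma_1]_{\Sigma_1}$, is fully faithful and essentially surjective. Faithfulness is already Lemma~\ref{lem:transfaith}, so the real work lies in essential surjectivity and fullness. Both reduce to a single translation lemma which I would isolate first: \emph{every $\Delta_0$-formula of $I\Sigma_1$ is $I\Sigma_1$-provably equivalent to a decidable (complemented) coherent formula of $\T$}.

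First I would establish this translation by induction on the structure of a $\Delta_0$-formula $\chi$, producing a decidable $\T$-formula $\chi^\ast$ with $I\Sigma_1 \vdash \chi \leftrightarrow \chi^\ast$. The atomic cases $s = t$ and $s < t$ are decidable in $\T$ outright; conjunction and disjunction are handled directly, since decidable subobjects are closed under $\wedge$ and $\vee$; negation and implication are handled by Lemma~\ref{lem:del0comp}, which supplies complements of $\Delta_0$-formulas inside $\T$, so that $\neg\chi$ maps to $\qsi{\chi^\ast}$ and $\chi \to \eta$ maps to $\qsi{\chi^\ast} \vee \eta^\ast$; and the bounded quantifiers are absorbed by the bounded universal quantifier of $\T$ from Definition~\ref{def:bduniqt}, which preserves decidability, together with ordinary existential quantification. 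Since every step of this translation is already provable in $\T$, and $\T$-sequents are $I\Sigma_1$-sequents under the embedding, the equivalence $\chi \leftrightarrow \chi^\ast$ holds in $I\Sigma_1$ as required.

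Essential surjectivity then follows immediately: an arbitrary object of $\mc C[I\Sigma_1]_{\Sigma_1}$ is a $\Sigma_1$-formula $\exists \ov y\,\chi$ with $\chi$ being $\Delta_0$, and applying the translation to $\chi$ yields $\exists \ov y\,\chi^\ast$, a coherent $\T$-formula that is $I\Sigma_1$-provably equivalent to the original and hence isomorphic to it in $\mc C[I\Sigma_1]$ (provable equivalence of formulas in a common context gives an isomorphism in the syntactic category). For fullness I would take a $\Sigma_1$-morphism $\theta : \varphi \to \psi$ whose domain and codomain, by essential surjectivity, may be assumed to be coherent $\T$-formulas; the same translation replaces $\theta$ by an $I\Sigma_1$-equivalent coherent $\T$-formula $\theta^\ast$. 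The key point is that the three functionality conditions $\theta^\ast \vdash \varphi \wedge \psi$, $\varphi \vdash \exists \ov y\,\theta^\ast$, and $\theta^\ast(\ov x,\ov y) \wedge \theta^\ast(\ov x,\ov z) \vdash \ov y = \ov z$ are now all \emph{coherent} sequents, and each is provable in $I\Sigma_1$ since $\theta^\ast$ is $I\Sigma_1$-equivalent to the functional $\theta$. By conservativity (Theorem~\ref{thm:conservative}) they are already provable in $\T$, so $\theta^\ast$ is a genuine morphism of $\mc C[\T]$ mapping onto $\theta$.

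I expect the main obstacle to be the translation lemma itself, and specifically its negation and implication cases, since these are exactly where coherent logic lacks the primitive connectives and where the argument hinges entirely on Lemma~\ref{lem:del0comp}, i.e. on decidability being closed under all Boolean operations and bounded quantification within $\T$. Once that closure is in hand, essential surjectivity and fullness are routine, and together with the prior faithfulness they yield the claimed equivalence $\mc C[\T] \simeq \mc C[I\Sigma_1]_{\Sigma_1}$.
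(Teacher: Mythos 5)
Your proposal is correct and follows essentially the same route as the paper, which likewise derives essential surjectivity from Lemma~\ref{lem:del0comp} and combines it with the faithfulness of Lemma~\ref{lem:transfaith}. You simply make explicit two steps the paper leaves implicit: the structural induction behind the $\Delta_0$-translation, and the fullness argument via Theorem~\ref{thm:conservative} applied to the (coherent) functionality sequents.
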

\begin{proof}
  Since objects in $\mc C[\T]$ and $\mc C[I\Sigma_1]_{\Sigma_1}$ are all exactly the $\Sigma_1$-formulas, this is essentially surjective. The conservativity result in \Cref{lem:transfaith} also implies $\mc C[\T] \to \mc C[I\Sigma_1]_{\Sigma_1}$ is fully faithful, since the morphisms are provably total functions defined by $\Sigma_1$-formulas. 
\end{proof}

\Cref{cor:tsigonefrag}, combined with \Cref{cor:ptft}, then suffices to imply the result mentioned in \Cref{subsec:application}, because provably total recursive functions in $I\Sigma_1$ by definition lie in $\mc C[I\Sigma_1]_{\Sigma_1}$, which is equivalently a morphism in $\mc C[\T]$.

\section{Further Proof-Theoretic Properties of Coherent Arithmetic}\label{sec:furtherp}

The initiality result stated in \Cref{thm:ctinitial} also has other applications. As we have seen, characterisation of provably total recursive functions in $\T$ is only an easy consequence of this fact. In this section, we use the initiality theorem to establish further proof-theoretic properties of $\T$.

Our main technical tool is the so-called \emph{Artin glueing} from category theory, which generally applies to a large classes of initial models of certain types of categories; cf.~\citet{aurelio1995glue}. From a type-theoretic perspective, the glueing argument is equivalent to the Tait computability method; cf.~\citet{coquand1997intuitionistic}. Through the lens of category theory, essentially the same argument can be applied to logic of arithmetic.

Given any pr-coherent category $(\mc C,N)$, there is a global section functor
\[ \Gamma : \mc C \to \Set, \]
sending each object $X$ in $\mc C$ to the set of global elements $\mc C(1,X)$ of $X$, which preserves all limits. We can then glue $\mc C$ with $\Set$ along $\Gamma$, and the resulting category is usually called the \emph{Freyd cover} of $\mc C$:

\begin{definition}[Freyd cover]\label{def:freyd}
  The Freyd cover of a pr-coherent category $\mc C$, denoted as $\pf{\mc C}$, is the comma category $\Set \cv \Gamma$ defined as follows:
  \begin{itemize}
  \item Objects: Tuples $(A,X,f)$, where $A$ is a set, $X$ is an object in $\mc C$, and $f : A \to \Gamma X$ is a function.
  \item Morphisms: A morphism from $(A,X,f)$ to $(B,Y,g)$ is a pair $(u,\theta)$, where $u : A \to B$ a function and $\theta : X \to Y$ a morphism in $\mc C$, such that the following diagram commutes,
  \[
  \begin{tikzcd}
    A \ar[d, "u"'] \ar[r, "f"] & \Gamma X \ar[d, "{\Gamma \theta}"] \\
    B \ar[r, "g"'] & \Gamma Y
  \end{tikzcd}
  \]
  \end{itemize}
\end{definition}

There is an evident projection functor $p : \pf{\mc C} \to \mc C$, sending $(A,X,f)$ to $X$ and $(u,\theta)$ to $\theta$. Similarly, there is another projection $q : \pf{\mc C} \to \Set$. The following result is well-known; cf.~\citet{Moerdijk1983}:

\begin{proposition}\label{prop:freydcover}
  Given a pr-coherent category $\mc C$, its Freyd cover $\pf{\mc C}$ is also a pr-coherent category, and the projections $p,q$ are both pr-coherent functor.
\end{proposition}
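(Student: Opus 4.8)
The plan is to verify that the Freyd cover $\pf{\mc C} = \Set \cv \Gamma$ inherits each piece of pr-coherent structure from the two factors $\Set$ and $\mc C$, and that $p,q$ preserve it, by checking the structures \emph{levelwise} and using that $\Gamma = \mc C(1,-)$ preserves limits. The guiding principle is that comma categories over a limit-preserving functor are very well-behaved: most limits and the entire coherent structure can be computed componentwise in $\Set$ and $\mc C$, with the arrow component $f : A \to \Gamma X$ glued along $\Gamma$ applied to the $\mc C$-component.

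First I would establish finite limits. The terminal object is $(1,1,\id)$, using $\Gamma 1 \cong 1$. For pullbacks of $(u,\theta) : (A,X,f) \to (C,Z,h)$ and $(v,\eta) : (B,Y,g) \to (C,Z,h)$, I would take the pullback $A \times_C B$ in $\Set$, the pullback $X \times_Z Y$ in $\mc C$, and the induced comparison map into $\Gamma(X \times_Z Y) \cong \Gamma X \times_{\Gamma Z} \Gamma Y$; here the isomorphism is exactly where $\Gamma$ preserving pullbacks is used. One then checks the universal property componentwise. This simultaneously shows $p$ and $q$ preserve finite limits, since each projection simply reads off one component.

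Next I would treat the coherent structure, i.e. regular epi--mono factorisation and finite unions of subobjects that are stable under pullback. The key observation is that a subobject of $(A,X,f)$ corresponds to a subobject $A' \subseteq A$ in $\Set$ together with a subobject $X' \hookrightarrow X$ in $\mc C$ such that $f$ restricts to a map $A' \to \Gamma X'$; images, intersections, and finite joins are then computed in each component separately (joins and images in $\mc C$ combined with the corresponding operations in $\Set$, reconciled along $\Gamma$). Since $\Gamma$ is a right adjoint it preserves monomorphisms, which guarantees that the $\mc C$-component of a subobject pulls back correctly, so the lattice operations descend to $\pf{\mc C}$ and remain stable under pullback. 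Again $p$ and $q$ preserve these by construction.

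Finally, for the PNO I would set $N_{\pf{\mc C}} := (\N, N, \iota)$, where $\N$ is the standard natural number set, $N$ is the PNO of $\mc C$, and $\iota : \N \to \Gamma N$ sends the external numeral $n$ to the global element $\ov n = \ms s^n \ms 0 : 1 \to N$; the zero and successor arrows are $(\ms 0,\ms 0)$ and $(\ms s,\ms s)$ componentwise, with compatibility of $\iota$ with successor being immediate. The recursion map $\rec_{(u,\theta),(v,\eta)}$ is defined by taking $\Set$-recursion in the first component and $\mc C$-recursion $\rec_{u,v}$ in the second; the commutativity constraint $f \circ (\text{recursor}) = \Gamma(\rec) \circ (\dots)$ follows because both components satisfy the same recursive clauses over $\N$, so they agree by external induction on the numeral. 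Uniqueness in $\pf{\mc C}$ reduces to uniqueness in each factor. I expect \textbf{this PNO step to be the main obstacle}, since it is the only part not purely formal: I must verify that the glued map $\iota$ is genuinely compatible with the recursor in the sense demanded by the comma category, which amounts to checking that the set-level recursion tracks the internal recursion of $\mc C$ under $\Gamma$ levelwise at each external numeral. Once this compatibility is confirmed, $N_{\pf{\mc C}}$ satisfies the PNO universal property, and since $p$ sends it to $N$ and $q$ sends it to $\N$ while preserving $\ms 0,\ms s$ on the nose, both projections are pr-coherent, completing the proof.
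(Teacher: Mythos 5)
The paper gives no proof of this proposition, citing it as well-known from Moerdijk's work on the Freyd cover, and your argument is precisely the standard one that reference contains: compute limits, images, joins, and the PNO componentwise in $\Set$ and $\mc C$, reconciled along the limit-preserving functor $\Gamma$. Your sketch is correct, and you rightly isolate the only non-formal step, namely that the glued object $(\N,N,\iota)$ with $\iota(n)=\ov n$ satisfies the recursor compatibility, which your external induction on numerals handles.
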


Now let us take the initial pr-coherent category $\mc C[\T]$. By initiality, there will also be a unique $R : \mc C[\T] \to \pf{\mc C[\T]}$ making the following diagram commute,
\[
\begin{tikzcd}
  & \pf{\mc C[\T]} \ar[d, "p"] \\
  \mc C[\T] \ar[ur, "R"] \ar[r, equal] & \mc C[\T]
\end{tikzcd}
\]
This implies that $RX = (TX,X,\alpha_X : TX \to \Gamma X)$, where $T : \mc C[\T] \to \Set$ is the composition $q \circ R$. The family of maps $\alpha$ actually consists of a natural transformation $\alpha : T \nt \Gamma$, since for any map $\theta : X \to Y$, $R\theta$ will be a map in $\pf{\mc C[\T]}$, and this means the following diagram must commute,
\[
\begin{tikzcd}
  TX \ar[d, "{T\theta}"'] \ar[r, "{\alpha_X}"] & \Gamma X \ar[d, "{\Gamma\theta}"] \\
  TY \ar[r, "{\alpha_Y}"'] & \Gamma Y
\end{tikzcd}
\]

Notice that $T$ is the composition of two pr-coherent functors, thus itself must be pr-coherent. This implies that it is indeed the unique functor from $\mc C[\T]$ to $\Set$, hence sends each $\varphi(\ov x)$ to its canonical interpretation $\N[\varphi(\ov x)]$. The existence of such a natural transformation $\alpha$ already implies the following result:

\begin{theorem}[Truth and provability coincide]\label{thm:trpreq}
  For any sentence $\varphi$ in $\T$, it is provable iff it is true.\footnote{Here \emph{truth} as usual refers to the validity in the standard model.} 
\end{theorem}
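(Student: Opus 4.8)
The plan is to prove the two directions separately, treating a sentence $\varphi$ as the subobject $\varphi \hookrightarrow 1$ of the terminal object $1 = \top$ of $\mc C[\T]$ that it names. Under this identification, $\varphi$ being \emph{provable} (i.e. $\top \vdash \varphi$ in $\T$) means exactly that the mono $\varphi \hookrightarrow 1$ is an isomorphism, $\varphi \cong 1$; and since $T = q \circ R$ is the canonical interpretation $\N[-]$ and preserves the terminal object, $\varphi$ being \emph{true} means exactly that $T\varphi = \N[\varphi]$ equals the inhabited subset $\{*\} = T1$. The soundness direction is then immediate: $T$ is a functor preserving finite limits, so $\varphi \cong 1$ forces $T\varphi \cong T1 = \{*\}$, whence provable implies true. (This is just the validity of coherent logic in the standard model.)

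For the converse I would invoke the Freyd cover. By Proposition~\ref{prop:freydcover} and the initiality of $\mc C[\T]$, the essentially unique pr-coherent functor $R$ with $p R = \id$ satisfies $R\varphi = (T\varphi,\varphi,\alpha_\varphi)$, with component $\alpha_\varphi : T\varphi \to \Gamma\varphi = \mc C[\T](1,\varphi)$. Assume $\varphi$ is true, so $T\varphi = \{*\}$ is inhabited; applying $\alpha_\varphi$ to its element produces an honest global section $g := \alpha_\varphi(*) : 1 \to \varphi$. Because $\varphi$ is a subobject of $1$, composing $g$ with the inclusion $\iota : \varphi \hookrightarrow 1$ gives the unique map $1 \to 1$, namely $\id_1$, so $g$ is a section of $\iota$; a mono into a terminal object that admits a section is an isomorphism, hence $\varphi \cong 1$ and $\T \vdash \varphi$.

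The hard part is conceptual rather than computational: it lies in having the natural transformation $\alpha : T \nt \Gamma$ at all, for it is $\alpha$ that converts a \emph{semantic} witness of $\varphi$ into a \emph{syntactic}, $\T$-provable global element, and its existence is precisely what the pr-coherence of $\pf{\mc C[\T]}$ (Proposition~\ref{prop:freydcover}) together with the initiality of $\mc C[\T]$ (Theorem~\ref{thm:ctinitial}) delivers through the section $R$ of $p$. Once $\alpha_\varphi$ is available, the collapse of a global section of a subobject of $1$ down to provability is purely formal. I would finally note that the same computation, run over subobjects of more general objects and exploiting the naturality of $\alpha$, is what will subsequently yield the disjunction and existence properties of $\T$.
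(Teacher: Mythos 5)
Your proposal is correct and follows essentially the same route as the paper: the forward direction is soundness of the standard interpretation, and the converse uses the initiality-induced section $R$ of the Freyd cover projection to obtain $\alpha_\varphi : T\varphi \to \Gamma\varphi$, which converts the semantic witness into a global section $1 \to \varphi$ and hence a proof. Your extra remark that a sectioned mono into $1$ is an isomorphism just makes explicit the paper's appeal to the definition of $\mc C[\T]$.
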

\begin{proof}
  If $\varphi$ is true, it follows that $T\varphi = \N[\varphi] = 1$ is the singleton set. Hence, the natural transformation $\alpha_{\varphi}$ gives us some element in $\Gamma\varphi$. By the definition of $\mc C[\T]$, for a sentence $\varphi$, there is a morphism from $\top$ to $\varphi$ iff $\top \vdash \varphi$ is provable in $\T$.
\end{proof}

\begin{remark}
  The fact that any true $\Sigma_1$-sentence is also provable in $I\Sigma_1$ is usually referred to as the \emph{$\Sigma_1$-completeness} of $I\Sigma_1$. This is also a classical result in proof theory, but proven usually by induction on the complexity of formulas. Our proof relies on the natural transformation $\alpha : T \nt \Gamma$, whose existence is guaranteed by the pure structural reason of $\mc C[\T]$ being the \emph{initial} pr-coherent category. It serves as certain \emph{algorithm} that extracts information from the \emph{truth} of a sentence, converting it to a \emph{proof} of that sentence in $\T$. For the reader familiar with type theory, this is indeed the incarnation of \emph{logical relations} in Tait computability methods, adapted to the context of arithmetic. As mentioned at the beginning of this section, Tait computability are widely used in type theory, and recently there has been tremendous success in applying its categorical counterpart, viz. Artin glueing, to the study of complex systems of type theories; e.g.~\citet{sterling2021first}. We hope to at least show the possibility of applying similar methods in the context of proof theory of arithmetic.
\end{remark}

\begin{remark}\label{rem:incom}
  Notice that, although \Cref{thm:trpreq} implies that there are no true but unprovable sentence in $\T$, there could still be \emph{false but irrefutable} sentences in $\T$. In particular, G\"odel's first and second incompleteness theorems still applies to $\T$ in the following sense: There exists a sentence $\varphi$ in $\T$ that is neither provable nor refutable, i.e. both $\top \vdash \varphi$ and $\varphi \vdash \bot$ are \emph{un}provable in $\T$, and the sentence $\mathrm{Incon}_{\T}$, expressing the \emph{in}consistency of $\T$, will be such an example.\footnote{Since there is no negation in $\T$, the consistency of $\T$ as \emph{not} existing a proof of $\bot$ isn't directly formalisable in $\T$, but we can use the \emph{sequent} $\mathrm{Incon}_{\T} \vdash \bot$ to represent the consistency of $\T$.} We will say more about this in \Cref{sec:future}.
\end{remark}

The above theorem has lots of consequences. Firstly, $\T$ proves all the true equality between closed terms. It also implies that $\T$ has the disjunction and existence properties:

\begin{corollary}[Disjunction property of $\T$]\label{cor:disjct}
  For any two sentences $\varphi,\psi$, if $\T$ proves their disjunction,
  \[ \top \vdash \varphi \vee \psi, \]
  then either $\T$ proves $\top \vdash \varphi$, or $\T$ proves $\top \vdash \psi$.
\end{corollary}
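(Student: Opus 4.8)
The plan is to derive the disjunctive property directly from Theorem~\Ref{thm:trpreq}, which equates $\T$-provability of a sentence with its truth in the standard model $\N$. The key observation is that truth in $\N$ is a \emph{classical} semantic notion, so it commutes with disjunction in the familiar way: a disjunction $\varphi \vee \psi$ is true in $\N$ if and only if at least one of $\varphi,\psi$ is true in $\N$.

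First I would suppose that $\T$ proves $\top \vdash \varphi \vee \psi$. By the forward direction of Theorem~\Ref{thm:trpreq}, this means the sentence $\varphi \vee \psi$ is true in the standard model, i.e. $\N \models \varphi \vee \psi$. Next I would invoke the classical semantics of disjunction to conclude that either $\N \models \varphi$ or $\N \models \psi$. Finally, in whichever case holds, I would apply the converse direction of Theorem~\Ref{thm:trpreq} to that true sentence to obtain a $\T$-proof of $\top \vdash \varphi$ or of $\top \vdash \psi$ respectively. This closes the argument.

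I do not expect any serious obstacle here, since all the substantive work has already been done in establishing Theorem~\Ref{thm:trpreq} via the Freyd cover and the natural transformation $\alpha : T \nt \Gamma$. The only point deserving a word of care is that the biconditional in Theorem~\Ref{thm:trpreq} must be applied in both directions and that \emph{truth} refers to validity in $\N$; since the semantic clause for $\vee$ in a classical structure is exactly the disjunction we want, the step is immediate. The same strategy, applied to the existential quantifier together with its classical satisfaction clause, would yield the existential property as well.

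\begin{proof}
  Suppose $\T$ proves $\top \vdash \varphi \vee \psi$. By Theorem~\Ref{thm:trpreq}, the sentence $\varphi \vee \psi$ is true in the standard model, so $\N \models \varphi \vee \psi$. By the classical semantics of disjunction, this means $\N \models \varphi$ or $\N \models \psi$. In either case, applying Theorem~\Ref{thm:trpreq} once more to the true sentence, we conclude that $\T$ proves $\top \vdash \varphi$, or $\T$ proves $\top \vdash \psi$, as desired.
\end{proof}
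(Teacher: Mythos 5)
Your proof is correct and follows exactly the paper's own argument: use Theorem~\Ref{thm:trpreq} to pass from provability of $\varphi \vee \psi$ to its truth in $\N$, observe that classically one disjunct must be true, and apply the theorem again to recover provability of that disjunct. No differences worth noting.
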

\begin{proof}
  $\T$ proves $\varphi \vee \psi$ implies either $\varphi$ or $\psi$ is true, hence at least one of them is provable in $\T$.
\end{proof}

\begin{corollary}[Existence property of $\T$]\label{cor:cant}
  For any formula $\varphi(x)$ in $\T$, if $\T$ proves its existence,
  \[ \top \vdash \exists x\varphi(x), \]
  then there exists some $n\in\N$ that
  \[ \top \vdash \varphi(\ov n). \]
\end{corollary}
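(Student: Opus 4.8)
The plan is to reduce the existence property to the truth--provability equivalence of Theorem~\ref{thm:trpreq}, in direct parallel with the proof of the disjunctive property (Corollary~\ref{cor:disjct}). First I would use the hypothesis that $\T$ proves $\top \vdash \exists x\varphi(x)$: by Theorem~\ref{thm:trpreq}, the sentence $\exists x\varphi(x)$ is then \emph{true} in the standard model $\N$. Unwinding the Tarskian semantics of $\exists$, this yields a concrete natural number $n \in \N$ such that $\N \models \varphi(\ov n)$, where $\ov n$ is the corresponding numeral.

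Next I would observe that $\varphi(\ov n)$ is itself a sentence, since its only free variable $x$ has been substituted by the closed term $\ov n$. Hence Theorem~\ref{thm:trpreq} applies a second time: the truth of $\varphi(\ov n)$ in $\N$ forces $\T$ to prove $\top \vdash \varphi(\ov n)$, which is exactly the desired conclusion. Thus the whole argument is just two applications of the truth--provability equivalence, bridged by reading off an explicit witness from the standard model.

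The only step needing a little care---and the one I would flag as the main (if minor) subtlety---is justifying the passage from truth of $\exists x\varphi(x)$ to an \emph{explicit} numeral witness. This rests on the interpretation functor $T = q \circ R : \mc C[\T] \to \Set$ computing the canonical interpretation $\N[-]$ faithfully: being a composite of pr-coherent functors it is \emph{the} pr-coherent functor into $\Set$, so it sends each formula $\psi(\ov x)$ to $\N[\psi(\ov x)]$ and interprets existential quantification via image factorisation, giving an inhabited set precisely when some standard witness exists. Granting this, the truth of $\exists x\varphi(x)$ in $\N$ genuinely produces a specific $n$, and no argument beyond the two invocations of Theorem~\ref{thm:trpreq} above is required.
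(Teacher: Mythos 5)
Your proposal is correct and matches the paper's proof: both simply apply Theorem~\ref{thm:trpreq} twice, first to pass from provability of $\exists x\varphi(x)$ to its truth in $\N$ (extracting a witness $n$), and then to pass from the truth of $\varphi(\ov n)$ back to its provability. The extra remarks about the functor $T$ are fine but not needed beyond what the paper already records.
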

\begin{proof}
  Again, if $\T$ proves $\exists x\varphi(x)$, then $\varphi(\ov n)$ is true for some $n\in\N$, hence $\varphi(\ov n)$ will be provable for such $n$.
\end{proof}

\section{Conclusion and future directions}\label{sec:future}

From a semantic perspective, we have carefully studied the internal structure of coherent categories equipped with a PNO, and shown that they support induction rules and construction of bounded universal quantifications of $\Sigma_1$-subobjects. We have also classified the definable functions in the initial pr-coherent category by constructing a pr-coherent category $\PriM$ of primitive recursive functions between recursively enumerable sets.

From a syntactic perspective, we have constructed a coherent theory of arithmetic $\T$, and shown its syntactic category is the initial pr-coherent category. As an application, we have provided a structural proof of the classification of strongly $\Sigma_1$-representable functions in $I\Sigma_1$, which is a classical result in the historical development of proof theory. Other constructive properties of the $\Sigma_1$-fragment of $I\Sigma_1$ also follows from this initiality statement, by using the glueing argument.

At the end of this paper, we also discuss some future directions and further questions naturally arise in this paper:

\subsection{Other arithmetic theories}

As we have mentioned in the introduction, most of the categorical analysis of computability in the literature works in at least Cartesian closed categories where higher types exist. However, many traditional theorems in proof theory cannot be derived in such a framework, because the syntactic categories of these arithmetic theories will not be Cartesian closed. We believe putting more efforts in investigating natural numbers object in weaker categorical context will benefit both categorical recursion theory and traditional proof theory. For instance, could the result in~\citet{buss1986bounded} on the correspondence between bounded arithmetic and polynomial time computable functions be recovered in a structural context? We leave this for future work.

\subsection{Incompleteness theorems}

We have slightly touched upon the incompleteness theorems w.r.t. our coherent theory $\T$ in \Cref{rem:incom}. Since $\T$ lacks negation, the usual construction of a self-referential sentence stating ``I am not provable'' will not be available in $\T$ to show its incompleteness. In a future work, we plan to develop a general framework based on categorical logic to treat the two incompleteness theorems for coherent theories in general, where we may lack negation, implication, and universal quantifier. This will in particular implies that the coherent theory of arithmetic $\T$ defined in this paper is incomplete, and it \emph{cannot refute its own inconsistency}.\footnote{Again, the second incompleteness theorem is interpreted in this way because $\T$ lacks negation, and cannot directly formalise its own consistency.}

\subsection{Comparison with arithmetic universes}\label{subsec:arithuni}

In fact, a categorical approach to incompleteness was proposed and developed by Andr\'e Joyal in the 1970s through a series of unpublished notes and lectures; cf. the much later abstract~\citep{joyal2005godel}. This work is based on a notion of \emph{arithmetic universe}, which are pretopoi with parametrised lists objects. Further developments along this line include~\citet{maietti2003joyal,maietti2010joyal,RN543}, and more recently Joyal's original work has been fully written out in~\citet{van2020g}. 

The categorical framework of arithmetic universe is similar to the pr-coherent categories considered in this paper, where they are both categories with enough structures to interpret coherent arithmetic, but lack higher function types. A priori, our assumption is weaker than an arithmetic universe: We work with coherent categories instead of pretopoi, and we only ask for a PNO, or equivalently a parametrised list object over the terminal object, instead of all objects.

However, it has been suggested to us by an anonymous referee that the initial pr-coherent category constructed in this paper is possible to have a close connection with the initial arithmetic universe constructed in~\citet{van2020g}. In fact, our category $\mc C[\T]$ does have coproducts, since we can remap two formulas $\varphi,\psi$ for them to only consist of even and odd numbers, respectively. With care, one can also see from the encoding of finite lists given in \Cref{sec:init} that $\mc C[\T]$ will also have parametrised list object. Hence, $\mc C[\T]$ will be a positive coherent category with parametrised list objects, and the only structure of an arithmetic universe possibly missing in $\mc C[\T]$ are effective quotients of equivalence relations. $\mc C[\T]$ indeed have some quotients of equivalence relations: If $R(x,y)$ is a \emph{complemented} equivalence relation on $\varphi(x)$, with complement $\qsi R(x,y)$, then the quotient $\varphi/R$ can be evidently constructed as the following formula,
\[ \varphi/R(x) :\equiv \varphi(x) \wedge \buq yx \qsi R(y,x). \]
However, we fail to see how to construct general quotients. Thus, it would be interesting to investigate whether $\mc C[\T]$ is a pretopos, and if not, whether the effective completion of $\mc C[\T]$ (cf.~\citet[A3.3.10]{johnstone2002sketches}) coincides with the initial arithmetic universe.

\subsection{Categorical logic and arithmetic}

Finally, we want to emphasise the perspective of categorical logic. One of the important message from categorical logic is that there is almost an \emph{equivalence} between theories in some fragment of logic with certain kinds of categories; cf.~\citet[D1.4]{johnstone2002sketches}. This perspective on viewing theories as categories allows one to state and prove the initiality result for $\T$, and furthermore to provide a categorical analysis of the proof-theoretic properties of $\T$. For a proof-theorist, category theory in this paper may be viewed as a language that organises different pieces of arguments in proof theory of arithmetic into a structured narrative. However, we expect much more applications of categorical logic, and topos theory in particular, to the logical study of arithmetic.

\section*{Acknowledgement}

We want to thank Lev Beklemishev for reading a first draft of this paper, and for presenting insightful questions and useful suggestions to us. We would like to thank Simon Henry for pointing out an error in an early version of this paper. We would also like to thank a first anonymous referee of this paper for pointing to us important references that were omitted. We also thank a second anonymous referee, which points us to more relevant references, and whose comments directly leads to the addition of \Cref{rem:tiscoherent} filling a significant gap of the paper, and of the discussions in \Cref{subsec:arithuni}.

\bibliographystyle{elsarticle-harv} 
\bibliography{mybib}


\appendix

\section{Complete Proof of \Cref{prop:ctprcoh}}\label{app}

We first show that $\rec_{\gamma,\theta}$ respects the domain and codomain,
\[ \rec_{\gamma,\theta}(x,n,y) \vdash_{x,n,y} \varphi(x) \wedge \psi(y). \]
\begin{itemize}
\item If $n = \ms 0$, then $\rec_{\gamma,\theta}(x,n,y)$ implies $\gamma(x,y)$, which implies $\varphi(x) \wedge \psi(y)$.
\item If $\ms 0 < n$, then $n = \ms sm$. This way, $\rec_{\gamma,\theta}(x,n,y)$ will imply $\exists l(\gamma(x,(l)_{\ms 0}))$, which implies $\varphi(x)$. It also implies $\exists l(\buq un \theta((l)_u,(l)_{\ms su}) \wedge (l)_n = y)$, and this implies $\exists l(\theta((l)_m,(l)_n) \wedge (l)_n = y)$, hence implies $\psi(y)$.
\end{itemize}

The uniqueness of value of $\rec_{\gamma,\theta}$ is the following sequent,
\[ \rec_{\gamma,\theta}(x,n,y) \wedge \rec_{\gamma,\theta}(x,n,z) \vdash_{x,n,y,z} y = z. \]
It is easy to show by a case distinction:
\begin{itemize}
\item When $n$ is $\ms 0$, $\rec_{\gamma,\theta}(x,\ms 0,y) \vdash_{x,y} \gamma(x,y)$, similarly $\rec_{\gamma,\theta}(x,\ms 0,z) \vdash_{x,z} \gamma(x,z)$. Then $y = z$ follows from uniqueness of $\gamma$.
\item When $\ms 0<n$, we may then assume that we have $l,k$ that encodes $\ms sn$-steps of computation,
  \begin{align*}
    &\lh(l) = \ms sn \wedge \gamma(x,(l)_{\ms 0}) \wedge \buq un\theta((l)_u,(l)_{\ms su}) \wedge (l)_n=y, \\
    &\lh(k) = \ms sn \wedge \gamma(x,(k)_{\ms 0}) \wedge \buq un\theta((k)_u,(k)_{\ms su}) \wedge (k)_n=z.
  \end{align*}
  We need to show that $\buq un((l)_u = (k)_u)$, but this can be done by an easy induction on $u$: For the base case $u = \ms 0$, $(l)_{\ms 0} = (k)_{\ms 0}$ again by uniqueness of $\gamma$. For inductive case, if $u < n$, $(l)_{\ms su} = (k)_{\ms su}$ follows from the uniqueness of $\theta$ plus the induction hypothesis $(l)_u = (k)_u$. It follows that $(l)_n = (k)_n$, because there exists some $m$ that $n = \ms sm$ and we have $(l)_m = (k)_m$, and $\theta((l)_m,(l)_n)$ and $\theta((k)_m,(k)_n)$.
\end{itemize}

For the existence of value of $\rec_{\gamma,\theta}$, we need to show
\[ \varphi(x) \vdash_{x,n} \exists y\rec_{\gamma,\theta}(x,n,y), \]
and it can be easily proved by induction on $n$:
\begin{itemize}
\item Base case: For $n = \ms 0$, this follows from the existence of value $\gamma$, and the primitive recursive function of converting any number $z$ to (the code of) a list $\pair{z}$ of length one containing $z$.
\item Inductive case: We need to prove the following sequent,
  \[ \varphi(x) \wedge \exists y\rec_{\gamma,\theta}(x,n,y) \vdash_{x,n} \exists z\rec_{\gamma,\theta}(x,\ms sn,z). \]
  Suppose now we have
  \[ \lh(l) = \ms sn \wedge \gamma(x,(l)_{\ms 0}) \wedge \buq un\theta((l)_u,(l)_{\ms su}) \wedge (l)_n=y. \]
  We may construct $z$ as the unique value of $\theta(y,z)$, and construct $k$ as $l \star \pair{z}$, where $\star$ is a primitive recursive term denoting the concatenation of sequences. We should then be able to verify
  \[ \lh(k) = \ms s\ms sn \wedge \gamma(x,(k)_{\ms 0}) \wedge \buq u{\ms sn}\theta((k)_u,(k)_{\ms su}) \wedge (l)_n=z, \]
  hence concluding $\exists z\rec_{\gamma,\theta}(x,\ms sn,z)$.
\end{itemize}

We prove the commutativity of the following diagram,
\[
\begin{tikzcd}
  \varphi(x) \ar[dr, "{\gamma}"'] \ar[r, "{\pair{\id,\ms 0}}"] & \varphi(x) \times N \ar[d, "{\rec_{\gamma,\theta}}"] & \varphi(x) \times N \ar[l, "{\id \times \ms s}"'] \ar[d, "{\rec_{\gamma,\theta}}"'] \\
  & \psi(y) & \psi(y) \ar[l]^{\theta}
\end{tikzcd}
\]
The commutativity of the triangle amounts to saying that the following sequent is provable,
\[ \gamma(x,y) \dashv\vdash_{x,y} \rec_{\gamma,\theta}(x,\ms 0,y), \]
which should be immediate from the definition of $\rec_{\gamma,\theta}$. The commutativity of the square amounts to saying that
\[ \rec_{\gamma,\theta}(x,\ms sn,y) \dashv\vdash_{x,n,y} \exists z(\rec_{\gamma,\theta}(x,n,z) \wedge \theta(z,y)). \]
From left to right, if there is an $l$ encoding a computation up to $\ms sn$, then we can extract the value of $z$ as $(l)_n$, and obtain another list $\ms{droplast}(l)$ dropping the last entry of $l$, and these should witness $\rec_{\gamma,\theta}(x,n,z)$ and $\theta(z,y)$. The function $\ms{droplast}$ is again a primitive recursive term in $\T$. From right to left, we do the reverse process. Given $\rec_{\gamma,\theta}(x,n,z)$ with a witnessing list $l$, and $\theta(z,y)$, we construct $l \star \pair{y}$, and this should verify $\rec_{\gamma,\theta}(x,\ms sn,y)$.

Finally, we prove the morphism $\rec_{\gamma,\theta}$ is unique. Suppose we have another morphism $\sigma(x,n,y)$ from $\varphi(x) \times N$ to $\psi(y)$ making the above diagram commute. We need to show that
\[ \sigma(x,n,y) \dashv\vdash_{x,n,y} \rec_{\gamma,\theta}(x,n,y). \]
From left to right, it relies on the fact that for definable functions in $\T$, we can construct a list of its values up to an arbitrary number,
\[ \varphi(x) \vdash_{x,n} \exists l(\lh(l) = \ms sn \wedge \buq u{\ms sn}\sigma(x,u,(l)_u)). \]
We may then verify that the same list can be used to construct values for the morphism $\rec_{\gamma,\theta}$,
\begin{align*}
  &\lh(l) = \ms sn \wedge \buq u{\ms sn}\sigma(x,u,(l)_u) \\
  \vdash_{x,l,n}\ &\lh(l) = \ms sn \wedge \gamma(x,(l)_{\ms 0}) \wedge \buq un\theta((l)_u,(l)_{\ms su}).
\end{align*}
Essentially, we need to show that
\[ \buq u{\ms sn}\sigma(x,u,(l)_u) \vdash_{x,l,n} \gamma(x,(l)_{\ms 0}), \]
and that
\[ \buq u{\ms sn}\sigma(x,u,(l)_u) \wedge i < n \vdash_{x,l,n,i} \theta((l)_i,(l)_{\ms si}). \]
These two properties should then be immediate from the fact that $\sigma$ makes the above diagram commute.

The right to left direction is completely similar. The trick it to prove the following stronger result by induction on $i$,
\[ \lh(l) = \ms sn \wedge \gamma(x,(l)_{\ms 0}) \wedge \buq un\theta((l)_u,(l)_{\ms su}) \vdash_{x,l,n,y,i} n < i \vee \sigma(x,i,(l)_i). \]
\begin{itemize}
\item Base case: When $i = \ms 0$, $\gamma(x,(l)_{\ms 0})$ implies $\sigma(x,\ms 0,(l)_{\ms 0})$.
\item Inductive case: When $i = \ms sj$, induction hypothesis gives us $\sigma(x,j,(l)_j)$, then $\theta((l)_j,(l)_i)$ implies $\sigma(x,i,(l)_i)$. 
\end{itemize}
The above sequent in particular implies $\sigma(x,n,(l)_n)$, thus we would have
\[ \rec_{\gamma,\theta}(x,n,y) \vdash_{x,n,y} \sigma(x,n,y). \]
This completes the whole proof.

\end{document}